\theoremstyle{plain}
\newtheorem{theorem}{Theorem}[section]
\newtheorem{lemma}[theorem]{Lemma}
\newtheorem{proposition}[theorem]{Proposition}
\newtheorem{question}[theorem]{Question}
\theoremstyle{definition}
\newtheorem{definition}[theorem]{Definition}
\newtheorem{remark}[theorem]{Remark}
\numberwithin{equation}{section}
\numberwithin{table}{section}
\newcommand{\inc}[2]{{#1}^{#2, \uparrow}}
\newcommand{\bt}{\mathbf{t}}
\newcommand{\bx}{\mathbf{x}}
\newcommand{\by}{\mathbf{y}}
\renewcommand{\geq}{\geqslant}
\renewcommand{\leq}{\leqslant}
\DeclareMathOperator{\Id}{Id}
\DeclareMathOperator{\TN}{TN}
\DeclareMathOperator{\TP}{TP}
\DeclareMathOperator{\STN}{SymTN}
\DeclareMathOperator{\STP}{SymTP}
\begin{document}

\title{Multivariate transforms of total positivity}

\author{Sujit Sakharam Damase}
\address[S.S.~Damase]{Department of Mathematics, Indian Institute of
Science, Bangalore 560012, India; PhD student}
\email{\tt sujits@iisc.ac.in}

\author{Apoorva Khare}
\address[A.~Khare]{Department of Mathematics, Indian Institute of
Science, Bangalore, India and Analysis \& Probability Research Group,
Bangalore, India}
\email{\tt khare@iisc.ac.in}

\date{\today}

\keywords{totally nonnegative kernel, totally positive kernel,
totally nonnegative matrix, totally positive matrix, entrywise map,
multivariate transform, regular P\'olya frequency function}

\subjclass[2010]{15B48 (primary); %
15A15, 
30C40, 
39B62, 
47B34 (secondary)
}

\begin{abstract}
Belton--Guillot--Khare--Putinar [\textit{J.\ d'Analyse Math.}\ 2023] classified the post-composition operators that preserve TP/TN kernels of each specified order. We explain how to extend this from preservers to transforms, and from one to several variables. Namely, given arbitrary nonempty totally ordered sets $X,Y$, we characterize the transforms that send each tuple of kernels on $X \times Y$ that are TP/TN of orders $k_1, \dots, k_p$, to a TP/TN kernel of order $l$, for arbitrary positive integers (or infinite) $k_j$ and $l$. An interesting feature is that to preserve TP (or TN) of order $2$, the preservers are products of individual power (or Heaviside) functions in each variable; but for all higher orders, the preservers are powers in a single variable. We also classify the multivariate transforms of symmetric TP/TN kernels; in this case it is the preservers of TP/TN of order 3 that are multivariate products of power functions, and of order 4 that are individual powers. The proofs use generalized Vandermonde kernels, Hankel kernels, (strictly totally positive) P\'olya frequency functions, and a kernel studied recently but tracing back to works of Schoenberg [\textit{Ann.\ of Math.}\ 1955] and Karlin [\textit{Trans.\ Amer.\ Math.\ Soc.}\ 1964].
\end{abstract}

\maketitle

\section{Introduction and main results}

Given totally ordered sets $X$ and $Y$, we say that a kernel $K : X \times Y \to (0,\infty)$ is \textit{totally positive} if for any choice of points $x_1 < \cdots < x_n$ in $X$ and $y_1 < \cdots < y_n$ in $Y$, the matrix $(K(x_i, y_j))_{i,j=1}^n$ has positive determinant. These matrices and kernels (and their non-strict ``totally nonnegative'' counterparts) have featured in many different areas of mathematics, including
in analysis (morally going back to Laguerre) and differential equations \cite{AESW,FP, Loewner55, IJS-PF},
classical works on particle systems \cite{GK37,GK50},
matrix analysis \cite{Schoenberg55,Whitney},
probability and statistics \cite{Efron, KarlinTAMS, KP},
a comprehensive monograph \cite{SK-TP} and more recent volumes \cite{FJ,TP,Pinkus},
approximation theory \cite{SW53},
Gabor analysis \cite{GRS},
integrable systems \cite{KW},
representation theory \cite{Lusztig, Rietsch},
cluster algebras \cite{BZ,FZ},
the totally non-negative Grassmannian \cite{GKL,Postnikov},
and combinatorics \cite{Br} -- to cite but a few influential works in numerous subfields.

A prominent problem in the literature has been that of understanding the preservers of various structures, including forms of positivity. For instance, perhaps the earliest results on preservers go back to Frobenius~\cite{Frob}, who characterized in 1897 the linear maps on matrix algebras which were determinant-preserving. Here we are interested in the theme of positivity preservers, which involves preserving the weaker notion of positivity of (determinants of) various submatrices of kernels. In this vein, in 1925 P\'olya and Szeg\H{o}~\cite{polya-szego} used the Schur product theorem \cite{Schur1911} to show that convergent power series with nonnegative coefficients, when applied entrywise to positive semidefinite matrices of all sizes -- or positive kernels on an infinite domain -- preserve positive semidefiniteness. The converse to this result -- that there exist no other such preservers -- was shown by Schoenberg~\cite{Schoenberg42} and Rudin \cite{Rudin59}, and later by a host of other mathematicians in various settings. Schoenberg's motivations in the above work came from metric geometry and from understanding positive definite functions on various homogeneous spaces. These are \textit{transforms} that send distance matrices to (conditionally) positive semidefinite matrices. We refer the reader to e.g.\ the twin surveys \cite{BGKP-survey1,BGKP-survey2} or the monograph \cite{AK-book} for more details.

The brief literature survey above describes functions of one variable which preserve or transform various structures related to positivity. There are also \textit{multivariate} analogues of the above classification results, of which we list a selection here.
\begin{itemize}
\item Norvidas \cite{N85} classified the post-composition preservers which take a tuple of continuous positive definite functions on certain LCA groups to a continuous positive definite function.
\item FitzGerald--Micchelli--Pinkus \cite{fitzgerald} classified the multivariate entrywise preservers and transforms taking tuples of (conditionally) positive matrices to (conditionally) positive matrices.
\item Belton--Guillot--Khare--Putinar \cite{BGKP-hankel} strengthened the preceding results for multivariate positivity preservers.
\item Recently, Belton et al extended this ``multivariate Schoenberg theorem'' to accommodate matrices with negative eigenvalues. Namely, they classified in \cite{BGKP-inertia} all multivariate entrywise maps $F : \mathbb{R}^p \to \mathbb{R}$ such that if $A_1, \dots, A_p$ are real symmetric $n \times n$ matrices (for any $n \geq 1$) with at most $k_1, \dots, k_p$ negative eigenvalues, then $F[ A_1, \dots, A_p]$ has at most $l$ negative eigenvalues -- for arbitrary (but fixed) integers $k_1, \dots, k_p, l \geq 0$.
\end{itemize}

The present work is at the intersection of these two evergreen themes: total positivity and (multivariate) preservers. More strongly, we seek to characterize the post-composition transforms of total positivity and of total nonnegativity -- of varying orders $k_1, \dots, k_p, l$ as above. In this regard, the one-variable case -- more precisely, classifying the preservers -- was resolved by Belton--Guillot--Khare--Putinar in  2023 \cite{BGKP-TN} (answering a ``univariate'' question by P.~Deift to A.K.). The present work provides the ``multivariate'' answers, and is thus a sequel to \cite{BGKP-TN}; additionally, it extends the results in \cite{BGKP-TN} from preservers to transforms. We begin by setting notation, and then describe our main results.

\begin{definition}
Fix nonempty totally ordered sets $X, Y$ and an integer $k \geq 1$.
\begin{enumerate}
\item Denote by $\inc{X}{k}$ all $k$-tuples of strictly increasing
elements from $X$:
\[
\inc{X}{k} := %
\{ ( x_1, \ldots, x_k ) \in X^k : x_1 < \cdots < x_k \}.
\]

\item A kernel $K : X \times Y \to [0,\infty)$ is said to be
\textit{totally nonnegative of order $k$}, denoted by $K \in \TN^{(k)}_{X
\times Y}$, if for all integers $1 \leq r \leq k$ and tuples $\bx \in
\inc{X}{r}, \by \in \inc{Y}{r}$, the ``submatrix''
\[
K[ \bx; \by ] := ( K(x_i, y_j) )_{i,j=1}^r
\]
has nonnegative determinant. We say $K$ is \textit{totally nonnegative},
denoted by $K\in \TN_{X \times Y}$ or $\TN^{(\infty)}_{X \times Y}$ if the above
holds for all integers $k \geq 1$.

\item Similarly, if the determinants are all positive then we say
$K$ is \textit{totally positive of order $k$}, denoted by $K \in
\TP^{(k)}_{X \times Y}$ -- or \textit{totally positive} if this holds for
all $k$, denoted by $K \in \TP_{X \times Y}$ or $\TP^{(\infty)}_{X \times
Y}$.

\item If $X,Y$ are finite sets, of sizes $m,n$ respectively, we will also
use $\TN_{m \times n}^{(k)}$ or $\TP_{m \times n}^{(k)}$.
\end{enumerate}
\end{definition}
Throughout this work, we adopt the convention that $0^{\alpha} = 0$ if $\alpha>0$, while $0^{0} := 1$. Also, $[p] :=\{1,\dots, p\}$ for all integers $ p\in \mathbb{Z}_{>0}$.

In recent work \cite{BGKP-TN}, Belton--Guillot--Khare--Putinar classified
the post-composition maps $F[-]$ that preserve $\TN^{(k)}$ kernels on $X
\times Y$. In other words, if $K \in \TN^{(k)}_{X \times Y}$ then $F
\circ K \in \TN^{(k)}_{X \times Y}$.

The goal of this work is to explore this problem with greater flexibility
-- wherein the orders of total positivity $(k,k)$ are now allowed to
differ so that one now seeks transforms, not just preservers -- and more strongly, to tackle this classification problem in several variables. To state this requires the following notation.

\begin{definition}
Given $X,Y$ as above and an integer $p \geq 1$, a function $F(\bt)$ of
$\bt = (t_1, \dots, t_p) \in [0,\infty)^p$ acts on a tuple of totally
nonnegative (of some orders) kernels ${\bf K} = (K_1, \dots, K_p) : X
\times Y \to [0,\infty)^p$ via:
\[
F[-] : {\bf K} \mapsto F \circ {\bf K};
\qquad
(x,y) \mapsto F(K_1(x,y), \dots, K_p(x,y)).
\]
The analogous notion makes sense where the kernels are $\TP$ and $F$ has
domain $(0,\infty)^p$.
\end{definition}

Of course, if the domains $X,Y$ are finite then the kernels are matrices, and the post-composition transforms are precisely entrywise functions applied to these (tuples of) matrices.

We are interested in understanding when $F \circ {\bf K}$ is $\TN$ or
$\TP$ (of some order) if $K_1, \dots, K_p$ are.
As we see below, this more challenging question requires additional
tools from measure theory and analysis, as well as novel arguments involving (regular) P\'olya frequency functions -- and in this work, we
provide a complete solution to the following

\begin{question}\label{Qmain}
Given nonempty totally ordered sets $X,Y$, an integer $p \geq 1$, and
arbitrary integers $1 \leq k_1, \dots, k_p, l \leq \infty$, classify all
entrywise transforms $F: [0,\infty)^{p} \to [0,\infty)$ satisfying
\begin{equation}\label{ETNq}
F[-] : \TN^{(k_1)}_{X \times Y} \times \cdots \times \TN^{(k_p)}_{X
\times Y} \to \TN^{(l)}_{X \times Y},
\end{equation}
and similarly, all entrywise transforms $F: (0,\infty)^{p} \to (0,\infty)$ satisfying
\begin{equation}\label{ETPq}
F[-] : \TP^{(k_1)}_{X \times Y} \times \cdots \times \TP^{(k_p)}_{X
\times Y} \to \TP^{(l)}_{X \times Y}.
\end{equation}
\end{question}

We first state the complete classification in the $\TN$ case, and then move on to the $\TP$ case. In the case of one variable studied in \cite{BGKP-TN}, the univariate preservers of $\TN^{(2)}$ matrices were -- up to rescaling -- the powers $t^\alpha$ for $\alpha \geq 0$ (with $0^0 = 1$), and the Heaviside indicator ${\bf 1}_{t>0}$. The first nontrivial part of the next result says that in the multivariate case, the corresponding transforms are precisely products of such functions in each variable.

\begin{theorem}[$\TN$ case]\label{TmainTN}
Fix nonempty totally ordered sets $X, Y$ and an integer $p \geq 1$. Now
fix integers $1 \leq k_1, \dots, k_p, l \leq \infty$, and define
$\mathcal{N} := \min \{ |X|, |Y|, l \} \in \mathbb{Z}_{>0} \sqcup \{\infty\}$.
\begin{enumerate}
\item If $\mathcal{N} = 1$, then the maps $F$
satisfying~\eqref{ETNq} are all functions $: [0,\infty)^p \to
[0,\infty)$.

\item If $\mathcal{N} = 2$, then $F : [0,\infty)^p \to [0,\infty)$ satisfies \eqref{ETNq} if and only if there exists a subset of indices $J \subseteq [p]$ and nonnegative constants $c$ and $\{ \alpha_j : j \in J \}$ such that
\[
F({\bf t}) = c \prod_{j \in J} t_j^{\alpha_j} \prod_{i \not\in J} {\bf 1}_{t_i > 0}.
\]
Moreover, if $k_j < \mathcal{N}$ for any $j \in [p]$, then $j \in J$ and $\alpha_j = 0$.

\item If $\mathcal{N} = 3$, then the functions $F$
satisfying~\eqref{ETNq} are the nonnegative constants and the individual
powers $F(\bt) = c t_j^{\alpha_j}$ for a single $j \in [p]$, with $c >
0, \alpha_j \geq 1$, and $k_j \geq \mathcal{N}$.

\item In all other cases -- i.e.\ if $4 \leq \mathcal{N} \leq \infty$,
the functions $F$ satisfying~\eqref{ETNq} are the nonnegative constants
and the homotheties $F(\bt) = c t_j$ for a single $j \in
[p]$, with $c > 0$ and $k_j \geq \mathcal{N}$.
\end{enumerate}
\end{theorem}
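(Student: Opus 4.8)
The plan is to reduce everything to the one-variable classification --- the preserver theorem of Belton--Guillot--Khare--Putinar \cite{BGKP-TN}, sharpened here to transforms, which is the case $p=1$ --- together with a few elementary closure properties of $\TN^{(2)}_{X \times Y}$ and a small repertoire of explicit totally nonnegative test kernels. Sufficiency is essentially bookkeeping. If $\mathcal{N}=1$, membership in $\TN^{(l)}_{X \times Y}$ only asks for nonnegative entries, so every $F$ works. If $\mathcal{N}=2$, one checks that $\TN^{(2)}_{X \times Y}$ is closed under raising entries to a power $\alpha\ge 0$, under Hadamard products, under multiplication by the support indicator $\mathbf{1}_{K>0}$ of a $\TN^{(2)}$ kernel $K$ (here $K(x,y)K(x',y')\ge K(x,y')K(x',y)>0$ forces $K(x,y),K(x',y')>0$), and under scaling by $c\ge 0$; these are $2\times 2$ computations. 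If $\mathcal{N}\ge 3$ the maps listed in parts (3)--(4) depend on a single coordinate, so applied to a tuple they produce $c\,K_j^{\alpha_j}$ or a constant, and sufficiency becomes the $p=1$ assertion that $t\mapsto ct^{\alpha}$ preserves $\TN^{(3)}$ for $\alpha\ge 1$ and preserves $\TN^{(k)}$, $k\ge 4$, for $\alpha=1$ (constants lie in every $\TN^{(k)}_{X \times Y}$). That non-constant dependence on $t_j$ requires $k_j\ge\mathcal{N}$ --- and the ``moreover'' in part (2) --- is forced because for a merely nonnegative ($\TN^{(1)}$) kernel $K_j$ neither a positive power of $K_j$ nor $\mathbf{1}_{K_j>0}$ need be $\TN^{(2)}$.

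For necessity, fix $F$ satisfying \eqref{ETNq}; since $\TN^{(l)}_{X \times Y}$ constrains only submatrices of size $\le\min\{|X|,|Y|\}$ we may replace $l$ by $\mathcal{N}$, and $\mathcal{N}=1$ is vacuous. Inserting constant kernels $K_i\equiv c_i>0$ for $i\ne j$ shows $t\mapsto F(c_1,\dots,t,\dots,c_p)$ is a one-variable transform $\TN^{(k_j)}_{X \times Y}\to\TN^{(\mathcal{N})}_{X \times Y}$; by the $p=1$ classification this is, up to a positive constant, a power $t^{\alpha_j}$ (with $\alpha_j\ge 0$; in fact $\alpha_j\ge 1$ when $\mathcal{N}\ge 3$ and $\alpha_j=1$ when $\mathcal{N}\ge 4$) or the Heaviside $\mathbf{1}_{t>0}$ (only if $\mathcal{N}=2$), and is non-constant in $t_j$ only if $k_j\ge\mathcal{N}$. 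Next, feeding in rank-one kernels $K_i(x,y)=u_i(x)v_i(y)$, which are totally nonnegative of every order, and extracting a $2\times 2$ minor of the image yields the multiplicative supermodularity
\[
F(a\ast b)\,F(a'\ast b')\ \ge\ F(a\ast b')\,F(a'\ast b)
\]
for all $a,a',b,b'$ in the domain, where $\ast$ denotes coordinatewise multiplication. Combining this with the per-axis monomial/Heaviside forms reduces the identification of $F$ to a Pexider-type functional equation; solving it --- the only subtle point being that a cross term $e^{\varepsilon\log t_i\log t_j}$ is incompatible with $\TN^{(2)}_{X \times Y}$ unless $\varepsilon=0$, which one reads off by testing against two explicit $2\times 2$ elements of $\TN^{(2)}_{X \times Y}$ --- produces $F(\bt)=c\prod_{j\in J}t_j^{\alpha_j}\prod_{i\notin J}\mathbf{1}_{t_i>0}$, which is part (2).

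Now assume $\mathcal{N}\ge 3$. Part (2) still applies (the image is in particular $\TN^{(2)}$), so $F(\bt)=c\prod_{j\in J}t_j^{\alpha_j}$; the Heaviside factors are now excluded because the support indicator of a $\TP$ kernel need not be $\TN^{(3)}$, and each $\alpha_j$ ($j\in J$) satisfies $\alpha_j\ge 1$ with $k_j\ge\mathcal{N}$ by the axis restrictions. The remaining and central claim is that $|J|\le 1$. If $\{1,2\}\subseteq J$, then fixing the other coordinates makes $F$ equal to $c\,t_1^{\alpha_1}t_2^{\alpha_2}$ with $\alpha_1,\alpha_2\ge 1$, and one must produce $K_1\in\TN^{(k_1)}_{X \times Y}$ and $K_2\in\TN^{(k_2)}_{X \times Y}$ for which $K_1^{\odot\alpha_1}\odot K_2^{\odot\alpha_2}$ violates a $3\times 3$ determinantal inequality. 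This is where the specialized kernels enter: although a single $\TN^{(3)}$ kernel raised to a power $\ge 1$ stays in $\TN^{(3)}$, the cone $\TN^{(3)}_{X \times Y}$ is \emph{not} closed under Hadamard products of distinct members, and one manufactures --- using generalized Vandermonde kernels, Hankel kernels, and strictly totally positive P\'olya frequency functions --- a counterexample for every admissible exponent pair $(\alpha_1,\alpha_2)$, upgrading finite counterexample matrices to kernels on $X \times Y$ by the standard extension of finite $\TN^{(k)}$ matrices. Granting $|J|\le 1$: if $J=\varnothing$ then $F\equiv c$; if $J=\{j\}$ then $F(\bt)=c\,t_j^{\alpha_j}$, the axis restriction gives $\alpha_j\ge 1$ and $k_j\ge\mathcal{N}$ (part (3)), and when $\mathcal{N}\ge 4$ the further rigidity $\alpha_j=1$ follows exactly as in the one-variable theorem, driven by the kernel going back to Schoenberg \cite{Schoenberg55} and Karlin \cite{KarlinTAMS}.

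The main obstacle is this two-variable obstruction at orders $\ge 3$: for $k\le 2$ the relevant cone is Hadamard-closed, so products of univariate preservers survive, and the genuinely new phenomenon is the impossibility of any nontrivial two-variable dependence once $k\ge 3$. Proving it requires breaking $\TN^{(3)}$ for $c\,K_1^{\odot\alpha_1}\odot K_2^{\odot\alpha_2}$ \emph{uniformly} in $\alpha_1,\alpha_2\ge 1$, and no single fixed finite counterexample will do --- large entrywise exponents restore $\TN^{(3)}$ by diagonal dominance --- so one needs a parametrized family of totally positive kernels tuned to the exponents, which is precisely what the Vandermonde/Hankel/P\'olya-frequency and Schoenberg--Karlin toolkit provides; by contrast, the reduction to finitely many points, the extension of finite counterexamples to kernels on arbitrary $X,Y$, and the Pexider-type analysis are comparatively routine.
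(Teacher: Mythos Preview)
Your proposal has the right architecture, but it misdiagnoses the central step and imports machinery from the wrong theorem. The $\TN$ case is the \emph{easy} one; the P\'olya frequency functions, Vandermonde and Hankel kernels you invoke are used in the paper for the $\TP$ case (Theorem~\ref{TmainTP}), not here. For $\TN$ one can pad by zeros (Lemma~\ref{Lpadding}), so all arguments are finite-matrix computations.

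The most serious error is your claim that ``no single fixed finite counterexample will do'' for the $|J|\le 1$ obstruction at $\mathcal{N}\ge 3$, and that one needs a parametrized family tuned to $(\alpha_1,\alpha_2)$. In fact a \emph{single} pair of $0$--$1$ matrices suffices for all exponents simultaneously: take
\[
K_1 = \begin{pmatrix} 1 & 1 & 0 \\ 1 & 1 & 1 \\ 1 & 1 & 1 \end{pmatrix},\qquad K_2 = K_1^T,\qquad K_j = {\bf 1}_{3\times 3}\ (j\ge 3),
\]
all in $\TN_{3\times 3}$. Since the entries are $0$ or $1$, entrywise powers leave them unchanged, so $K_1^{\circ\alpha_1}\circ K_2^{\circ\alpha_2}$ is the fixed matrix $\begin{pmatrix} 1 & 1 & 0 \\ 1 & 1 & 1 \\ 0 & 1 & 1 \end{pmatrix}$ with determinant $-1$, independently of $\alpha_1,\alpha_2>0$. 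Your worry about ``diagonal dominance restoring $\TN^{(3)}$ for large exponents'' evaporates once the entries are chosen to be idempotent under powers. The same $0$--$1$ trick, applied to the matrix in~\eqref{Esqrt2}, rules out the Heaviside factors.

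A second gap: your Pexider sketch for part~(2) handles only the open orthant $(0,\infty)^p$. The paper's Theorem~\ref{Tpowers} spends considerable effort (Steps~2 and~3) extending $F$ to the boundary of $[0,\infty)^p$, distinguishing which coordinates carry a power versus a Heaviside indicator; this is not automatic and your proposal does not address it. Finally, for $\mathcal{N}\ge 4$ the rigidity $\alpha_j=1$ is obtained not from the Schoenberg--Karlin kernel but from the explicit $4\times 4$ $\TP$ matrix of \cite[Example~5.8]{FJS}; the Jain kernel $(1+x_ix_j)$ is what forces $k_j\ge\mathcal{N}$.
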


\begin{remark}
The point of defining $\mathcal{N}$ is that one works only with minors of $F \circ {\bf K}$ that are of size $\mathcal{N} \times \mathcal{N}$ or smaller.
\end{remark}

Despite the fact that $X,Y$ are arbitrary sets, the above classification of $\TN$ kernel transforms uses mostly matrix arguments. The key is to use smaller matrices (or kernels on smaller domains) to build up to bigger ones, because one can ``pad by zeros’’ any $\TN$ kernel on a smaller domain, to arrive at a $\TN$ kernel on $X \times Y$.

In contrast, the classification of $\TP$ kernel transforms turns out to be more challenging. As one can no longer pad kernels by zeros, one cannot directly go from $\TP$ kernels on smaller domains to ``bigger’’ ones. Furthermore, one needs to work with $\TP$ kernels on possibly infinite sets $X,Y$. Thus we employ (strictly totally positive) P\'{o}lya frequency functions, generalized Vandermonde kernels, and a kernel which can be traced back to works of Schoenberg \cite{Schoenberg55} and Karlin \cite{KarlinTAMS} as well as recent papers by Jain \cite{Jain, TJ} and Khare \cite{AK}. In this work, we show:

\begin{theorem}[$\TP$ case]\label{TmainTP}
Fix nonempty totally ordered sets $X,Y$ and an integer $p \geq 1$. Now
fix integers $1 \leq k_1, \dots, k_p, l \leq \infty$, and define
$\mathcal{N} := \min \{ |X|, |Y|, l \} \in \mathbb{Z}_{>0}\sqcup \{\infty\}$.
\begin{enumerate}
\item If $\mathcal{N} = 1$, then the maps $F$
satisfying~\eqref{ETPq} are all functions $: (0,\infty)^p \to
(0,\infty)$.

\item If $\mathcal{N} = 2$, then the maps $F : (0,\infty)^p \to
(0,\infty)$ satisfying~\eqref{ETPq} are the ``mixed power functions''
$F(\bt) = c \bt^{\boldsymbol{\alpha}} = c \prod_{j=1}^p t_j^{\alpha_j}$
for some reals $c > 0 $ and $\alpha_j \geq 0$,
with $\boldsymbol{\alpha}\neq {\bf 0}$, and $\alpha_j = 0$ if $k_j < \mathcal{N}$.

\item If $\mathcal{N} = 3$, then the functions $F$
satisfying~\eqref{ETPq} are the individual
powers $F(\bt) = c t_j^{\alpha_j}$ for a single $j \in [p]$, with $c >
0, \alpha_j \geq 1$, and $k_j \geq \mathcal{N}$.

\item In all other cases -- i.e.\ if $4 \leq \mathcal{N} \leq \infty$,
we further assume that if $X,Y$ are both infinite but $l < \infty$ then all $k_j$ are finite.
Now the functions $F$ satisfying~\eqref{ETPq} are the homotheties $F(\bt) = c t_j$ for a single $j \in
[p]$, with $c > 0$ and $k_j \geq \mathcal{N}$.
\end{enumerate}
\end{theorem}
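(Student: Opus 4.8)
I would prove the four parts in the order stated, feeding part~(2) into parts~(3)--(4) exactly as in the $\TN$ case, and in each case split into sufficiency and necessity after a common reduction to model domains. Since any $\mathcal N\times\mathcal N$ minor of $F\circ{\bf K}$ involves only $\mathcal N$ points of $X$ and of $Y$, and $\TP^{(l)}_{X\times Y}=\TP^{(\mathcal N)}_{X\times Y}$, only $\mathcal N$-point subconfigurations are relevant. When $\mathcal N<\infty$ I would fix $\mathcal N$-element subsets $X_0\subseteq X$, $Y_0\subseteq Y$ and use an \emph{extension lemma}: every tuple of $\TP^{(k_j)}$ matrices on $X_0\times Y_0$ is the restriction of a tuple of $\TP^{(k_j)}$ kernels on $X\times Y$ (one borders/interpolates a totally positive matrix one point at a time --- the admissible new rows and columns form a non-empty open set --- and passes to a direct limit, which preserves total positivity since that is a finite condition). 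With this, together with the obvious fact that restrictions of $\TP^{(l)}$ kernels are $\TP^{(l)}$, the statement~\eqref{ETPq} becomes \emph{equivalent} to the corresponding statement about $\mathcal N\times\mathcal N$ matrices; when $\mathcal N=\infty$ the same argument with countably infinite $X_0,Y_0$ lets one take $X=Y=\mathbb Q$ (or $\mathbb R$), so that generalized Vandermonde, Hankel, and strictly totally positive P\'olya frequency kernels become available as test kernels.

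\emph{Sufficiency} is quick. Part~(1) is vacuous. For part~(2), the entrywise product of two $\TP^{(2)}$ kernels is $\TP^{(2)}$ and $K\mapsto K^{\circ\alpha}$ preserves $\TP^{(2)}$ for every $\alpha>0$ --- both because $K(x_1,y_1)K(x_2,y_2)>K(x_1,y_2)K(x_2,y_1)>0$ is stable under multiplying such inequalities and under raising them to a positive power --- so $c\,\bt^{\boldsymbol\alpha}$ with $\boldsymbol\alpha\neq{\bf 0}$ maps the given tuple into $\TP^{(2)}_{X\times Y}=\TP^{(l)}_{X\times Y}$. For parts~(3)--(4), $F(\bt)=ct_j$ works since $F\circ{\bf K}=cK_j\in\TP^{(k_j)}\subseteq\TP^{(\mathcal N)}=\TP^{(l)}$ whenever $k_j\geq\mathcal N$, and for $\mathcal N=3$ one additionally uses the fact that $K\mapsto K^{\circ\alpha}$ preserves $\TP^{(3)}$ for all $\alpha\geq1$.

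\emph{Necessity} is the substance. Since $\TP^{(k)}\subseteq\TP^{(2)}$, any $F$ as in~\eqref{ETPq} in particular maps into $\TP^{(2)}_{X\times Y}$, so once part~(2) is proved one gets for free that $F(\bt)=c\,\bt^{\boldsymbol\alpha}$ with $c>0$, $\alpha_j\geq0$, $\boldsymbol\alpha\neq{\bf 0}$, and $\alpha_j=0$ when $k_j<2$. Part~(2) itself I would prove by feeding P\'olya frequency and generalized Vandermonde kernels into $2\times2$ windows: realizing a $2\times2$ matrix with entries $a,b,c,d>0$ and $ad>bc$, together with its order-reversal, as such windows, $\TP^{(2)}$-preservation yields $F(a)F(d)=F(b)F(c)$ whenever $ad=bc$ and that $F$ is non-decreasing in each coordinate $j$ with $k_j\geq2$; this Cauchy-type data forces $F(\bt)=c\prod_jt_j^{\alpha_j}$, and $\alpha_j=0$ for $k_j<2$ follows by inserting a $\TP^{(2)}$-violating $\TP^{(1)}$ kernel in slot $j$ and perturbing the other slots so that the output minor is controlled by coordinate $j$. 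For parts~(3)--(4) one starts from $F(\bt)=c\prod_jt_j^{\alpha_j}$ and shows: (i) if $\alpha_j>0$ for some $j$ with $k_j<\mathcal N$, then inserting a $\TP^{(k_j)}$-but-not-$\TP^{(\mathcal N)}$ kernel in slot $j$ (with suitable kernels in the other slots, localized so as not to affect the sign of the relevant minor) kills an $\mathcal N\times\mathcal N$ minor of the image; (ii) if two indices had positive exponents, then testing on rank-one perturbations $K_i(x,y)=a_i(x)b_i(y)\bigl(1+\varepsilon c_i(x,y)\bigr)$ of totally positive kernels, the weighted Hadamard combination $\prod_iK_i^{\circ\alpha_i}$ equals a positive rank-one kernel times $1+\varepsilon\sum_i\alpha_ic_i+O(\varepsilon^2)$, whose $3\times3$ determinant is $\varepsilon^2$ times an indefinite quadratic form in the arrays $\bigl(c_i(x_a,y_b)\bigr)$ that can be made negative by a suitable choice of (strictly supermodular) $c_i$ --- so the image is not $\TP^{(3)}\supseteq\TP^{(\mathcal N)}$. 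Hence $F(\bt)=ct_j^{\alpha_j}$ for a single $j$ with $k_j\geq\mathcal N$, and what remains is the one-variable question of which $\alpha$ make $K\mapsto K^{\circ\alpha}$ send $\TP^{(k_j)}_{X\times Y}$ into $\TP^{(\mathcal N)}_{X\times Y}$: for $\mathcal N=3$, exactly $\alpha\geq1$ (a sub-one Hadamard power of a $3\times3$ totally positive matrix need not be $\TP^{(3)}$); for $\mathcal N\geq4$, exactly $\alpha=1$. The extra hypothesis in part~(4) --- all $k_j$ finite when $X,Y$ are both infinite and $l<\infty$ --- is precisely what guarantees that the kernel used to rule out $\alpha\neq1$, which is strictly totally positive only up to a controlled order, actually belongs to $\TP^{(k_j)}_{X\times Y}$.

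\emph{Main obstacle.} The hard part will be this last step: ruling out every $\alpha\neq1$ for output order $\geq4$ (and every $\alpha<1$ for order $3$) \emph{on possibly infinite domains, where kernels cannot be padded by zeros}. One cannot descend to small matrices and build back up as in the $\TN$ case; instead one must exhibit, on the ambient domain, a strictly totally positive test kernel --- the one going back to Schoenberg and Karlin and studied recently by Jain and Khare, together with its generalized-Vandermonde and P\'olya-frequency companions --- whose $\alpha$-th Hadamard power has an explicitly computable $\mathcal N\times\mathcal N$ determinant that changes sign unless $\alpha=1$, and one must arrange for it to carry exactly the right order of total positivity. Managing this accounting is the technical heart of the argument, and it is what forces the supplementary hypothesis in case~(4).
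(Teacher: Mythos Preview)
Your proposal has the right architecture and correctly identifies the main obstacle in part~(4), but there is a genuine gap in part~(2), and this gap propagates everywhere else.

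\textbf{The missing ingredient is continuity.} You write that realizing a $2\times2$ matrix with $ad>bc$ ``together with its order-reversal'' yields $F(a)F(d)=F(b)F(c)$ whenever $ad=bc$. But a matrix with $ad=bc$ is \emph{not} in $\TP^{(2)}$, so you cannot test $F$ on it directly; you must approach it by a limit. For that limit to give you the Cauchy functional equation you need $F$ continuous, and you never prove this. The paper spends real effort here (its Proposition~3.2 plus an Appendix): from $F[-]:(\TP_{2\times2})^p\to\TP_{2\times2}$ it first extracts that $F$ is jointly increasing, hence measurable, hence has a.e.\ continuity points; it then uses a carefully chosen asymmetric test matrix to bootstrap to coordinatewise monotonicity and full continuity. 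Only after this does the paper invoke Whitney density to pass to the already-solved $\TN$ case and read off $F({\bf t})=c{\bf t}^{\boldsymbol\alpha}$. Your route---deriving the Cauchy equation directly from $2\times2$ $\TP$ windows---is not wrong in spirit, but it cannot close without continuity, and that is not a one-liner.

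\textbf{The extension lemma is doing more work than you acknowledge.} Your assertion that every $\TP^{(k)}$ matrix on $X_0\times Y_0$ extends to a $\TP^{(k)}$ kernel on the ambient $X\times Y$ by ``bordering one point at a time'' is plausible for small orders but is not proved, and the paper does not use it. For $2\times2$ the paper exploits the concrete fact that every $\TP_{2\times2}$ matrix is a rescaled generalized Vandermonde matrix, hence a submatrix of $\lambda^{-1}e^{xy}$. For larger sizes it never extends matrices outward; instead it runs the other direction: once $F$ is known to be continuous, it \emph{approximates} $\TN$ matrices (or their inflations to $X\times Y$) by $\TP$ kernels via Whitney/Theorem~1.2, applies $F$, and passes to the limit (this is the content of its Proposition~3.6). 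Your extension lemma, even if true, would still need continuity of $F$ at the back end to compare the output on $X\times Y$ with the output on $X_0\times Y_0$ by limits.

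\textbf{On ruling out two positive exponents.} Your perturbative argument---write $K_i=a_ib_i(1+\varepsilon c_i)$ and expand the $3\times3$ minor of $\prod K_i^{\circ\alpha_i}$ to order $\varepsilon^2$---is different from the paper's, which instead uses one-sided regular P\'olya frequency functions $\Lambda_{a,\delta}$, approximates them by strictly $\TP$ PF functions, and lets a parameter $a\to0^+$ to manufacture (in the limit) the Fallat--Johnson--Sokal $\{0,1\}$ matrix with negative determinant. Your idea could be made to work, but note that $a_i(x)b_i(y)$ is rank one, hence not $\TP^{(2)}$, so you must ensure the perturbed kernels are genuinely $\TP$ on the ambient domain (not merely $\TP^{(2)}$), and you must check that your ``indefinite quadratic form'' claim survives the constraints that all $c_i$ be strictly supermodular. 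The paper's PF-function route avoids this bookkeeping entirely.

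In short: fix continuity first (it is the key technical step you are missing), and either prove your extension lemma or, as the paper does, replace it by continuity $+$ Whitney-type approximation going the other way.
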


As mentioned above, the special case of univariate preservers (i.e., $p=1, l= k_1$) was worked out in \cite{BGKP-TN}. Also, observe in both results that for $\mathcal{N} = 2$ (i.e., when considering only $1 \times 1$ and $2 \times 2$ minors), the transforms are products of power functions and indicators in multiple variables. In contrast, for $\mathcal{N} > 2$ it is striking that only one variable shows up -- and that indicators do not.

\subsection{Transforms of symmetric kernel-tuples}

We next turn to the counterparts of the above results for transforms of \textit{symmetric} kernels on a totally ordered set $X$. Already the results are more involved even for $\mathcal{N}=2$.

\begin{definition}\hfill\label{Dstrict}
\begin{enumerate}
\item Given a nonempty totally ordered set $X$, and an integer $1 \leq k \leq \infty$, define $\STN_X^{(k)}$ to be the set of kernels $K : X \times X \to [0,\infty)$ that are $\TN^{(k)}$ and symmetric: $K(x,x') = K(x',x)$ for all $x,x' \in X$. We also write $\STN_X$ for $\STN^{(\infty)}_X$.

\item Similarly, one has the ``totally positive'' counterparts $\STP_X^{(k)}$, including $\STP_X = \STP_X^{(\infty)}$.

\item We will also use the following two notions. First, a function $F : A \to [0,\infty)$, where $A \subseteq [0,\infty)^p$, is \textit{multiplicatively mid-convex} if 
\[
F(t_1, \dots, t_p) F(t'_1, \dots, t'_p) \geq F \left( \sqrt{t_1 t'_1}, \dots, \sqrt{t_p t'_p} \right)^2,
\]
whenever all three arguments lie in $A$.

\item Finally, we say a function $f: A \subseteq \mathbb{R}^{p} \to \mathbb{R}$ is \textit{jointly non-decreasing} (respectively, \textit{jointly increasing}) if 
\[
\bx,\by \in A,\ x_j < y_j\ \forall j \implies f(\bx) \leq f(\by) \text{ (resp.\ } f(\bx) < f(\by)\text{)}.
\]
\end{enumerate}
\end{definition}

Now it is natural to ask the symmetric counterpart of Question~\ref{Qmain}:

\begin{question}
Given a nonempty totally ordered set $X$, an integer $p \geq 1$, and
arbitrary integers $1 \leq k_1, \dots, k_p, l \leq \infty$, classify all
entrywise transforms
\begin{equation}\label{ESTNq}
F[-] : \STN^{(k_1)}_X \times \cdots \times \STN^{(k_p)}_X \to \STN^{(l)}_X,
\end{equation}
and similarly, all entrywise transforms
\begin{equation}\label{ESTPq}
F[-] : \STP^{(k_1)}_X \times \cdots \times \STP^{(k_p)}_X \to \STP^{(l)}_X.
\end{equation}
\end{question}

Our main results are as follows.

\begin{theorem}[Symmetric $\TN$ case]\label{TmainSTN}
Fix a nonempty totally ordered set $X$ and an integer $p \geq 1$. Now
fix integers $1 \leq k_1, \dots, k_p, l \leq \infty$, and define
$\mathcal{N} := \min \{ |X|, l \} \in \mathbb{Z}_{>0} \sqcup \{\infty\}$.
\begin{enumerate}
\item If $\mathcal{N} = 1$, then the maps $F$
satisfying~\eqref{ESTNq} are all functions $: [0,\infty)^p \to
[0,\infty)$.

\item If $\mathcal{N} = 2$, then the functions $F : [0,\infty)^p \to
[0,\infty)$ satisfying~\eqref{ESTNq} admit the following characterization:
\begin{enumerate}
\item If $|X|=2$, then such functions are precisely the nonnegative, jointly non-decreasing, and multiplicatively mid-convex maps on $[0,\infty)^p$, which do not depend on any $t_j$ for which $k_j < \mathcal{N} = 2$. In particular, each such $F$ is continuous on $(0,\infty)^p$.

\item Otherwise such functions are precisely the transforms in Theorem~\ref{TmainTN}(2).
\end{enumerate}

\item If $\mathcal{N} = 3$, then the functions $F$
satisfying~\eqref{ESTNq} are precisely the nonnegative constants and the mixed
power functions $F(\bt) = c \prod_{j \in J} t_j^{\alpha_j}$ with $c >
0$, $J$ a nonempty subset of $[p]$, and $\alpha_j \geq 1$, $k_j \geq \mathcal{N}$ for all $j \in J$.

\item If $\mathcal{N} = 4$, then the functions $F$ satisfying~\eqref{ESTNq} are precisely the nonnegative constants and the individual power functions $F(\bt) = c t_j^{\alpha_j}$ with $c > 0$, $k_j \geq \mathcal{N}$, and
\begin{itemize}
\item $\alpha_j \in \{1\}\cup [2,\infty)$ if $|X|=4$;
\item $\alpha_j = 1$ (i.e.\ $F$ is an individual homothety) otherwise.
\end{itemize}

\item In all other cases -- i.e.\ if $5 \leq \mathcal{N} \leq \infty$,
the functions $F$ satisfying~\eqref{ESTNq} are precisely the nonnegative constants
and the individual homotheties $F(\bt) = c t_j$ for a single $j \in
[p]$, with $c > 0$ and $k_j \geq \mathcal{N}$.
\end{enumerate}
\end{theorem}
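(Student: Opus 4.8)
The plan is to establish both directions --- that every function in the stated list satisfies~\eqref{ESTNq} (\emph{sufficiency}), and that these are the only ones (\emph{necessity}) --- case by case in increasing order of $\mathcal{N}$, with parts (3)--(5) bootstrapping from part (2). Two reductions are used throughout. \emph{Restriction:} if $K\in\STN^{(k)}_X$ then $K|_{S\times S}\in\STN^{(k)}_S$ for every finite $S\subseteq X$, so one may pass to matrices; and an entrywise $F$ automatically sends symmetric kernels to symmetric kernels, since $F(K(x,x'))=F(K(x',x))$. \emph{Padding:} a symmetric $\TN^{(k)}$ matrix on $n\le|X|$ points extends, by placing it in a corner block and filling the rest with zeros, to a kernel in $\STN^{(k)}_X$, so \eqref{ESTNq} is equivalent to its matrix version over all symmetric $\TN$ matrices of size $\le\mathcal{N}$ (together with size exactly $|X|$ when $|X|\le l$). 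From these come two preliminary observations: (a) since $\STN^{(l)}_X\subseteq\STN^{(2)}_X$ when $l\ge2$, any $F$ satisfying~\eqref{ESTNq} with $\mathcal{N}\ge2$ also satisfies the $l=2$ instance, so (once part (2) is proved, using $\mathcal{N}\ge2\Rightarrow|X|\ge2$) $F$ is a mixed power function times Heaviside factors on $(0,\infty)^p$ whenever $|X|\ge3$; (b) feeding in the symmetric $\TN^{(1)}$ kernels $\bigl(\begin{smallmatrix}0&u\\u&0\end{smallmatrix}\bigr)$ and $\bigl(\begin{smallmatrix}s&0\\0&s'\end{smallmatrix}\bigr)$, and more generally symmetric $\TN^{(r)}$ kernels possessing a negative $\mathcal{N}\times\mathcal{N}$ minor (e.g.\ suitably perturbed rank-two matrices $\mathbf{1}\mathbf{1}^{T}+\epsilon cc^{T}$), shows that $F$ cannot depend on any coordinate $t_j$ with $k_j<\mathcal{N}$.

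Consider $\mathcal{N}=2$. If $|X|=2$, only the $1\times1$ and $2\times2$ minors of a symmetric $\bigl(\begin{smallmatrix}a&b\\b&c\end{smallmatrix}\bigr)$ with $a,c,b\ge0$, $ac\ge b^{2}$ matter; sufficiency is the chain $F(\mathbf{b})^{2}\le F(\sqrt{\mathbf{a}\mathbf{c}})^{2}\le F(\mathbf{a})F(\mathbf{c})$ --- first joint monotonicity, then multiplicative mid-convexity --- and necessity follows by plugging in $\bigl(\begin{smallmatrix}y_j&x_j\\x_j&y_j\end{smallmatrix}\bigr)$ (forces monotonicity), $\bigl(\begin{smallmatrix}a_j&\sqrt{a_jc_j}\\\sqrt{a_jc_j}&c_j\end{smallmatrix}\bigr)$ (forces multiplicative mid-convexity), and constants (forces nonnegativity); continuity on $(0,\infty)^{p}$ is then the standard consequence of mid-convexity plus monotonicity after $t_j=e^{s_j}$. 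If $|X|\ge3$, sufficiency is immediate from Theorem~\ref{TmainTN}(2) (symmetric inputs form a subclass), while for necessity one checks that a symmetric $\TN^{(2)}$ kernel on three points is rich enough to rerun the necessity half of Theorem~\ref{TmainTN}(2): any $\bigl(\begin{smallmatrix}\beta&\gamma\\\delta&\varepsilon\end{smallmatrix}\bigr)$ with $\beta\varepsilon\ge\gamma\delta\ge0$ occurs as the off-principal submatrix $K[\{1,2\};\{2,3\}]$ of a symmetric $\TN^{(2)}$ matrix on $\{1,2,3\}$ (take the two unseen diagonal entries large), and this can be arranged independently in each coordinate.

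Now let $\mathcal{N}\ge3$; then $|X|\ge3$, so by observation (a) and part (2), $F(\bt)=c\prod_{j\in J}t_j^{\alpha_j}$ on $(0,\infty)^{p}$ for some $J\subseteq[p]$ and $\alpha_j\ge0$. It remains to (i) show the surviving exponents satisfy $\alpha_j\ge1$; (ii) determine $J$ (nonempty if $\mathcal{N}=3$, a singleton once $\mathcal{N}\ge4$); (iii) pin down the allowed exponents in the singleton case (the set $\{1\}\cup[2,\infty)$ when $|X|=4=\mathcal{N}$, and $\{1\}$ when $|X|\ge5$ or $\mathcal{N}\ge5$); and (iv) rule out Heaviside factors and extend the formula continuously to $[0,\infty)^{p}$. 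For (i) and the collapse to $\alpha_j=1$ one uses \emph{symmetric} totally nonnegative test kernels in place of the non-symmetric (generalized) Vandermonde kernels of Theorem~\ref{TmainTN}: Hankel kernels $K(x,x')=g(x+x')$ from Stieltjes moment sequences, the symmetrized kernel $K(x,x')=\exp\bigl((x+x')^{2}\bigr)$ (which is totally positive, being a positive diagonal conjugate of $(x,x')\mapsto e^{2xx'}$), Cauchy-type kernels $1/(x+x')$, and the Schoenberg--Karlin kernel of \cite{Schoenberg55,KarlinTAMS} (cf.\ \cite{AK,Jain}); evaluating $F$ on tuples of these and reading off the sign of a carefully chosen $\mathcal{N}\times\mathcal{N}$ minor forces the exponent inequalities just as in the univariate analysis of \cite{BGKP-TN}. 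For (iii) when $|X|=4=\mathcal{N}$: a symmetric $\TN^{(4)}$ matrix on four points is positive semidefinite, so plugging in the all-ones kernel for every coordinate except the surviving $j$ forces $t\mapsto t^{\alpha_j}$ to preserve positive semidefiniteness of $4\times4$ positive semidefinite matrices, whence $\alpha_j\in\{0,1\}\cup[2,\infty)$ by the classical entrywise-power threshold (FitzGerald--Horn) --- and the critical rank-two witness $\mathbf{1}\mathbf{1}^{T}+\epsilon cc^{T}$ is itself totally nonnegative, so it lies in the input class. Conversely, for $\alpha_j\in\{1\}\cup[2,\infty)$ the lower-order minors of $c\,K_j^{\circ\alpha_j}$ are handled by the elementary estimate $\prod(a_jd_j)\ge\prod(b_jc_j)$ and the $4\times4$ minor by Schur's product theorem; and part (5) follows from part (4) with $|X|\ge5$ by weakening the target to $\STN^{(4)}_X$ (legitimate since $\STN^{(l)}_X\subseteq\STN^{(4)}_X$ for $l\ge4$).

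The step I expect to be the main obstacle is (ii) together with the ``$\alpha_j\in(1,2)$ forbidden already at $|X|=4$'' refinement: showing that a genuine mixed product $t_i^{\alpha_i}t_j^{\alpha_j}$ cannot occur once $\mathcal{N}\ge4$ (it \emph{can} at $\mathcal{N}=3$), and that at $|X|\ge5$ even $\alpha_j>1$ fails --- this is exactly where the symmetric structure runs out. Unlike the non-symmetric case, where a pair of Vandermonde kernels with incompatible parameters produces a negative off-principal $3\times3$ minor after applying a product of two powers, the natural symmetric candidates (Gaussian, $\min(x,x')$, Cauchy, Hankel) are each individually closed under Hadamard products and powers, so one must engineer the witnesses with care, combining symmetrized generalized Vandermonde kernels with the Schoenberg--Karlin kernel to break the remaining degrees of freedom and force a negative $4\times4$ (respectively, for $|X|\ge5$, off-principal $4\times4$) minor. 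Once (i)--(iv) are in place, sufficiency for $\mathcal{N}=3$ (mixed powers with all $\alpha_j\ge1$) and for $\mathcal{N}\ge4$ (a single power, respectively a homothety) follows from the entrywise-power preservation of $\TN^{(r)}$ for $r\le\mathcal{N}$, the Hadamard/Schur closure of the relevant symmetric classes, and --- when $|X|>l$ --- the corresponding statements for non-principal minors; the Heaviside factors are excluded by the continuity forced at each order $\ge3$.
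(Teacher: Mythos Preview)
Your overall architecture is sound and matches the paper's: reduce via padding to the $\mathcal{N}=2$ analysis (Proposition~\ref{Pmidconvex} and the $\STN_{3\times 3}$ completion of $\TN_{2\times 2}$ matrices), then bootstrap. Several pieces are essentially correct: the $|X|=2$ characterization, the $\TN_{2\times 2}\hookrightarrow\STN_{3\times 3}$ embedding for $|X|\ge3$, the $k_j\ge\mathcal{N}$ constraint via rank-two Hankel-type matrices $(1+u_0^{i+j})$ (your $\mathbf{1}\mathbf{1}^T+\epsilon cc^T$ works provided $c$ is \emph{monotone}, which you should say explicitly), and the $\{1\}\cup[2,\infty)$ restriction at $|X|=4$ via entrywise powers of PSD matrices.

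However, there are genuine gaps. The most serious is your step~(ii), which you yourself flag: you never actually produce the witness ruling out a genuine two-variable product $t_i^{\alpha_i}t_j^{\alpha_j}$ once $\mathcal{N}\ge4$. The kernels you list (Gaussian, Cauchy, Hankel, $\min$) are each closed under Hadamard products and powers, as you note, so none of them alone can give a negative minor after applying $F$. The paper's resolution is far more concrete and does not use continuous kernels at all: it takes the two explicit $4\times4$ symmetric $\TN$ matrices
\[
K_1=\begin{pmatrix}2&2&1&1\\2&2&1&1\\1&1&2&2\\1&1&2&2\end{pmatrix},\qquad
K_2=\begin{pmatrix}2&1&1&0\\1&2&2&1\\1&2&2&1\\0&1&1&2\end{pmatrix}
\]
from \cite[Example~4.4]{FJS}, and computes a single off-principal $3\times3$ minor of $K_1^{\circ\alpha_1}\circ K_2^{\circ\alpha_2}$ to be $-2^{\alpha_2}(4^{\alpha_1}-1)<0$. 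Your proposal offers no substitute for this computation.

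Two secondary gaps: First, your exclusion of Heaviside factors by ``continuity forced at each order $\ge3$'' is not an argument --- mid-convexity gives continuity only on the \emph{open} orthant, and the Heaviside indicator is continuous there; what is needed is a test matrix with zero entries whose image has a negative $3\times3$ minor. The paper reuses the symmetric matrix~\eqref{Esqrt2} for exactly this. Second, for $|X|\ge5$ you never say which kernel forces $\alpha_{j_0}\notin[2,\infty)$; the paper invokes a specific $5\times5$ symmetric $\TN$ matrix from \cite[Example~5.10]{FJS}. Finally, your sufficiency for $\mathcal{N}=3$ tacitly assumes $\STN_{3\times3}$ is closed under Hadamard products --- this is true but nontrivial (it is \cite[Proposition~4.2]{FJS}), and ``Schur closure'' alone does not give it, since $\TN_{3\times3}$ is \emph{not} closed under Hadamard products.
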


Finally, we state the symmetric $\TP$ counterpart:

\begin{theorem}[Symmetric $\TP$ case]\label{TmainSTP}
Fix a nonempty totally ordered set $X$ and an integer $p \geq 1$. Now
fix integers $1 \leq k_1, \dots, k_p, l \leq \infty$, and define
$\mathcal{N} := \min \{ |X|, l \} \in \mathbb{Z}_{>0} \sqcup \{\infty\}$.
\begin{enumerate}
\item If $\mathcal{N} = 1$, then the maps $F$
satisfying~\eqref{ESTPq} are all functions $: (0,\infty)^p \to
(0,\infty)$.

\item If $\mathcal{N} = 2$, then the functions $F : (0,\infty)^p \to
(0,\infty)$ satisfying~\eqref{ESTPq} admit the following characterization:
\begin{enumerate}
\item If $|X|=2$, then such functions are precisely the positive, jointly increasing, and multiplicatively mid-convex maps on $(0,\infty)^p$, which do not depend on any $t_j$ for which $k_j < \mathcal{N} = 2$. In particular, each such $F$ is continuous on $(0,\infty)^p$.

\item Otherwise such functions are precisely the mixed power functions in Theorem~\ref{TmainTP}(2).
\end{enumerate}

\item If $\mathcal{N} = 3$, then the functions $F$
satisfying~\eqref{ESTPq} are precisely the mixed
power functions $F(\bt) = c \prod_{j \in J} t_j^{\alpha_j}$ with $c >
0$, $J$ a nonempty subset of $[p]$, and $\alpha_j \geq 1,\ k_j \geq \mathcal{N}$ for all $j \in J$.

\item If $\mathcal{N} = 4$, we further assume that if $X$ is infinite then all $k_j < \infty$. Now the maps $F$ satisfying~\eqref{ESTPq} are precisely the individual powers $F(\bt) = c t_j^{\alpha_j}$ with $c > 0$, $k_j \geq \mathcal{N}$, and
\begin{itemize}
\item $\alpha_j \in \{1\}\cup [2,\infty)$ if $|X|=4$;
\item $\alpha_j = 1$ (i.e.\ $F$ is an individual homothety) otherwise.
\end{itemize}

\item In all other cases -- i.e.\ if $5 \leq \mathcal{N} \leq \infty$ -- we again assume that if $X$ is infinite but $l<\infty$ then all $k_j < \infty$. Now the functions $F$ satisfying~\eqref{ESTPq} are precisely the individual homotheties $F(\bt) = c t_j$ for a single $j \in [p]$, with $c > 0$ and $k_j \geq \mathcal{N}$.
\end{enumerate}
\end{theorem}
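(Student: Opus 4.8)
The plan is to run the argument in parallel with that of Theorem~\ref{TmainTP}, exploiting two features special to the symmetric world. First, a kernel in $\STP^{(k)}_X$ restricted to any $k$ points is a symmetric totally positive matrix, hence positive definite; this makes available the Schur product theorem and the FitzGerald--Horn critical-exponent theorem for entrywise powers of positive semidefinite matrices, and it is these that are responsible for Hadamard products and ``small'' powers surviving to higher orders than in Theorem~\ref{TmainTP}. Second, a symmetric kernel on a set $X$ with $|X| = m$ only produces genuinely non-symmetric minors of size at most $m - 1$ (e.g.\ rows $x_1 < \cdots < x_r$ against columns $x_2 < \cdots < x_{r+1}$); since the power map $t \mapsto t^\alpha$ with $\alpha \geq 1$ preserves $\TP^{(3)}$ but (by Theorem~\ref{TmainTP}(3)--(4)) not $\TP^{(4)}$, this is precisely what shifts the thresholds upward by one: mixed power functions persist through $\mathcal{N} = 3$, non-homothety individual powers through $\mathcal{N} = 4$, and the latter only when $|X| = 4$, so that no non-symmetric $4 \times 4$ minor is in play.

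I would first dispose of the easy directions. For $\mathcal{N} = 1$ only $1\times 1$ minors occur, so positivity of $F$ is equivalent to \eqref{ESTPq}; an individual homothety $ct_j$ with $k_j \geq \mathcal{N}$ trivially satisfies \eqref{ESTPq}. For $\mathcal{N} = 2$ with $|X| = 2$, plugging in symmetric $2\times2$ matrices $\bigl(\begin{smallmatrix} a_j & b_j \\ b_j & d_j\end{smallmatrix}\bigr)$ shows \eqref{ESTPq} is equivalent to $F > 0$ together with $F(\mathbf a)F(\mathbf d) > F(\mathbf b)^2$ whenever $a_j d_j > b_j^2$ for all $j$; in one direction, positivity, joint monotonicity (take $\mathbf a = \mathbf b'$, $\mathbf d = \mathbf b$ for $\mathbf b < \mathbf b'$) and multiplicative mid-convexity (let $\mathbf b \uparrow \sqrt{\mathbf a \mathbf d}$) are immediate, with continuity following because $G(\mathbf u) := \log F(e^{u_1},\dots,e^{u_p})$ is then non-decreasing and midpoint-convex, hence convex; in the other direction, $b_j < \sqrt{a_j d_j}$ for all $j$ gives $F(\mathbf b)^2 < F(\sqrt{\mathbf a \mathbf d})^2 \leq F(\mathbf a) F(\mathbf d)$; and the dependence restriction for $k_j = 1$ comes from the off-diagonal entry of a $\STP^{(1)}_X$ kernel being unconstrained. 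For the remaining (mixed- or individual-power) cases, sufficiency reduces to: Hadamard products preserve $\TP^{(2)}$ (a two-line inequality); a lemma that the Hadamard product of two symmetric $\STP^{(3)}$ kernels is again $\STP^{(3)}$ (this is where symmetry is essential -- it fails for general $\TP^{(3)}$ by Theorem~\ref{TmainTP}(3)); and the fact that a symmetric $\TP^{(k)}$ kernel on $k$ points is positive definite, so that by FitzGerald--Horn the powers $t^\alpha$ with $\alpha \geq 1$ (resp.\ $\alpha \in \{1\}\cup[2,\infty)$) preserve symmetric $\STP^{(3)}$ (resp.\ $\STP^{(4)}_X$ with $|X| = 4$) once the non-principal minors are checked.

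The substance is the necessity, which I would prove by downward induction on $\mathcal{N}$: for $\mathcal{N} \geq 3$, replacing the target order $l$ by $\mathcal{N}-1$ (legitimate, since $\STP^{(l)}_X \subseteq \STP^{(\mathcal{N}-1)}_X$ and $\min\{|X|,\mathcal{N}-1\} = \mathcal{N}-1$) places us in an already-treated case, and one then imposes the constraints from the single extra order of minors. The two base cases are $\mathcal{N} = 2$, $|X| = 2$ (the characterization above, read in reverse), and $\mathcal{N} = 2$, $|X| \geq 3$, where symmetric $\STP^{(2)}$ kernels on three points yield the functional equation $F(\mathbf u)F(\mathbf v) = F(\mathbf u')F(\mathbf v')$ whenever $u_j v_j = u'_j v'_j$ for all $j$ (from the $2 \times 2$ minors, principal and non-principal, pushed to equality), which with positivity and monotonicity forces $F$ to be a constant times $\prod_j t_j^{\alpha_j}$ with $\alpha_j \geq 0$, the standard degeneracies then excluding $\boldsymbol\alpha = \mathbf 0$ and forcing $\alpha_j = 0$ for $k_j = 1$; this recovers Theorem~\ref{TmainTP}(2). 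At the higher orders one adds: $\alpha_j \geq 1$ for $j \in J$ at $\mathcal{N} = 3$ (a power $t^\alpha$, $0 < \alpha < 1$, already fails on a $3 \times 3$ symmetric totally positive matrix, by FitzGerald--Horn sharpness); $k_j \geq \mathcal{N}$ for $j \in J$ (a symmetric kernel that is $\STP^{(k_j)}_X$ but not $\STP^{(k_j+1)}_X$ produces a non-positive $(k_j+1) \times (k_j+1)$ minor after $t \mapsto t^{\alpha_j}$); and, at $\mathcal{N} = 4$, that $J$ is a singleton, $\alpha_j \in \{1\}\cup[2,\infty)$ when $|X| = 4$, and $\alpha_j = 1$ when $|X| \geq 5$, while $\alpha_j = 1$ at $\mathcal{N} \geq 5$. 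These last items are obtained by embedding the non-symmetric counterexamples of Theorem~\ref{TmainTP} -- no mixed products at order $3$; only homotheties at order $4$ -- into non-principal submatrices of carefully chosen symmetric kernels, namely generalized Vandermonde kernels and the kernel of Schoenberg and Karlin for finite $X$, and (strictly totally positive) P\'olya frequency functions when $X$ is infinite (the extra hypothesis ``$k_j < \infty$ when $X$ is infinite and $l < \infty$'' is inherited from Theorem~\ref{TmainTP}, being needed to have PF functions whose kernels are $\TP^{(k_j)}$ but not $\TP^{(k_j+1)}$).

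The main obstacle is precisely this last transfer. Since one cannot pad a $\TP$ kernel by zeros, I would need a \emph{symmetric completion lemma}: given a (possibly non-symmetric) $\TP^{(k)}$ kernel $L$ on $X_1 \times X_2$, build a symmetric $\STP^{(k)}_X$ kernel on a larger totally ordered set $X \supseteq X_1 \sqcup X_2$ whose straddling block $K[\bx;\by]$, for suitably interleaved $\bx$ from $X_1$ and $\by$ from $X_2$, reproduces $L$ while all its straddling minors of order $\leq k$ stay positive. Producing such completions simultaneously on finite and on infinite $X$, and compatibly with a prescribed finite order, is the delicate step, and is where the Schoenberg--Karlin kernel and the theory of (regular) strictly totally positive P\'olya frequency functions do the real work; the $|X| = 4$ versus $|X| \geq 5$ dichotomy, and the precise exponent set $\{1\}\cup[2,\infty)$ at $\mathcal{N} = 4$, then reduce to which sizes of non-principal minor such completions can be made to carry.
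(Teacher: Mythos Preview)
Your plan has the right high-level shape, but the central device you propose --- the ``symmetric completion lemma'' that would embed a non-symmetric $\TP^{(k)}$ block as a straddling submatrix of a larger symmetric $\STP^{(k)}$ kernel --- is both unnecessary and, as you yourself suspect, very hard to prove in the generality you need. The paper avoids this completely. The key observation is that once $F$ is known to be continuous on $(0,\infty)^p$ (via Proposition~\ref{Pmidconvex}), one can feed it \emph{symmetric $\TN$} test kernels approximated by symmetric $\TP$ ones: Whitney density for finite $X$, and the inflation-plus-Gaussian-convolution device of Proposition~\ref{Pinflate} and Theorem~\ref{Twhitney2} for infinite $X$ (this is exactly where the hypothesis ``$k_j < \infty$ when $X$ is infinite'' enters --- not for producing PF functions of intermediate order, as you say, but to make Proposition~\ref{Pinflate} applicable). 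This means the specific $\STN$ counterexamples already built in the proof of Theorem~\ref{TmainSTN} --- the matrices from \cite[Example~4.4]{FJS} forcing $|J|=1$ at $\mathcal{N}=4$, and from \cite[Example~5.10]{FJS} ruling out $\alpha > 1$ when $|X| \geq 5$ --- transfer directly. No completion is needed: the test kernels are symmetric from the start.

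A second, smaller gap: FitzGerald--Horn is the wrong tool on both sides. For sufficiency, the claim that $t^\alpha$ with $\alpha \geq 1$ preserves $\STP^{(3)}$ does not follow from positive-definiteness preservation --- one must also control the non-principal $2 \times 2$ and $3 \times 3$ minors, and FitzGerald--Horn says nothing about those. The paper instead cites \cite[Proposition~4.2 and Theorem~5.2]{FJS} (for $\mathcal{N}=3$) and \cite[Proposition~5.6]{FJS} (for $\mathcal{N}=4$, $|X|=4$), which are $\TN/\TP$-specific. For necessity, ``FitzGerald--Horn sharpness'' gives a PSD matrix whose $\alpha$th power is not PSD, but you need one that is symmetric $\TP$ and whose power is not $\TN$; the paper produces this via the Hankel kernel $K_H(x,x') = 1 + u_0^{x+x'}$ (approximated by $\STP$ Hankel kernels) combined with \cite[Corollary~3.2]{AK}, which gives $\alpha_{j_0} \notin (0,\mathcal{N}-2) \setminus \mathbb{Z}$ in one stroke. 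Likewise, $k_{j_0} \geq \mathcal{N}$ comes from the (already symmetric) Jain--Karlin--Schoenberg kernel $(1+xx')^\delta$, not from PF theory.
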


Each of the next four sections proves one theorem above.

\section{Totally nonnegative kernel transforms}

In this section, we characterize the total nonnegativity transforms by
showing Theorem~\ref{TmainTN}. We begin with a simple observation.
If $X' \subseteq X, Y' \subseteq Y$ are nonempty subsets, and $K : X' \times Y' \to
[0,\infty)$ is a kernel, define its extension $\widetilde{K}$ to $X
\times Y$ via ``padding by zeros'':
\begin{equation}\label{Epadding}
    \widetilde{K}(x,y) := \begin{cases}
K(x,y), \qquad &\text{if } (x,y) \in X' \times Y',\\
0, &\text{otherwise.}
\end{cases}
\end{equation}

\begin{lemma}\label{Lpadding}
Suppose $K : X' \times Y' \to [0,\infty)$ is $\TN^{(k)}$ for some $1 \leq
k \leq \infty$. Then so is $\widetilde{K}$. In particular, the padding
map $K \mapsto \widetilde{K}$ (injectively) embeds $\TN_{X' \times
Y'}^{(k)}$ into $\TN_{X \times Y}^{(k)}$. Similar statements hold if $Y' = X'$, $Y=X$, and $K$ is symmetric: then so is $\widetilde{K}$ and $\STN^{(k)}_{X'} \hookrightarrow \STN^{(k)}_X$.
\end{lemma}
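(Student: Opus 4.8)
The plan is to reduce the claim for $\widetilde{K}$ to the hypothesis on $K$ by a short case analysis on the chosen rows and columns. First I would fix an integer $1 \leq r \leq k$ and tuples $\bx = (x_1, \dots, x_r) \in \inc{X}{r}$ and $\by = (y_1, \dots, y_r) \in \inc{Y}{r}$, and look at the matrix $\widetilde{K}[\bx; \by]$. If some $x_i \notin X'$, then by the definition~\eqref{Epadding} the entire $i$-th row of $\widetilde{K}[\bx; \by]$ vanishes, so $\det \widetilde{K}[\bx; \by] = 0 \geq 0$; likewise if some $y_j \notin Y'$, an entire column vanishes. In the only remaining case, all $x_i$ lie in $X'$ and all $y_j$ lie in $Y'$, so $\widetilde{K}[\bx; \by] = K[\bx; \by]$, whose determinant is nonnegative because $K \in \TN^{(k)}_{X' \times Y'}$ and $r \leq k$. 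Hence $\widetilde{K} \in \TN^{(k)}_{X \times Y}$.

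For the embedding statement I would observe that the padding map $K \mapsto \widetilde{K}$ is injective, since restricting $\widetilde{K}$ to $X' \times Y'$ recovers $K$; together with the previous paragraph this yields $\TN^{(k)}_{X' \times Y'} \hookrightarrow \TN^{(k)}_{X \times Y}$.

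For the symmetric case with $Y' = X'$ and $Y = X$, I would first check that $\widetilde{K}$ is symmetric whenever $K$ is: for $(x, x') \in X \times X$, either both $(x, x')$ and $(x', x)$ lie in $X' \times X'$ — in which case $\widetilde{K}(x, x') = K(x, x') = K(x', x) = \widetilde{K}(x', x)$ — or neither does, in which case both values equal $0$. Combined with $\widetilde{K} \in \TN^{(k)}$ this gives $\widetilde{K} \in \STN^{(k)}_X$, and injectivity is inherited verbatim, so $\STN^{(k)}_{X'} \hookrightarrow \STN^{(k)}_X$.

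I do not expect any genuine obstacle here: the argument is uniform in $k$ (finite or infinite) and the only point needing a moment of care is the bookkeeping observation that a single index outside $X'$ (resp.\ $Y'$) already forces an entire zero row (resp.\ column) of $\widetilde{K}[\bx;\by]$, which is immediate from~\eqref{Epadding}.
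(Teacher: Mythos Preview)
Your proof is correct and follows essentially the same approach as the paper: both argue that any square submatrix $\widetilde{K}[\bx;\by]$ either has a zero row or column (when some index lies outside $X'$ or $Y'$) or coincides with a submatrix of $K$, and then deduce the symmetric case from this. You have simply written out in more detail what the paper states in a single sentence.
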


\begin{proof}
The first assertion holds because all square matrices $\widetilde{K}[
\bx; \by ]$ are either of the form $K[ \bx; \by ]$, or have a row or
column of zeros. The second statement follows from the first. In turn, these impliy their symmetric counterparts, by embedding $K$ into $\widetilde{K}$ as a ``principal submatrix''.
\end{proof}

We now proceed. The main novel step in this section is to show that in the ``non-singleton cases'', $F$ is a product of power functions and indicators of the positive semi-axis, up to scaling.

\begin{theorem}\label{Tpowers}
In the setting of Theorem~\ref{TmainTN}, if $2 \leq \mathcal{N} = \min\{
|X|, |Y|, l \} \leq \infty$, then every function satisfying~\eqref{ETNq}
is of the form
\[
F({\bf t}) = c \prod_{j \in J} t_j^{\alpha_j} \prod_{i \not\in J} {\bf 1}_{t_i > 0} \quad \text{with}\ c, \alpha_j\geq 0, \text{and}\ J\subseteq [p].
\]
\end{theorem}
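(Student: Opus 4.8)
The plan is to reduce the multivariate statement to the univariate classification in \cite{BGKP-TN} by "freezing all but one variable," and to use Lemma~\ref{Lpadding} to reduce from arbitrary $X,Y$ to the finite domains $\{1,\dots,\mathcal N\}$. First I would fix an index $j \in [p]$ and, for each choice of positive constants $c_i > 0$ ($i \ne j$), consider the tuple $\mathbf K$ whose $i$-th entry (for $i \ne j$) is the constant kernel $c_i$ and whose $j$-th entry is an arbitrary kernel $K \in \TN^{(k_j)}_{X \times Y}$. Since a constant positive kernel is $\TN^{(1)}$ but has vanishing $2\times 2$ minors, a constant kernel lies in $\TN^{(k_i)}$ only when we interpret it correctly; the cleaner route is to take the $i$-th entry ($i\neq j$) to be a genuine rank-one $\TN$ kernel such as $K_i(x,y) = u_i(x) v_i(y)$ with $u_i, v_i$ positive and, say, nondecreasing — this is $\TN$ of all orders. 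Then $F \circ \mathbf K$ must lie in $\TN^{(l)}_{X\times Y}$, which exhibits $t \mapsto F(\dots, t, \dots)$ (the $j$-th slot varying) as a univariate transform $\TN^{(k_j)} \to \TN^{(l)}$ after absorbing the rank-one factors; invoking \cite{BGKP-TN} in the one-variable case $\mathcal N \geq 2$ forces this univariate section to be, up to a positive constant, a single power $t^{\alpha_j}$ or the Heaviside $\mathbf 1_{t>0}$. Doing this for every $j$ shows $F$ is "separately" a power/indicator in each coordinate.

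The second and main step is to promote "separately a power or indicator in each variable" to the \emph{product} form $c\prod_{j\in J} t_j^{\alpha_j}\prod_{i\notin J}\mathbf 1_{t_i>0}$. For this I would test $F$ on a cleverly chosen tuple $\mathbf K$ of kernels on the two-element domain $\{1,2\}\times\{1,2\}$ (legitimate since $\mathcal N \geq 2$, then padded back up by Lemma~\ref{Lpadding}), so that $F\circ\mathbf K$ is a $2\times 2$ matrix whose determinant must be nonnegative. Choosing the $2\times 2$ matrices $K_j$ to have prescribed ratios of entries converts the $2\times 2$ determinant inequality into a multiplicative submodularity / log-supermultiplicativity condition on $F$: roughly, $F(\bt)F(\bt') \geq F(\bt \wedge \bt')\,F(\bt\vee\bt')$ or a Cauchy–Schwarz-type inequality in the entries, valid for all comparable arguments. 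Combined with the separate-power structure from Step~1, this functional inequality rigidly pins down the joint dependence: the exponents in different variables cannot interact, the multiplicative constant is common, and the support set $J$ (the variables entering with a genuine power rather than an indicator) is well-defined. One also has to track the boundary behavior at $t_i = 0$ to see that the non-$J$ variables contribute exactly $\mathbf 1_{t_i>0}$ and not, say, a jump to a different constant — this uses that $F$ maps into $[0,\infty)$ together with the determinant constraints forcing $F(\dots,0,\dots)$ to be $0$ or to match the interior limit.

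The step I expect to be the genuine obstacle is Step~2: extracting the \emph{product} structure from separate monomial behavior plus $2\times 2$ positivity. Separately being a power in each variable does not by itself imply a monomial — e.g. $t_1^{\alpha}t_2^{\beta}(1 + \epsilon \log(t_1/t_2)^2)$-type perturbations are separately nonmonomial but — and the point is that the total nonnegativity of all $2\times 2$ minors (a multiplicative convexity constraint) kills such cross terms. Making this rigorous requires choosing the test kernels so that the $2\times 2$ determinant condition is both necessary and sufficiently flexible to probe all pairs of arguments $(\bt, \bt')$, and then running a Cauchy-type functional-equation argument. A secondary subtlety is the bookkeeping when some $k_j < \mathcal N$: for those indices the univariate input in Step~1 is only $\TN^{(k_j)}$ with $k_j<\mathcal N$, and \cite{BGKP-TN} then forces the corresponding exponent to be $0$ (equivalently $j\notin J$, or $\alpha_j = 0$) — this is exactly the "moreover" clause in Theorem~\ref{TmainTN}(2), and it drops out of the same reduction without extra work. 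Finally, the converse direction (that all such $F$ do satisfy \eqref{ETNq}) is routine: products of powers are compositions of coordinate power maps with the Schur/Hadamard-type product of $\TN$ kernels, and multiplying by $\mathbf 1_{t_i>0}$ in a variable just restricts the domain, preserving all minors; so I would state it briefly and move on.
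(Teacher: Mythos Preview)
Your Step~1 is essentially the paper's opening move: reduce via Lemma~\ref{Lpadding} to $(\TN_{2\times 2})^p \to \TN_{2\times 2}$, freeze all but one variable at constants (the paper uses $t_i\mathbf 1_{2\times 2}$, not rank-one kernels), and invoke \cite[Theorem~3.3]{BGKP-TN} to get that each section $g_j(t)=F(t_1,\dots,t,\dots,t_p)$ is $c_j t^{\alpha_j}$ on $(0,\infty)$ with $c_j,\alpha_j\geq 0$.

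The gap is in your Step~2. You correctly flag ``separately a power $\Rightarrow$ product of powers'' as the obstacle, but your proposed attack---a multiplicative submodularity inequality from $2\times 2$ minors plus a vague Cauchy-type argument---is not the mechanism, and your candidate counterexample $t_1^\alpha t_2^\beta(1+\epsilon\log(t_1/t_2)^2)$ is not separately a power at all. The actual phenomenon is this: functions that are separately a power in each variable but \emph{not} a monomial do exist, e.g.\ $F(x,y)=\exp[K+\alpha_1\log x+\beta_1\log y+\gamma\log x\log y]$, but their sectional exponents $\alpha_1+\gamma\log y$ change sign as $y$ varies. The paper's Lemma~\ref{Lclaim} shows that the \emph{nonnegativity} of all sectional exponents---which you already have from Step~1 via the univariate classification---forces $\gamma=0$, hence the monomial form. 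The proof is a clean separation-of-variables argument by induction on $p$ (the $p=2$ case takes logs, separates variables to get $\alpha_{t_2}=\alpha_1+K\log t_2$, and then $\alpha_{t_2}\geq 0$ for all $t_2>0$ forces $K=0$). No further $2\times 2$ determinant inequalities are used at this stage.

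Your treatment of the boundary is also too optimistic. Once $F(\mathbf t)=c\mathbf t^{\boldsymbol\alpha}$ on the open orthant, the paper spends two further steps with several explicit $2\times 2$ test matrices to pin down $F$ on the faces: first showing $F$ vanishes whenever some $t_j=0$ with $\alpha_j>0$, and then a delicate case analysis (including an induction on the number of zero coordinates) to show that on the remaining faces $F$ is either the interior constant or zero according to whether the relevant coordinate carries $t_j^0$ or $\mathbf 1_{t_j>0}$. This is where the set $J$ and the indicator factors are actually determined; it does not drop out for free from continuity or limit considerations.
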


\begin{proof}
Choose any $X' \subseteq X$ and $Y' \subseteq Y$ of size $2$ each. 
If $F[-]$ satisfies~\eqref{ETNq} then it follows via Lemma~\ref{Lpadding} that
\[
F[-] : \TN_{2 \times 2} \times \cdots \times \TN_{2 \times 2} \to
\TN_{2 \times 2}.
\]

In particular, if one fixes all but one arguments $t_1, \dots, \widehat{t_j}, \dots, t_p > 0$, then the function
\begin{equation}\label{Egj}
g_j(t) := F(t_1, \dots, t_{j-1}, \ t, \ t_{j+1}, \dots, t_p)
\end{equation}
when applied entrywise, preserves $\TN_{2 \times 2}$ (the other matrices in applying $F[-]$ are $t_j {\bf 1}_{2 \times 2}$). In particular, by \cite[Theorem~3.3]{BGKP-TN}, the restriction $g_j|_{(0,\infty)}$ is a power function up to scaling, say $g_j(t) = c_j t^{\alpha_j}$ for some $c_j = c_j(t_1, \dots, \widehat{t_j}, \dots, t_p),\ \alpha_j = \alpha_j(t_1, \dots, \widehat{t_j}, \dots, t_p) \geq 0$.

We now establish the claim that every such map $F$ is of the required form, proceeding in steps.\medskip

\noindent
\textbf{Step 1:} We first show that $F$ is as asserted on the interior of the orthant:

\begin{lemma}\label{Lclaim}
Let $p \geq 1$ and a function $F : (0,\infty)^p \to \mathbb{R}$ not be identically zero. Suppose that for each fixed $t_1, \dots, \widehat{t_j}, \dots, t_p > 0$, the map $g_j|_{(0,\infty)}$ in~\eqref{Egj} is a power function $ c_j t^{\alpha_j}$ for some $c_j = c_j(t_1, \dots, \widehat{t_j}, \dots, t_p),\ \alpha_j = \alpha_j(t_1, \dots, \widehat{t_j}, \dots, t_p) \geq 0$. Then $F(\bt) = c \bt^{\boldsymbol \alpha}$ is a mixed power function (up to scaling) for all $t_j > 0\ \forall j$, for some exponents ${\boldsymbol \alpha} \geq {\bf 0}$ and scalar $c = F(1,\dots,1) > 0$.
\end{lemma}

In other words, the maps $c_j, \alpha_j$ are constant for each $j \in [p]$, and $c_1 = \cdots = c_p$.

\begin{remark}
As the proof shows,
(a)~this result is significantly involved than the $p=1$ case (more precisely, than the derivation of the $p=1$ case from $\TN_{2 \times 2}$ preservation).
Moreover,
(b)~the non-negativity -- more precisely, the invariance of signs -- of the exponents $\alpha_j$ is crucial. Otherwise the result already fails for $p=2$ -- because the conditions (for every choice of fixed scalars $x_0, y_0 > 0$)
\[
F(x,y_0) = c_{y_0} x^{\alpha_{y_0}}, \quad F(x_0,y) = d_{x_0} y^{\beta_{x_0}}, \qquad \forall x,y>0
\]
are satisfied by a larger family than the mixed power functions $c x^\alpha y^\beta$:
\[
F(x,y) = \exp \left[ K + \alpha_1 \log(x) + \beta_1 \log(y) + \gamma \log(x) \log(y) \right].
\]
Indeed, in this case one has:
\[
c_{y_0} = e^K y_0^{\beta_1}, \quad \alpha_{y_0} = \alpha_1 + \gamma \log(y_0); \qquad
d_{x_0} = e^K x_0^{\alpha_1}, \quad \beta_{x_0} = \beta_1 + \gamma \log(x_0).
\]
\end{remark}

\begin{proof}[Proof of Lemma~\ref{Lclaim}]
We proceed by (strong) induction on the number of variables $p$. The base case of $p = 1$ is a tautology. The $p=2$ case is used in the general induction step, so we show it next. For each $t_2 > 0$, by the $p=1$ case $F(t_1, t_2) = c(t_2) t_1^{\alpha_{t_2}}$ for all $t_1, t_2 > 0$, for some scalars $c(t_2), \alpha_{t_2} \geq 0$. Setting $t_1 = 1$ yields $c(t_2) = F(1,t_2)$. Switching the roles of $t_1$ and $t_2$, we similarly obtain:
\begin{equation}\label{Ep20}
F(1,t_2) t_1^{\alpha_{t_2}} = F(t_1, t_2) = F(t_1,1) t_2^{\beta_{t_1}}, \qquad \forall t_1, t_2 > 0
\end{equation}
for a scalar $\beta_{t_1} \geq 0$. Setting $t_2= 1$ in the first equality and $t_1 = 1$ in the second yields:
\[
F(t_1,1) = F(1,1) t_1^{\alpha_1}, \qquad
F(1,t_2) = F(1,1) t_2^{\beta_1},
\]
which combined with the preceding equation yields:
\begin{equation}\label{Ep2}
F(t_1,t_2) = F(1,1) t_1^{\alpha_1} t_2^{\beta_{t_1}} = F(1,1) t_2^{\beta_1} t_1^{\alpha_{t_2}}, \qquad \forall t_1, t_2 > 0.
\end{equation}

Now if $F(1,1) = 0$ then $F \equiv 0$, which contradicts the hypotheses. Else divide by $F(1,1)$ and take logs to obtain
\[
\frac{\beta_{t_1} - \beta_1}{\log t_1} = \frac{\alpha_{t_2} - \alpha_1}{\log t_2}, \qquad \forall t_1, t_2 \in (0,\infty) \setminus \{ 1 \}.
\]
Since the LHS and RHS are functions of $t_1, t_2$ individually, both sides must equal a constant, say $K$. Thus,
\[
\beta_{t_1} = \beta_1 + K \log t_1, \qquad \alpha_{t_2} = \alpha_1 + K \log t_2, \qquad \forall t_1, t_2 > 0.
\]
At the same time, the hypotheses imply $\beta_{t_1}, \alpha_{t_2} \geq 0$ for all $t_1, t_2 > 0$. This is possible only if $K = 0$, i.e.\ $\beta_{t_1} \equiv \beta_1, \alpha_{t_2} \equiv \alpha_1$. Now~\eqref{Ep2} yields Lemma~\ref{Lclaim} for $p=2$:
\[
F(t_1, t_2) = F(1,1) t_1^{\alpha_1} t_2^{\beta_1}, \qquad \forall t_1, t_2 > 0.
\]

We now proceed to the induction step for $F : (0,\infty)^p \to (0,\infty)$; thus, we know the result for functions of $1, 2, \dots, p-1$ variables. By the induction hypothesis, for each $t \in (0,\infty)$, the functions
\begin{align*}
& f_1 : (s_1, \dots, s_{p-1}) \mapsto F(t,\ s_1, \dots, s_{p-1})
& f_p : (s_1, \dots, s_{p-1}) \mapsto F(s_1, \dots, s_{p-1}, \ t)
\end{align*}
must be of the form
\[
f_1({\bf s}) = d(t) \prod_{1 < j \leq p} s_j^{\beta_j(t)}, \qquad
f_p({\bf s}) = c(t) \prod_{1 \leq j < p} s_j^{\alpha_j(t)}, \qquad \forall {\bf s} \in (0,\infty)^{p-1}, \ t > 0
\]
for some functions $c, d, \alpha_j, \beta_j \geq 0$. As above
$d(t) = F(t, {\bf 1}_{p-1})$ and
$c(t) = F({\bf 1}_{p-1}, t)$, so akin to~\eqref{Ep2}:
\begin{equation}\label{Ep}
F(t_1, {\bf 1}_{p-1}) \prod_{1 < j \leq p} t_j^{\beta_j(t_1)} = F(\bt) = 
F({\bf 1}_{p-1}, t_p) \prod_{1 \leq j < p} t_j^{\alpha_j(t_p)}, \qquad \forall \bt \in (0,\infty)^p. 
\end{equation}

The first claim is that the ``constants'' $F(t_1, {\bf 1}_{p-1}), F({\bf 1}_{p-1}, t_p)$ never vanish. Indeed, if say $F({\bf 1}_{p-1}, t_p) = 0$ for some $t_p>0$, then for every $t_1>0$, \eqref{Ep} implies $F(t_1, {\bf 1}_{p-1}) = 0$, whence again by~\eqref{Ep}, $F \equiv 0$ (which contradicts the hypotheses).

But now we can rewrite~\eqref{Ep} as:
\[
\frac{F(t_1, {\bf 1}_{p-1})}{F({\bf 1}_{p-1}, t_p)} t_1^{-\alpha_1(t_p)} t_p^{\beta_p(t_1)} = \prod_{1<j<p} t_j^{\alpha_j(t_p) - \beta_j(t_1)}, \qquad \forall \bt \in (0,\infty)^p.
\]
As the left-hand side does not depend on $t_2, \dots, t_{p-1}$, it follows that $\alpha_j(t_p) = \beta_j(t_1) =: \alpha_j$, say, for all scalars $t_1, t_p > 0$ and indices $1<j<p$. Thus,~\eqref{Ep} reduces to
\begin{equation}\label{Epp}
F(t_1, {\bf 1}_{p-1}) t_p^{\beta_p(t_1)} \prod_{1 < j < p} t_j^{\alpha_j} = F(\bt) = 
F({\bf 1}_{p-1}, t_p) t_1^{\alpha_1(t_p)} \prod_{1 < j < p} t_j^{\alpha_j},
\qquad \forall \bt \in (0,\infty)^p.
\end{equation}

Finally, define $g(t_1, t_p) := F(t_1, {\bf 1}_{p-2}, t_p)$ on $(0,\infty)^2$; then the preceding equation becomes
\[
g(1,t_p) t_1^{\alpha_1(t_p)} = g(t_1, t_p) = g(t_1,1) t_p^{\beta_p(t_1)}, \qquad \forall t_1, t_p > 0
\]
with $\alpha_1, \beta_p \geq 0$. But this is precisely~\eqref{Ep20}; now the analysis for $p=2$ yields
\[
g(t_1,t_p) = g(1,1) t_1^{\alpha_1} t_p^{\beta_p}, \qquad \forall t_1, t_p > 0.
\]
Finally, using these last two equations and~\eqref{Epp}, we have:
\begin{align*}
F(\bt) = &\ g(1,t_p) t_1^{\alpha_1(t_p)} \prod_{1<j<p} t_j^{\alpha_j} = g(t_1, t_p) \prod_{1<j<p} t_j^{\alpha_j} = g(1,1) t_1^{\alpha_1} t_p^{\beta_p} \prod_{1<j<p} t_j^{\alpha_j}\\
= &\ F({\bf 1}_p) \prod_{j=1}^p t_j^{\alpha_j}, \qquad \forall \bt \in (0,\infty)^p \quad (\text{where }
\alpha_p := \beta_p). \qedhere
\end{align*}
\end{proof}

\noindent \textbf{Step 2:} \textit{Extending the positive powers to the boundary.}

One can possibly try to prove the full result also by induction on $p$, but we proceed directly. 
Consider $F$ on $[0,\infty)^p$. If $F = 0$ at a single point in $(0,\infty)^p$ then $F \equiv 0$ on $(0,\infty)^p$ by Lemma~\ref{Lclaim}. But then, given any $\bt \in [0,\infty)^p$, we evaluate $F$ at the $\TN_{2 \times 2}$ matrices
\begin{equation}\label{Estep21}
A_j := \begin{pmatrix} t_j + 1 & t_j \\ t_j & t_j \end{pmatrix}, \qquad j \in [p].
\end{equation}
By the hypotheses, $0 \leq \det F[{\bf A}]  = F(\bt + {\bf 1}_p) F(\bt) - F(\bt)^2 = - F(\bt)^2$, and so $F \equiv 0$ on $[0,\infty)^p$.

Thus, we henceforth suppose $F(\bt) = c \bt^{\boldsymbol \alpha}$ on $(0,\infty)^p$, with $c > 0$ and ${\boldsymbol \alpha} \geq {\bf 0}$.
By Lemma~\ref{Lclaim}, partition $[p] = J_0 \sqcup J_+$, with
\begin{equation}
J_0 := \{ j \in [p] : \alpha_j = 0 \}, \qquad J_+ := \{ j \in [p] : \alpha_j > 0 \}.
\end{equation}

We need to compute $F$ on the boundary of the orthant $[0,\infty)^p$. We begin by showing that $F$ vanishes whenever $J_+ \neq \emptyset$ and $t_{j_0} = 0$ for some $j_0 \in J_+$ (notice, this is consistent with the assertion of the theorem). To show this, assume without loss of generality that $j_0 = 1$ (for expositional convenience), and choose any $t_j \in [0,\infty)$ for all $j>1$. Now evaluate $F$ at the sequence of $\TN_{2 \times 2}$ matrices
\begin{equation}\label{Estep22}
A_1^{(n)} := \frac{1}{n} \Id_2, \qquad A_j^{(n)} := \frac{1}{n} \Id_2 + t_j {\bf 1}_{2 \times 2}\ \ (j>1).
\end{equation}
Then with $t_1 = 0$,
\[
0 \leq \det F[{\bf A}^{(n)}] = F(\bt + \frac{1}{n} {\bf 1}_p)^2 - F(\bt)^2 = c^2 n^{-2 \alpha_1} \prod_{j>1} (t_j + \frac{1}{n})^{2\alpha_j} - F(\bt)^2, \qquad \forall n \geq 1.
\]
Letting $n \to \infty$ and since $\alpha_1 > 0$, we obtain $F(\bt) = 0$ if $j_0 \in J_+$ and $t_{j_0} = 0$.\medskip

\noindent \textbf{Step 3:} \textit{Extending the zero powers to the boundary.}

It remains to compute $F$ at all other points (regardless of whether or not $J_+$ is empty), i.e.\ at
\[
\bt' = (t_1, \dots, t_p) \in (0,\infty)^{J_+} \times \left( [0,\infty)^{J_0} \setminus (0,\infty)^{J_0} \right).
\]
(In particular, we are already done if $J_0$ is empty, so henceforth we suppose $J_0 \neq \emptyset$.)

We begin by noting that for each $j_0 \in J_0$, the function
\[
g_{j_0} : [0,\infty) \to [0,\infty); \qquad
s \mapsto F({\bf 1}_{j_0-1}, \ s, \ {\bf 1}_{p-j_0})
\]
entrywise preserves $\TN_{2 \times 2}$ and is of the form $c \cdot s^0$ for $s>0$, by Lemma~\ref{Lclaim}. Hence by the $p=1$ case in \cite[Theorem~3.3]{BGKP-TN}, the scalar $\delta_{j_0} := c^{-1} g_{j_0}(0)$ equals $0$ or $1$, depending on if $g_{j_0}$ is Heaviside or constant, respectively.
Setting $J_{00} := \{ j_0 \in J_0 : \delta_{j_0} = 0 \}$, it suffices to show that
\begin{equation}\label{Etoshow}
F(\bt') = c \cdot \prod_{j \in J_+} t_j^{\alpha_j} \cdot \prod_{j_0 \in J_{00}\, :\, t_{j_0} = 0} \delta_{j_0}, \qquad
\forall \bt' \in (0,\infty)^{J_+} \times \left( [0,\infty)^{J_0} \setminus (0,\infty)^{J_0} \right).
\end{equation}

Noting that the right-hand side vanishes if and only if the second product is not an empty one, we work in two cases.

\begin{enumerate}
\item First assume there is at least one zero entry among $\{ t_j : j \in J_{00} \}$ -- say there are precisely $l>0$ such entries, indexed by $j_1, \dots, j_l \in J_{00}$. We need to show that for all
\[
{\bf r} = (r_a) \in (0,\infty)^{J_{00} \setminus \{ j_1, \dots, j_l \}}, \qquad
{\bf s} = (s_b) \in [0,\infty)^{J_0 \setminus J_{00}}, \qquad
{\bf t}'' = (t_d) \in (0,\infty)^{J_+},
\]
and writing the arguments of $F$ in the following re-indexed order:
\[
j_1, \dots, j_l; \quad J_{00} \setminus \{ j_1, \dots, j_l \}; \quad J_0 \setminus J_{00}; \quad J_+,
\]
we have that
$F({\bf 0}_l; \ {\bf r}; \ {\bf s}; \ {\bf t}'') = 0$.

For this, choose the $\TN_{2 \times 2}$ matrices
\begin{align}\label{Estep31}
\begin{aligned}
A_{j_1} := &\ \begin{pmatrix} 1 & 0 \\ 1 & 0 \end{pmatrix}; \qquad
A_{j_2}  = \cdots = A_{j_l} := \begin{pmatrix} 1 & 0 \\ 1 & 1 \end{pmatrix};\\
A_a := &\ \begin{pmatrix} r_a & r_a \\ 1 & 1 \end{pmatrix}, \ a \in J_{00} \setminus \{ j_1, \dots, j_l \}; \\
A_b := &\ \begin{pmatrix} s_b & s_b \\ 1 & 1 \end{pmatrix},\ b \in J_0 \setminus J_{00}; \\
A_d := &\ \begin{pmatrix} t_d & t_d \\ 1 & 1 \end{pmatrix}, \ d \in J_+.
\end{aligned}
\end{align}
Then $F[ {\bf A} ] \in \TN_{2 \times 2}$, so since $j_1 \in J_{00}$,
\[
0 \leq \det F[ {\bf A} ] = F({\bf 1}_l; {\bf r}; {\bf s}; {\bf t}'') F(0, {\bf 1}_{p-1}) - F({\bf 0}_l; {\bf r}; {\bf s}; {\bf t}'') F({\bf 1}_p) = - c F({\bf 0}_l; {\bf r}; {\bf s}; {\bf t}'').
\]
Since $F \geq 0$, this implies $F({\bf 0}_l; {\bf r}; {\bf s}; {\bf t}'') = 0$, which concludes the proof in this case.

\item Finally, suppose $t_{j_0} > 0$ for all $j_0 \in J_{00}$. For expositional convenience, we also assume in this case, without loss of generality, that $J_0 \setminus J_{00} = \{ 1, \dots, p_0 \}$ for some $1 \leq p_0 \leq p$. We are to show that
$F(\bt') = c \prod_{j \in J_+} t_j^{\alpha_j}$. By relabeling coordinates if necessary, we are to show
\begin{equation}\label{Eclaim1}
F({\bf 0}_l, \bt'') = c \prod_{j \in J_+} t_j^{\alpha_j}, \qquad \forall 0 \leq l \leq p_0, \quad \forall t_j > 0, j > l
\end{equation}
where $\bt'' := (t_{l+1}, \dots, t_p)$.

We proceed by induction on $l \geq 0$, with the base case of $l=0$ shown in Lemma~\ref{Lclaim}. We next show the $l=1$ case: consider the $\TN_{2 \times 2}$ matrices
\begin{equation}\label{Estep321}
A_1 := \begin{pmatrix} 1 & 0 \\ 1 & 0 \end{pmatrix}, \qquad
A_j := \begin{pmatrix} t_j & t_j \\ 1 & 1 \end{pmatrix},\ j>1.
\end{equation}

Then $F[ {\bf A} ] \in \TN_{2 \times 2}$, so using the $l=0$ case and that $J_{00} = \emptyset$,
\[
0 \leq \det F[ {\bf A} ] = F(1, \bt'') F(0, {\bf 1}_{p-1}) - F(0, \bt'') F({\bf 1}_p) = c^2 \prod_{j \in J_+} t_j^{\alpha_j} - c F(0,\bt'').
\]
If we instead change $A_1 := \begin{pmatrix} 0 & 1 \\ 0 & 1 \end{pmatrix}$, then we similarly get
\[
0 \leq \det F[ {\bf A} ] = c F(0,\bt'') - c^2 \prod_{j \in J_+} t_j^{\alpha_j}.
\]
These together imply the $l=1$ case.

Finally, for the induction step, consider the $\TN_{2 \times 2}$ matrices
\begin{equation}\label{Estep322}
A_1 := \begin{pmatrix} 1 & 0 \\ 1 & 0 \end{pmatrix}; \qquad
A_2  = \cdots = A_l := \begin{pmatrix} 1 & 1 \\ 0 & 0 \end{pmatrix}; \qquad
A_j := t_j {\bf 1}_{2 \times 2},\ j>l.
\end{equation}
Proceeding as above, and using the induction step as well as the $l=0,1$ cases,
\begin{align*}
0 \leq \det F[{\bf A}] = &\ F({\bf 1}_l, \bt'') F({\bf 0}_l, \bt'') - F(0, {\bf 1}_{l-1}, \bt'') F(1, {\bf 0}_{l-1}, \bt'')\\
= &\ c \prod_{j \in J_+} t_j^{\alpha_j} \left( F({\bf 0}_l, \bt'') - c \prod_{j \in J_+} t_j^{\alpha_j} \right).
\end{align*}
We also get the reversed inequality by exchanging the columns of $A_1$. This shows the induction step, and concludes the proof. \qedhere
\end{enumerate}
\end{proof}

With Theorem~\ref{Tpowers} in hand, we can proceed.

\begin{proof}[Proof of Theorem~\ref{TmainTN}]
The result is obvious if $\mathcal{N} = 1$, as one only works with
$1 \times 1$ minors -- i.e.\ function values.
\begin{enumerate}
\setcounter{enumi}{1}
\item Suppose $\mathcal{N} = 2$. By Theorem~\ref{Tpowers}, every
preserver must be of the form
\[
F({\bf t}) = c \prod_{j \in J} t_j^{\alpha_j} \prod_{i \not\in J} {\bf 1}_{t_i > 0}, 
\]
with all $\alpha_j \geq 0$ and $J\subseteq [p]$. Moreover, suppose $c>0$ and some $k_{j_0} = 1$.
Fix points $x_1 < x_2$ in $X$ and $y_1 < y_2$ in $Y$, and consider the
test kernels evaluated at these pairs of points to be the matrices
\[
K_{j_0} = \begin{pmatrix} 0 & 1 \\ 1 & 0 \end{pmatrix}, \qquad
K_j = \begin{pmatrix} 1 & 1 \\ 1 & 1 \end{pmatrix} \ \forall j \neq j_0.
\]
Let $\widetilde{K}_j$ be the respective paddings by $0$ as in~\eqref{Epadding}; by
Lemma~\ref{Lpadding}, $\widetilde{K}_{j_0} \in \TN_{X \times Y}^{(1)}
\setminus \TN_{X \times Y}^{(2)}$ and $\widetilde{K}_j \in \TN_{X \times
Y}^{(\infty)}$ for all other $j$. We see that $j_{0}\in J$. Otherwise, from the hypothesis that $F \circ {\bf \widetilde{K}}$ is $\TN^{(2)}$, we get $(F \circ {\bf \widetilde{K}})[ \bx; \by ] = K_{j_0}\in \TN^{(2)}$, which is clearly absurd. Therefore, $j_{0}\in J$ and in that case 
\[
0 \leq \det (F \circ {\bf \widetilde{K}})[ \bx; \by ] =
c^2(0^{2\alpha_{j_0}} - 1).
\]
This implies $\alpha_{j_0} = 0$, as desired.

We take a short detour to prove that $\TN_{2\times 2}$ is invariant under the Schur product. Indeed, if we let $A= (a_{ij})_{i,j=1}^{2},B = (b_{ij})_{i,j=1}^{2}$ be any $\TN_{2\times 2}$ matrices then the inequality
$a_{11}b_{11}a_{22}b_{22}\geq a_{12}b_{12}a_{21}b_{21}$ holds, since $a_{11}a_{22} \geq a_{12}a_{21}$ and $b_{11}b_{22} \geq b_{12}b_{21}$, by assumption. Therefore,  $A\circ B \in \TN_{2\times 2}$.

Returning to the proof, conversely (still with $\mathcal{N} = 2$) suppose $F$ is of the given
form, and $K_j \in \TN_{X \times Y}^{(k_j)}$ for all $j$. As $F \geq 0$,
$F \circ {\bf K}$ is $\TN^{(1)}$. Now fix $\bx \in \inc{X}{2}$ and $\by
\in \inc{Y}{2}$; if $k_j \geq 2$ then $K_{j}(x_1,y_1) K_{j}(x_2,y_2) \geq
K_{j}(x_1,y_2) K_{j}(x_2,y_1)$. Raising both sides to the $\alpha_j$th power, and
multiplying over all $j$ with $k_j \geq 2$ (and also by $c \geq 0$), we
see that $\det K_{j}^{\circ{\alpha_j}}[ \bx; \by ] \geq 0$. Moreover, ${\bf 1}_{t_{j}>0}[(K(x_{i}, y_{j}))_{i,j=1}^{2}] \in \TN$. Combining this with the fact that $\TN_{2\times 2}$ is invariant under the Schur product concludes the
proof for $\mathcal{N} = 2$, except the case when all $k_j = 1$. But in
this case $F$ of the claimed form is simply a nonnegative constant map,
and this sends ${\bf K}$ to a $\TN$ kernel.

\item Now let $\mathcal{N} \geq 3$ and $F$ be nonconstant. As above, by
Theorem~\ref{Tpowers} $F$ is of the form

\begin{equation}\label{Eindicator}
F({\bf t}) = c \prod_{j \in J} t_j^{\alpha_j} \prod_{i \not\in J} {\bf 1}_{t_i > 0} 
\end{equation}
with all $\alpha_j \geq 0$ and $J\subseteq [p]$. Suppose for
contradiction that at least two powers are positive, say $\alpha_1,
\alpha_2 > 0$ (and $c>0$). Fix triples $\bx \in \inc{X}{3}$ and $\by \in
\inc{Y}{3}$, and consider the test kernels evaluated at these triples to
be the $\TN$ matrices
\[
K_1 = \begin{pmatrix} 1 & 1 & 0 \\ 1 & 1 & 1 \\ 1 & 1 & 1 \end{pmatrix},
\qquad K_2 = K_1^T, \qquad K_j = \begin{pmatrix} 1 & 1 & 1 \\ 1 & 1 & 1
\\ 1 & 1 & 1 \end{pmatrix} \ \forall 3 \leq j \leq p.
\]
(This example is taken from \cite{FJS}.)
Let $\widetilde{K}_j$ be the respective paddings by $0$; by
Lemma~\ref{Lpadding}, $\widetilde{K}_j \in \TN_{X \times Y}^{(\infty)}$
for all $j$. By hypothesis, $F \circ {\bf \widetilde{K}}$ is $\TN^{(3)}$,
and so
\begin{equation}\label{EFJS}
0 \leq \det (F \circ {\bf K})[ \bx; \by ] = \det c \begin{pmatrix}
1 & 1 & 0 \\ 1 & 1 & 1 \\ 0 & 1 & 1 \end{pmatrix} = -c^3.
\end{equation}
This contradiction shows that there is at most one $j_0 \in [p]$ with $\alpha_{j_0} >
0$, so that the dependence on the other variables $t_j$ in~\eqref{Eindicator} is through their indicator ${\bf 1}_{t_j > 0}$ or their zeroth power. We show that it is never the former, which proves that $J = [p]$, such a $j_0$ exists, and all other $\alpha_j = 0$ (as $F$ is nonconstant). Thus, suppose to the contrary that $i\in J^{c}$, and let 
\begin{equation}\label{Esqrt2}
K_{i} = \begin{pmatrix}
1 & 1/\sqrt{2} & 0 \\ 1/\sqrt{2} & 1 & 1/\sqrt{2} \\ 0 & 1/\sqrt{2} & 1
\end{pmatrix}, \qquad K_j = \begin{pmatrix} 1 & 1 & 1 \\ 1 & 1 & 1
\\ 1 & 1 & 1 \end{pmatrix} \ \forall  j \neq i
\end{equation}
be $\TN$ kernels evaluated at $[\bx; \by]$.
  Applying ${\bf 1}_{t_i>0}$ to the padding $\widetilde{K}_i$ (equivalently, applying $F[-]$ to $\widetilde{\bf K}$) yields at $[\bx; \by]$  the matrix in \eqref{EFJS} with negative determinant. Hence $J^{c}=\emptyset$, which yields (as above) a unique $j_0$ such that $\alpha_{j_0}>0$.\footnote{We take the occasion to point out a notational typo in the corresponding univariate proof in \cite{BGKP-TN}. Specifically, on pp.~96 of \textit{loc.\ cit.}, in the paragraph before Equation~(3.5), the authors ``conclude by [the $2 \times 2$ case, via padding by zeros] that [the non-constant map] $F(x) = c x^\alpha$ for some $c > 0$ and $\alpha \geq 0$.'' Here, since $F$ is non-constant, when the authors wrote $c x^0$ (i.e.\ $\alpha=0$) they meant the Heaviside indicator $c {\bf 1}_{x>0}$.
 
The same notational typo occurs in \cite[Theorem~3.7(3)(b)]{BGKP-TN}: the authors mean the zeroth power not to be the constant function $1$, but the Heaviside indicator ${\bf 1}_{x>0}$. The reason both of these are notational typos is because the authors declare in the lines preceding \cite[Theorem~3.3]{BGKP-TN} that $0^0 := 1$ -- so $x^0$ should then be ${\bf 1}_{x \geq 0}$, but the authors take it to mean ${\bf 1}_{x>0}$ in the two instances cited.}

We next claim $k_{j_0} \geq \mathcal{N} = 3$. 
Suppose not; then we work with the test kernels evaluated at $\bx, \by$
to be $K_j = {\bf 1}_{3 \times 3}$ for $j \neq j_0$, and $K_{j_0}$ to be the 0-1 matrix in~\eqref{EFJS} (without the $c$).
By Lemma~\ref{Lpadding}, the paddings $\widetilde{K}_j \in \TN_{X \times
Y}^{(k_j)}$ for all $j$, so $F \circ {\bf \widetilde{K}}$ is $\TN^{(3)}$.
But then
\[
0 \leq \det (F \circ {\bf \widetilde{K}})[ \bx; \by ] = \det c K_{j_0} =
-c^3,
\]
which is false, and yields $k_{j_0} \geq \mathcal{N}$. Finally, let the test kernel $K_{j_0}$ evaluated at $\bx, \by$
be the $\TN$ matrix denoted in \eqref{Esqrt2} by $K_i$. As computed explicitly in \cite[Section~3]{BGKP-TN}, $\det K_{j_0}[ \bx; \by]^{\circ \alpha} = 1 - 2^{1-\alpha} < 0$ for $\alpha \in (0,1)$. Thus $\alpha_{j_0} \geq 1$ if $\mathcal{N} \geq 3$.

Conversely, say $\mathcal{N} = 3$ and $F(\bt) = c t_{j_0}^{\alpha_{j_0}}$ is of the desired
form, and $K_{j_0} \in \TN_{X \times Y}^{(k_{j_0})}$. Since we are only
considering $3 \times 3$ minors, it suffices to show that $\TN_{3 \times
3}$ is closed under taking entrywise $\alpha$th powers for $\alpha \geq
1$. But this is precisely \cite[Theorem~5.2]{FJS}.

\item By the preceding case, $F(\bt) = c t_{j_0}^{\alpha_{j_0}}$ for
some $\alpha_{j_0} \geq 1$. Using the padded test kernel
$\widetilde{K}_{j_0}$, where $K_{j_0}$ is the $4 \times 4$ $\TP$ matrix in
\cite[Example 5.8]{FJS} (for suitable $x,\epsilon > 0$), we have $\alpha_{j_0} = 1$.

We next show that $k_{j_0} \geq \mathcal{N}$. Suppose not, and let $X' = Y' = \{  x_1 < x_2 < \cdots < x_{\mathcal{N}} \} \subset (0,\infty)$ if $\mathcal{N}< \infty$, while if $\mathcal{N} = \infty$ we choose $X' = Y'$ to be any increasing sequence $\{ x_{n} : n\geq 1 \}$ of positive real numbers. Now let $K: X' \times Y' \to (0,\infty)$ be the kernel given by the ``Jain matrix'' \cite{Jain, TJ} $(1+ x_i x_j)_{i,j=1}^{\mathcal{N}}$, and $\widetilde{K}$ be its padding by zeros \eqref{Epadding}.

There are two cases. If $\mathcal{N}< \infty$, choose any $\alpha\in (\mathcal{N}-3, \mathcal{N}-2)$; by \cite[Theorem~C]{AK}, it follows that the matrix $K^{\circ \alpha} = K^\alpha[ X'; Y' ]$ is $\TN^{(k_{j_0})}$ but not $\TN_{X\times Y}^{(\mathcal{N})}$. Otherwise $1 \leq k_{j_0}< \infty = \mathcal{N}$; now let $\alpha \in (k_{j_0}-1, k_{j_0})$. Then \cite[Theorem~C]{AK} implies that $K^{\circ \alpha}$ is $\TN^{(k_{j_0})}$ but not $\TN$. In either case, letting the test kernel $K_{j_0} = \widetilde{K}^{\alpha} \in \TN_{X \times Y}^{(k_{j_0})}$ and all other $K_j := {\bf 0}_{X \times Y}$ provides the desired contradiction.

Clearly, constant functions and individual homotheties $F(\bt) = c, ct_{j}$ (when $k_{j}\geq \mathcal{N}$) for $c \geq 0$ send $K_{j} \in \TN_{X\times Y}^{(k_{j})}$ to a $\TN_{X\times Y}^{(l)}$ kernel. This shows the converse. \qedhere
 \end{enumerate}
\end{proof}

\section{Totally positive kernel transforms}

In this section, we prove Theorem~\ref{TmainTP}. We begin by noting some hurdles in proving these assertions via similar strategies to the previous section: First, ``padding by zeros" does not preserve (strict) total positivity, so embedding tools like Lemma~\ref{Lpadding} are lost. To remedy this, we need approximation results like \textit{Whitney's density theorem} \cite{Whitney}, which says that $\TP_{m \times n}^{(p)}$ matrices are dense in $TN_{m \times n}^{(p)}$ matrices for positive integers $m,n,p$. A second subtlety arises when we deal with (infinite) kernels rather than just matrices. For this purpose, we refer to \cite[Theorem 1.2]{BGKP-TN}, which gives the locally uniform approximation of $\TN^{(p)}$ kernels by $\TP^{(p)}$ kernels on the real line and its subsets, via discretized Gaussian convolution:

\begin{theorem}\label{Twhitney2}
Given nonempty subsets $X$ and $Y$ of $\mathbb{R}$, and an integer $k \in [1,\infty)$,
any bounded $\TN^{(k)}$ kernel $K : X \times Y \to \mathbb{R}$ can be
approximated locally uniformly at points of continuity in the interior
of $X \times Y$ by a sequence of $\TP^{(k)}$ kernels on $X \times Y$.
Furthermore, if $Y = X$ and $K$ is symmetric, then the kernels in the approximating sequence may be taken to be symmetric.
\end{theorem}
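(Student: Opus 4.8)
The plan is to take the first (non-symmetric) assertion as known -- it is \cite[Theorem~1.2]{BGKP-TN} -- and to verify that the construction used there can be carried out so as to preserve symmetry. Recall that that construction, for a bounded $\TN^{(k)}$ kernel $K$ on $X\times Y$ with $X,Y\subseteq\mathbb{R}$, proceeds in two stages: first one extends $K$ to a bounded $\TN^{(k)}$ kernel $\overline{K}$ on $\mathbb{R}\times\mathbb{R}$ (for instance by composing with the non-decreasing nearest-point retractions $r_X:\mathbb{R}\to\overline{X}$ and $r_Y:\mathbb{R}\to\overline{Y}$, which preserves the signs of all minors of order $\leq k$); and then one smooths, forming a discretized two-sided Gaussian convolution
\[
K_t(x,y) \ := \ \sum_{m,n} w_m\, w_n\, g_t(x-u_m)\, \overline{K}(u_m,u_n)\, g_t(u_n-y)\cdot V_t(x,y),
\]
over a fine grid $\{u_m\}$ with positive weights $\{w_m\}$, where $g_t$ is the Gaussian density of variance $t$ and $V_t$ is the auxiliary strictly totally positive kernel (of generalized Vandermonde type, e.g.\ $V_t(x,y)=e^{txy}$, which tends to the constant $1$ locally uniformly) used in \cite{BGKP-TN} to force strict positivity of all minors of order $\leq k$; one then checks, via the Cauchy--Binet formula and the strict total positivity of $g_t$ and $V_t$, that $K_t\in\TP^{(k)}_{X\times Y}$, and via the approximate-identity property of $g_t$ that $K_t\to K$ locally uniformly at points of continuity in the interior of $X\times Y$ as $t\downarrow 0$.

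Now suppose $Y=X$ and $K=K^{\mathsf T}$ is symmetric. The point is that every ingredient above can be taken symmetric under $x\leftrightarrow y$ by making the obvious symmetric choices: take $r_X=r_Y=:r$, so that $\overline{K}(x,y)=K(r(x),r(y))=K(r(y),r(x))=\overline{K}(y,x)$; use a single grid $\{u_m\}$ and a single weight sequence $\{w_m\}$ for both coordinates; and note that $g_t(u_m-v)=g_t(v-u_m)$ and that $V_t$ (e.g.\ $e^{txy}$) is symmetric. Relabelling the summation indices $m\leftrightarrow n$ in the displayed formula and using $\overline{K}(u_m,u_n)=\overline{K}(u_n,u_m)$ then yields $K_t(x,y)=K_t(y,x)$ for all $x,y$. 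Thus each $K_t$ is simultaneously $\TP^{(k)}$ and symmetric, so the approximating sequence furnished by the first assertion may be taken symmetric, which is exactly the ``Furthermore'' claim. (Note that $|X\times X|$ having nonempty interior is needed only in the same way as in the non-symmetric statement, and $\STP^{(k)}_X$ sits inside $\STN^{(k)}_X$ by restriction, so no further bookkeeping is required.)

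The one genuinely delicate point -- the main obstacle to confirm against \cite{BGKP-TN} -- is precisely that the smoothing/perturbation scheme there is, or can be rephrased to be, manifestly symmetric in the two coordinates; if it instead uses an asymmetric retraction, an asymmetric grid, or a one-sided smoothing step, one simply substitutes the symmetric variant above and re-runs the (routine) estimates for $\TP^{(k)}$-membership and for local uniform convergence, neither of which is affected by the symmetrization since both rely only on the strict total positivity of the Gaussian (and of $V_t$) together with standard bounds for approximate identities. We also observe that symmetrizing after the fact does not work: passing from a given $\TP^{(k)}$ sequence $(K_n)$ to $\tfrac12(K_n+K_n^{\mathsf T})$ fails because total nonnegativity is not preserved under addition, while passing to a composition $\int K_n(x,z)K_n(y,z)\,d\mu(z)$ -- which is symmetric and $\TP^{(k)}$ by the basic composition formula -- fails because it need not converge back to $K$. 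Hence one must enter the construction itself, as above.
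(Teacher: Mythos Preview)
The paper does not give its own proof of this theorem: it is simply quoted from \cite[Theorem~1.2]{BGKP-TN}, and the ``Furthermore'' clause is likewise attributed to that reference (the phrase preceding the theorem statement is ``we refer to \cite[Theorem~1.2]{BGKP-TN}, which gives the locally uniform approximation \dots\ via discretized Gaussian convolution''). So there is nothing to compare against beyond the citation itself.

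Your proposal goes further than the paper does: you sketch how the symmetric assertion should follow by re-running the construction of \cite{BGKP-TN} with symmetric choices (same retraction $r_X=r_Y$, same grid and weights in both coordinates, the evenness of $g_t$, and a symmetric $V_t$ such as $e^{txy}$). This is the correct idea, and you are right that naive post-hoc symmetrizations (averaging, or self-composition) fail for the reasons you give. The one caveat you yourself flag is the real one: your argument is contingent on the precise form of the smoothing scheme in \cite{BGKP-TN}; you are effectively asserting what that construction looks like rather than verifying it. As a self-contained proof this is therefore incomplete---it is a plausible reconstruction, not a derivation---but since the paper under review also defers entirely to \cite{BGKP-TN}, your write-up is, if anything, more informative than what appears here.
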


We now proceed. The first step is to prove the $\TP$ counterpart of Theorem~\ref{Tpowers}.

\begin{theorem}\label{TP-reduction}
 In the setting of Theorem~\ref{TmainTP} (and without the extra assumption in part~(4)), if $2\leq \mathcal{N} = \min\{|X|,|Y|,l\}\leq \infty, $ then every function satisfying~\eqref{ETPq} is of the form $F(\bt) = c {\bt}^{\boldsymbol{\alpha}}= c \prod_{j=1}^{p}t_{j}^{\alpha_{j}},$ with $c > 0$ and all $\alpha_{j} \geq 0$ but ${\boldsymbol \alpha} \neq {\bf 0}$. Moreover, if any $\alpha_j > 0$ then $k_j \geq 2$.
\end{theorem}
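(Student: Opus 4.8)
The plan is to follow the proof of Theorem~\ref{Tpowers}, with ``padding by zeros'' replaced by realization and approximation tools for strictly totally positive kernels. Since $\TP$ kernels are $(0,\infty)$-valued there is no boundary of the orthant to analyse here, so the whole content is that $F$ is a mixed power on $(0,\infty)^p$. First I would identify $X$ and $Y$ with subsets of $\mathbb{R}$: if $\TP^{(2)}_{X \times Y} = \emptyset$ there is nothing to prove (as $l \geq \mathcal{N} \geq 2$), and otherwise, for $K \in \TP^{(2)}_{X \times Y}$ and $x_1 < x_2$, the map $y \mapsto K(x_2,y)/K(x_1,y)$ is a strictly increasing function $Y \to (0,\infty)$, and symmetrically for $X$. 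The substitute for Lemma~\ref{Lpadding} is: for any $x_1 < x_2$ in $X$ and $y_1 < y_2$ in $Y$, \emph{every} $\TP_{2 \times 2}$ matrix is the restriction to $\{ x_1, x_2 \} \times \{ y_1, y_2 \}$ of a kernel in $\TP^{(\infty)}_{X \times Y}$ --- indeed a rescaled exponential (generalized Vandermonde) kernel $(x,y) \mapsto \exp\bigl( g(x) + h(y) + \lambda\, \xi(x) \eta(y) \bigr)$, with $\lambda > 0$, $\xi, \eta$ the embeddings into $\mathbb{R}$, and $g, h, \lambda$ free, can be made to hit any prescribed $\TP_{2 \times 2}$ matrix of values (alternatively, one invokes Whitney's theorem and Theorem~\ref{Twhitney2}). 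Since $l \geq 2$, it follows that $F[-]$ carries $p$-tuples of $\TP_{2 \times 2}$ matrices to $\TP_{2 \times 2}$ matrices.

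From this I would extract two facts. \emph{(i)~$F$ is jointly increasing on $(0,\infty)^p$:} feed $F[-]$ the tuple whose $k$-th kernel restricts on a $2$-point subgrid to $\bigl( \begin{smallmatrix} t_k + \delta_k & t_k \\ t_k & t_k \end{smallmatrix} \bigr)$ (which is $\TP_{2 \times 2}$ for $\delta_k > 0$); positivity of the $2 \times 2$ determinant of the image reads $F(\bt + \boldsymbol\delta) > F(\bt)$, valid for all $\boldsymbol\delta \in (0,\infty)^p$. \emph{(ii)~$F$ is a power function along each ray through the origin:} for ${\bf a} \in (0,\infty)^p$ and $K \in \TP^{(\infty)}_{X \times Y}$, each $a_j K$ lies in $\TP^{(\infty)}_{X \times Y} \subseteq \TP^{(k_j)}_{X \times Y}$, and $F[-]$ applied to $(a_1 K, \dots, a_p K)$ equals $F_{\bf a} \circ K$, where $F_{\bf a}(r) := F(r {\bf a})$; as $K$ ranges over $\TP^{(\infty)}_{X \times Y}$ its $2 \times 2$ restrictions range over all of $\TP_{2 \times 2}$, so $F_{\bf a}$ preserves $\TP_{2 \times 2}$ and hence $F_{\bf a}(r) = c_{\bf a} r^{\alpha_{\bf a}}$ with $c_{\bf a}, \alpha_{\bf a} > 0$, by \cite[Theorem~3.3]{BGKP-TN}; in particular $F$ is continuous along rays. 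Combining (i) and (ii): if ${\bf p}_n \to {\bf p}$ in $(0,\infty)^p$ then for each $\delta \in (0,1)$ one has $(1-\delta){\bf p} < {\bf p}_n < (1+\delta){\bf p}$ coordinatewise for $n$ large, hence $F\bigl( (1-\delta){\bf p} \bigr) < F({\bf p}_n) < F\bigl( (1+\delta){\bf p} \bigr)$ by (i); letting $n \to \infty$ and then $\delta \to 0$, and using continuity of $F$ along the ray through ${\bf p}$, forces $F({\bf p}_n) \to F({\bf p})$. Thus $F$ is continuous on $(0,\infty)^p$.

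Next I would isolate one variable at a time, exactly as in Theorem~\ref{Tpowers} but with a limit in place of padding. Fix $j$ and positive $t_i$ for $i \neq j$, and set $g_j(t) := F(t_1, \dots, t, \dots, t_p)$. Given a $\TP_{2 \times 2}$ matrix $\bigl( \begin{smallmatrix} a & b \\ c & d \end{smallmatrix} \bigr)$, feed $F[-]$ the tuple whose $j$-th kernel restricts to it and whose $i$-th kernel ($i \neq j$) restricts to $\bigl( \begin{smallmatrix} t_i + \varepsilon & t_i \\ t_i & t_i \end{smallmatrix} \bigr)$; positivity of the $2 \times 2$ determinant of the image reads $F\bigl( a, \{ t_i + \varepsilon \}_{i \neq j} \bigr)\, g_j(d) > g_j(b)\, g_j(c)$, and letting $\varepsilon \to 0$ (using the continuity just established) yields $g_j(a) g_j(d) \geq g_j(b) g_j(c)$. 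Thus $g_j|_{(0,\infty)}$ is a continuous preserver of $\TN_{2 \times 2}$, so $g_j(t) = c_j t^{\alpha_j}$ with $c_j, \alpha_j \geq 0$ by \cite[Theorem~3.3]{BGKP-TN} --- crucially with $\alpha_j \geq 0$, which is the hypothesis needed in Lemma~\ref{Lclaim}. That lemma then gives $F(\bt) = c\, \bt^{\boldsymbol\alpha}$ on $(0,\infty)^p$ with $c = F({\bf 1}) > 0$ (as $F$ is positive-valued). Moreover $\boldsymbol\alpha \neq {\bf 0}$: a nonzero constant $F$ would send any admissible tuple ${\bf K} \in \prod_j \TP^{(k_j)}_{X\times Y}$ (nonempty here) to the constant kernel $c\, {\bf 1}$, all of whose $2 \times 2$ minors vanish, contradicting $F \circ {\bf K} \in \TP^{(l)}_{X \times Y} \subseteq \TP^{(2)}_{X \times Y}$.

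Finally, for the last assertion, I would suppose $\alpha_j > 0$ while $k_j = 1$ and derive a contradiction. Choose a $2$-point subgrid $\{ x_1 < x_2 \} \times \{ y_1 < y_2 \}$; let $K_j$ be any positive kernel on $X \times Y$ (so $K_j \in \TP^{(1)}_{X \times Y} = \TP^{(k_j)}_{X \times Y}$) whose restriction to the subgrid is $\bigl( \begin{smallmatrix} 1 & 2 \\ 1 & \sqrt{2} \end{smallmatrix} \bigr)$, and let the remaining $K_i$ be kernels in $\TP^{(\infty)}_{X \times Y}$ whose restrictions to the subgrid tend to ${\bf 1}_{2 \times 2}$ (e.g.\ $\exp(\varepsilon\, \xi(x)\eta(y))$ with $\varepsilon \to 0^+$). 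Since $F = c\, \bt^{\boldsymbol\alpha}$, the restriction of $F \circ {\bf K}$ to the subgrid tends to a positive multiple of $\bigl( \begin{smallmatrix} 1 & 2^{\alpha_j} \\ 1 & 2^{\alpha_j/2} \end{smallmatrix} \bigr)$, whose determinant is proportional to $2^{\alpha_j/2} - 2^{\alpha_j} < 0$; by continuity of the determinant this contradicts $F \circ {\bf K} \in \TP^{(2)}_{X \times Y}$ once the restrictions are close enough to ${\bf 1}_{2 \times 2}$. Hence $\alpha_j > 0$ forces $k_j \geq 2$, which completes the proof. The crux --- and the step I expect to be the real obstacle --- is establishing the continuity of $F$ in the second paragraph: constant kernels are not strictly totally positive of any order $\geq 2$, so one cannot plug them in to isolate variables, and continuity is what repairs this; finding the route through it (ray restrictions are powers, then a monotonicity squeeze) is the one non-routine point, while the realization of $\TP_{2 \times 2}$ test matrices inside $X \times Y$ is the main technical one. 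Everything after continuity is a combination of the univariate classification \cite[Theorem~3.3]{BGKP-TN} with the rigidity Lemma~\ref{Lclaim}.
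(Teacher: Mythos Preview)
Your proof is correct and takes a genuinely different route from the paper's. The paper first proves a standalone multivariate continuity result (Proposition~\ref{prop-TP continuity}) via measure theory---joint monotonicity implies Lebesgue measurability, the discontinuity set has measure zero, so a continuity point exists, from which coordinatewise monotonicity and then continuity are derived using results of Young and Kruse--Deely. With continuity in hand, the paper then extends $F$ to $[0,\infty)^p$, invokes Whitney density to push the hypothesis down to $(\TN_{2\times 2})^p \to \TN_{2\times 2}$, and applies Theorem~\ref{Tpowers} wholesale. Your argument is more self-contained: the ray observation in step~(ii)---that $r \mapsto F(r\mathbf{a})$ is itself a univariate $\TP_{2\times 2}$ preserver, hence a power---combined with joint monotonicity and a sandwich, yields continuity with no measure theory and no external continuity lemmas. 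You then isolate each $g_j$ directly via a limit of $\TP$ test matrices rather than detouring through the full $\TN$ classification. What the paper's approach buys is a reusable, from-scratch multivariate continuity proposition; what yours buys is a shorter, more elementary argument that leverages the already-established univariate $\TP$ case.

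One citation to correct: in your step~(ii) you invoke \cite[Theorem~3.3]{BGKP-TN} to conclude that the univariate $\TP_{2\times 2}$ preserver $F_{\mathbf a}$ is a power function, but Theorem~3.3 there is the $\TN$ classification. At that moment you do not yet have continuity of $F_{\mathbf a}$, so you cannot pass from $\TP_{2\times 2}$ to $\TN_{2\times 2}$ by approximation. The result you need is the univariate $\TP_{2\times 2}$ classification, which is in \cite{BGKP-TN} but under a different theorem number (see the discussion around \cite[Theorem~4.6]{BGKP-TN}). Your later citation of Theorem~3.3 for $g_j$---after continuity is established and you have shown $g_j$ preserves $\TN_{2\times 2}$ on positive entries---is fine.
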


This is more involved than Theorem~\ref{Tpowers} because the test matrices there are not available to us to directly obtain the form of $F$. Thus, we start by (a)~proving the continuity and monotonicity (joint and coordinatewise) of every preserver $F[-]: (\TP_{2\times 2})^{p} \to \TP_{2\times2}$, so that we can use the previous section on $\TN$ preservers via approximation results. This is followed by (b)~understanding the domains $X,Y$ on which such kernels can exist.

\begin{proposition}\label{prop-TP continuity}
Let $F:(0,\infty)^{p}\to (0,\infty)$ be a function which maps $(\TP_{2\times 2})^{p}$ into $\TP_{2\times2}.$ Then $F$ is jointly increasing (see Definition~\ref{Dstrict}) and is continuous.
\end{proposition}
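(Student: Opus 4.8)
The plan is to exploit the $2\times 2$ determinant condition directly: for $\TP_{2\times 2}$ matrices $A_j$ the hypothesis says $\det F[\mathbf A]>0$, which translates into a functional inequality on $F$ between its values at two points of $(0,\infty)^p$. More precisely, given two points $\mathbf u=(u_1,\dots,u_p)$ and $\mathbf v=(v_1,\dots,v_p)$ in $(0,\infty)^p$ with $u_j\le v_j$ for all $j$ (and not all equal), I want to manufacture a single tuple of $\TP_{2\times 2}$ matrices whose $F$-image has determinant $F(\mathbf v)F(\mathbf u)-F(\text{something})^2$ or $F(\mathbf v)F(\mathbf u)-F(\mathbf w)F(\mathbf w')$; the natural choice is to take, for each $j$, a rank-two positive $2\times 2$ matrix with diagonal entries $v_j,u_j$ (or $u_j,v_j$) and suitable off-diagonal entries so that it is genuinely $\TP$. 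Then positivity of the determinant of the Schur-type image forces $F(\mathbf v)>F(\mathbf u)$, once one also uses the reversed arrangement to rule out equality; this is exactly the joint-increasing conclusion. The key point is that a $2\times2$ matrix $\begin{pmatrix} v & b\\ b & u\end{pmatrix}$ with $0<b<\sqrt{uv}$ is $\TP_{2\times 2}$, so one has plenty of freedom.

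\textbf{Joint monotonicity.} First I would show: if $x_j<y_j$ for all $j$ then $F(\mathbf x)<F(\mathbf y)$. Fix such $\mathbf x,\mathbf y$. For each $j$ pick $b_j$ with $0<b_j<\sqrt{x_j y_j}$ and set $A_j:=\begin{pmatrix} y_j & b_j\\ b_j & x_j\end{pmatrix}\in\TP_{2\times 2}$. Then $F[\mathbf A]=\begin{pmatrix} F(\mathbf y) & F(\mathbf b)\\ F(\mathbf b) & F(\mathbf x)\end{pmatrix}$ where $\mathbf b=(b_1,\dots,b_p)$, and the hypothesis gives $F(\mathbf y)F(\mathbf x)>F(\mathbf b)^2>0$. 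Swapping the diagonal entries of each $A_j$ (i.e.\ using $A_j':=\begin{pmatrix} x_j & b_j\\ b_j & y_j\end{pmatrix}$) gives $F(\mathbf x)F(\mathbf y)>F(\mathbf b)^2$ as well — same inequality, so this symmetric trick alone does not separate $F(\mathbf x)$ from $F(\mathbf y)$. To get strict monotonicity I instead compare $\mathbf x$ and $\mathbf y$ through an intermediate point: choose $z_j$ with $x_j<z_j<y_j$, and use the matrices $\begin{pmatrix} y_j & z_j\\ z_j & x_j\end{pmatrix}$ — this is $\TP_{2\times2}$ iff $z_j^2<x_j y_j$, which may fail. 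The clean fix is a telescoping/one-coordinate-at-a-time argument: it suffices to show $F$ is strictly increasing in each variable separately (with the others fixed), since then moving from $\mathbf x$ to $\mathbf y$ one coordinate at a time strictly increases $F$ at each step. For a single coordinate, fix $t_1,\dots,\widehat{t_j},\dots,t_p>0$ and let $g_j(t):=F(t_1,\dots,t,\dots,t_p)$. For $0<a<b$, apply $F[-]$ to the tuple with $A_j=\begin{pmatrix} b & c\\ c & a\end{pmatrix}$ ($0<c<\sqrt{ab}$) and $A_i=t_i{\bf 1}_{2\times 2}+\varepsilon\Id_2$ perturbed slightly off the rank-one matrix to stay in $\TP_{2\times 2}$; letting $\varepsilon\to 0$ and using continuity (proved next — or arguing with the already-established non-strict monotonicity) yields $g_j(b)g_j(a)\ge g_j(c)^2$. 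By \cite[Theorem~3.3]{BGKP-TN}, $g_j$ restricted to $(0,\infty)$ is already known (from the earlier $\TN_{2\times 2}$ analysis, once continuity is in hand) to be a power function $c_j t^{\alpha_j}$ — but to bootstrap this I should first get monotonicity without that. The cleanest route: the non-strict statement $x_j\le y_j\,\forall j\implies F(\mathbf x)\le F(\mathbf y)$ follows by the limiting argument $b_j\uparrow\sqrt{x_jy_j}$ applied to a \emph{degenerate} comparison, and strictness then follows because equality $F(\mathbf y)F(\mathbf x)=F(\mathbf b)^2$ would, by continuity and a perturbation of one $A_j$, contradict strict positivity of some nearby determinant. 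I expect this separation of strict from non-strict monotonicity to be the main technical nuisance.

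\textbf{Continuity.} Once $F$ is jointly non-decreasing (monotone in each coordinate), it is classical that a coordinatewise-monotone function on an open box $(0,\infty)^p$ has at most a "small" discontinuity set, but one needs actual continuity. For this I would invoke multiplicative mid-convexity: using the matrices $A_j:=\begin{pmatrix} t_j & \sqrt{t_jt_j'}\\ \sqrt{t_jt_j'} & t_j'\end{pmatrix}$, which are positive semidefinite of rank $1$ — not $\TP_{2\times2}$ — so I must perturb: take $A_j^{(\varepsilon)}:=\begin{pmatrix} t_j & \sqrt{t_jt_j'}-\varepsilon\\ \sqrt{t_jt_j'}-\varepsilon & t_j'\end{pmatrix}\in\TP_{2\times 2}$, apply $F[-]$, get $F(\mathbf t)F(\mathbf t')\ge F(\mathbf b^{(\varepsilon)})^2$ where $b_j^{(\varepsilon)}=\sqrt{t_jt_j'}-\varepsilon$, and let $\varepsilon\to 0^+$ using \emph{monotonicity} (so $\liminf_\varepsilon F(\mathbf b^{(\varepsilon)})\ge\sup_\varepsilon F(\mathbf b^{(\varepsilon)})$ along the increasing net, i.e.\ $F$ is left-continuous along that ray) to conclude $F(\mathbf t)F(\mathbf t')\ge F(\sqrt{t_1t_1'},\dots,\sqrt{t_pt_p'})^2$. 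Thus $\log F$ is midpoint-convex in the variables $\log t_j$. A midpoint-convex function that is also locally bounded above (which follows from monotonicity: on a compact box it is bounded by its value at the top corner) is continuous — this is the classical Bernstein–Doetsch theorem. Hence $F$ is continuous on $(0,\infty)^p$, and combined with the separate strict-monotonicity argument above (now legitimate since $F$ is continuous), $F$ is jointly increasing and continuous, as claimed. The one place to be careful is justifying the passage to the limit $\varepsilon\to 0$, i.e.\ that $F(\mathbf b^{(\varepsilon)})\to F(\mathbf b^{(0)})$: this is where monotonicity feeds back in, since $\mathbf b^{(\varepsilon)}\uparrow\mathbf b^{(0)}$ coordinatewise, so $F(\mathbf b^{(\varepsilon)})$ is monotone in $\varepsilon$ and bounded, hence convergent, and its limit is $\ge$ any lower value — giving the mid-convexity inequality with a possibly smaller right-hand side, which is still enough. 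The genuine obstacle I anticipate is the bootstrap loop between monotonicity and continuity: I would break it by first proving \emph{non-strict} monotonicity (a pure inequality argument, no limits), then mid-convexity and hence continuity via Bernstein–Doetsch, and only then upgrading to \emph{strict} monotonicity using continuity plus a perturbation argument.
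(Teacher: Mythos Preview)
Your approach has two genuine gaps, both located exactly where you flag the ``bootstrap loop'' but do not resolve it.

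\textbf{Joint increase is a one-liner you missed.} For $t_j<t'_j$ take
$K_j=\begin{pmatrix} t'_j & t_j\\ t_j & t'_j\end{pmatrix}\in\TP_{2\times 2}$; then
$F[\mathbf K]=\begin{pmatrix} F(\mathbf t') & F(\mathbf t)\\ F(\mathbf t) & F(\mathbf t')\end{pmatrix}$ and $0<\det F[\mathbf K]=F(\mathbf t')^2-F(\mathbf t)^2$, so $F(\mathbf t')>F(\mathbf t)$. This is exactly the paper's first step, and it dissolves the whole monotonicity discussion in your proposal: you never needed the telescoping, the intermediate $z_j$, or any limits. Your matrices $\begin{pmatrix} y_j & b_j\\ b_j & x_j\end{pmatrix}$ put the two target values on \emph{different} diagonal positions, so the determinant mixes them with a third value $F(\mathbf b)$; putting the larger value on \emph{both} diagonal positions and the smaller on the off-diagonal gives the clean comparison.

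\textbf{The mid-convexity limit is not justified.} From $A_j^{(\varepsilon)}=\begin{pmatrix} t_j & \sqrt{t_jt'_j}-\varepsilon\\ \sqrt{t_jt'_j}-\varepsilon & t'_j\end{pmatrix}$ you correctly get $F(\mathbf t)F(\mathbf t')>F(\mathbf b^{(\varepsilon)})^2$. But letting $\varepsilon\downarrow 0$ with only monotonicity yields $F(\mathbf t)F(\mathbf t')\geq L^2$ where $L=\sup_\varepsilon F(\mathbf b^{(\varepsilon)})$ is the \emph{left limit along the diagonal}; a monotone function can jump, so in general $L<F\!\left(\sqrt{t_jt'_j}\right)$ and you do \emph{not} obtain mid-convexity of $F$ (your remark ``a possibly smaller right-hand side, which is still enough'' is not: Bernstein--Doetsch needs genuine mid-convexity of the function itself). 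The paper does carry out a mid-convexity argument in a related setting (Proposition~\ref{Pmidconvex}), but only after introducing the auxiliary function $F^\nearrow(\mathbf t):=\lim_{\varepsilon\to 0^+}F(\mathbf t+\varepsilon\mathbf 1)$, showing \emph{that} is mid-convex, deducing its continuity along diagonals via Ostrowski/Blumberg--Sierpi\'nski, and then arguing $F=F^\nearrow$. Your sketch omits this step entirely.

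\textbf{Comparison with the paper's proof.} The paper's proof of this proposition does not use mid-convexity at all. After the one-line joint-increase step above, it observes (via the Appendix) that a jointly increasing function has measure-zero discontinuity set, fixes a point of continuity $\mathbf a$, and then exploits \emph{asymmetric} $\TP_{2\times 2}$ test matrices (which are available here since the hypothesis is $\TP$, not symmetric $\TP$) of the form $\begin{pmatrix} t'_j & a_j\\ t_j & a_j\end{pmatrix}$ and $\begin{pmatrix} t_i & a_i-\epsilon\\ t_i & a_i\end{pmatrix}$ pivoting at $\mathbf a$, to deduce first coordinatewise monotonicity and then coordinatewise continuity of each $g_j$; joint continuity then follows from the classical Young/Kruse--Deely result that separate continuity plus coordinatewise monotonicity implies joint continuity. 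You restricted yourself to symmetric test matrices, which forced you toward the harder mid-convexity route.
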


\begin{proof}
For joint monotonicity, fix $\bt = (t_{1},\ldots,t_{p}),\ \bt' = (t'_{1},\ldots,t'_{p}) \in (0, \infty)^{p}$ such that $t'_j > t_j$ for all $j \in [p]$. Now consider the following $\TP_{2\times 2}$ matrices:
\[
K_{j} =
\begin{pmatrix} t'_j & t_j \\ t_j & t'_j \end{pmatrix},
\qquad \forall 1\leq j\leq p.
\]
By hypothesis, $F[{\bf K}] \in \TP_{2\times 2}$. So
$0< \det(F[{\bf K}])= F(\bt')^2-F(\bt)^2,$
whence $F(\bt') > F(\bt).$ Therefore, $F$ is jointly increasing, and hence Lebesgue measurable (we include a proof in the Appendix; see Proposition~\ref{Lebmeas}). 
Next, the set of discontinuities of any jointly non-decreasing function (which subsumes our case) has zero Lebesgue measure -- see Theorem~\ref{Discts} in the Appendix. In particular, the set of continuity points of $F$ is nonempty. 

Let ${\bf a} = (a_{1},\ldots, a_{p})\in (0,\infty)^{p}$ be a point of continuity of $F.$ To establish monotonicity of $F$ in each variable separately, fix $j\in [p]$ and let $\bt = (t_{1}, t_{2},\ldots, t_j, \ldots, t_{p}), \bt' = (t_{1}, t_{2},\ldots, t'_j \ldots, t_{p}) \in (0,\infty)^p$, with $t_j < t'_j$. Now consider the matrices
    \begin{equation*}
    A_j =
     \begin{pmatrix}
            t'_j  &  a_j \\
            t_j  &  a_j
        \end{pmatrix} \in \TP_{2 \times 2},
        \quad \text{and} \quad 
        A_i =
         \begin{pmatrix}
            t_i  &  a_i - \epsilon\\
            t_i  &  a_i
        \end{pmatrix},\
        \forall i \neq  j.
    \end{equation*}
    Note that all $A_i$ are $\TP_{2\times 2}$ if $0< \epsilon < \min\{a_i: i \neq j\}.$ Apply $F[-]$ entrywise to get
    \begin{equation*}
         F[A_{1}, A_{2}, \ldots, A_{p}] =
     \begin{pmatrix}
            F(t_{1}, t_{2},\ldots, t'_j, \ldots, t_{p})  &  F(a_{1}-\epsilon, a_{2}-\epsilon,\ldots, a_j, \ldots, a_{p} - \epsilon) \\
            F(t_{1}, t_{2},\ldots, t_j, \ldots, t_{p})  &  F(a_{1}, a_{2}, \ldots, a_j,\ldots, a_{p}).
        \end{pmatrix}
    \end{equation*}
    Then $\det F[A_{1},\ldots,A_{p}]>0$; now letting $\epsilon\to 0^{+}$, we obtain:
    \[
    F(t_{1}, t_{2},\ldots, t'_j, \ldots, t_{p}) \geq F(t_{1}, t_{2},\ldots, t_j \ldots, t_{p}).
    \]

    Thus, for each $1\leq j \leq n,$ the function $g_j : (0, \infty) \to (0, \infty)$ defined by 
    \[
   g_j(t) = F(t_{1},\ldots, t_{j-1}, t , t_{j+1}, \ldots, t_{p}) 
    \]
    is non-decreasing for all $t_{1},\ldots, \widehat{t_j}, \ldots t_{p} >0$, as claimed. In particular,
    $g_j(t - \epsilon) \leq g_j(t) \leq g_j(t + \epsilon)$ for each $0<\epsilon<t.$ Since $g_j$ is non-decreasing, its left and right hand limits exist, and we get
    \begin{equation}\label{nondec}
       \lim_{\epsilon \to 0^{+}} g_j(t -\epsilon) \leq g_j(t) \leq \lim_{\epsilon \to 0^{+}} g_j(t + \epsilon).
    \end{equation}
    
    To prove the continuity of $F$, consider the following $\TP_{2\times 2}$ matrices (with $0< \epsilon < t)$:
    \begin{equation*}
        A_j =
     \begin{pmatrix}
            t - \epsilon  &  t + \epsilon \\
            a_j  &  a_j+ (\frac{2a_j + 1}{t -\epsilon}) \epsilon
        \end{pmatrix}
        \quad \text{and} \quad 
        A_i =
         \begin{pmatrix}
            t_i  &  t_i \\
            a_i  &  a_i + \epsilon
        \end{pmatrix},\
        \forall i\neq j.
    \end{equation*}
    Apply $F[-]$ to get
    \begin{align*}
        &\ F(t_{1},\ldots, t -\epsilon, \ldots, t_{p}) F\bigg(a_{1}+\epsilon, \ldots, a_j+ {\textstyle\left(\frac{2a_j+1}{t-\epsilon}\right)} \epsilon, \ldots, a_{p}+ \epsilon\bigg)\\
        > &\ F(t_{1},\ldots, t + \epsilon, \ldots, t_{p}) F(a_{1},\ldots, a_j,\ldots, a_{p}).
    \end{align*}
    Taking limits,
    \begin{equation*}
        \lim_{\epsilon \to 0^{+}} F(t_{1},\ldots,t-\epsilon, \ldots, t_{p}) \geq \lim_{\epsilon \to 0^{+}} F(t_{1},\ldots, t +\epsilon, \ldots, t_{p}).
    \end{equation*}
    Combining this with \eqref{nondec},
       \begin{equation*}
       \lim_{\epsilon \to 0^{+}} g_j(t - \epsilon) = g_j(t) = \lim_{\epsilon \to 0^{+}} g_j(t + \epsilon), \quad \forall 1\leq j\leq p, \ t>0.
    \end{equation*}
    Thus, $g_j : (0, \infty) \to (0, \infty)$ is continuous for $j \in [p]$. By \cite{Young-cts} and \cite[Proposition 2]{KD-cts}, so is $F$.
\end{proof}

As promised, we now point out when a $\TP$ kernel can exist on $X \times Y$:

\begin{lemma}[{\cite[Lemma~4.5]{BGKP-TN}}]\label{Ltptosets}
The following are equivalent for nonempty totally ordered sets $X,Y$.
\begin{enumerate}
\item There exists a totally positive kernel $K : X \times Y \to \mathbb{R}$.

\item There exists a $\TP_2$ kernel $K : X \times Y \to \mathbb{R}$.

\item Either $X$ or $Y$ is a singleton, or there exist order-preserving
injections $: X,Y \hookrightarrow ( 0, \infty )$.
\end{enumerate}
The same equivalence holds if $Y = X$ and the kernels in $(1)$ and
$(2)$ are taken to be symmetric.
\end{lemma}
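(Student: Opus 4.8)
The statement to prove is Lemma~\ref{Ltptosets}, characterizing when a $\TP$ (equivalently $\TP_2$) kernel exists on $X \times Y$. This is attributed to \cite[Lemma~4.5]{BGKP-TN}, so the plan is essentially to reconstruct that argument.

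\textbf{Overall approach.} The implications $(1)\Rightarrow(2)$ is trivial (a $\TP$ kernel is in particular $\TP_2$). The substantive content is $(2)\Rightarrow(3)$ and $(3)\Rightarrow(1)$. For $(3)\Rightarrow(1)$: if $X$ or $Y$ is a singleton, take $K\equiv 1$ (all $1\times 1$ minors are positive, and there are no larger minors), so we may assume order-preserving injections $\phi: X \hookrightarrow (0,\infty)$ and $\psi: Y \hookrightarrow (0,\infty)$; then set $K(x,y) := e^{\phi(x)\psi(y)}$. This is the classical exponential kernel: for $x_1 < \cdots < x_n$ and $y_1 < \cdots < y_n$, the matrix $\bigl(e^{\phi(x_i)\psi(y_j)}\bigr)$ is a generalized Vandermonde matrix in the variables $e^{\phi(x_i)}$ with exponents $\psi(y_j)$, and such matrices have strictly positive determinant when the bases and exponents are each strictly increasing and positive — a standard fact (see e.g.\ the references on generalized Vandermonde kernels cited in the introduction). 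Hence $K \in \TP_{X\times Y}$.

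\textbf{The main step, $(2)\Rightarrow(3)$.} Suppose $K : X\times Y \to \mathbb{R}$ is $\TP_2$, and suppose neither $X$ nor $Y$ is a singleton; we must produce order-preserving injections of $X$ and $Y$ into $(0,\infty)$. Fix any $y_0 < y_1$ in $Y$ (possible since $|Y|\geq 2$) and define $\phi: X \to \mathbb{R}$ by $\phi(x) := K(x,y_1)/K(x,y_0)$; note $K$ takes only positive values (all $1\times 1$ minors are positive), so $\phi$ is well-defined and positive. If $x < x'$ in $X$, the $2\times 2$ minor on rows $x,x'$ and columns $y_0,y_1$ being positive gives $K(x,y_0)K(x',y_1) > K(x,y_1)K(x',y_0)$, i.e.\ $\phi(x') > \phi(x)$; thus $\phi$ is a strictly order-preserving (hence injective) map $X \hookrightarrow (0,\infty)$. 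Symmetrically, fixing $x_0 < x_1$ in $X$ and setting $\psi(y) := K(x_1,y)/K(x_0,y)$ gives an order-preserving injection $Y\hookrightarrow(0,\infty)$. This establishes $(3)$.

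\textbf{The symmetric case and expected obstacle.} For the final sentence, when $Y=X$ and we ask for symmetric kernels: $(1)\Rightarrow(2)$ is again immediate; for $(2)\Rightarrow(3)$ the above argument applies verbatim (symmetry is not needed to extract the order-embedding); and for $(3)\Rightarrow(1)$ the kernel $K(x,x') := e^{\phi(x)\phi(x')}$ — using the \emph{same} injection $\phi$ for both coordinates — is manifestly symmetric and is $\TP$ by the generalized Vandermonde argument, while for $|X|=1$ we take $K\equiv 1$. I do not expect a serious obstacle here: the only point requiring mild care is the positivity of generalized Vandermonde determinants $\det\bigl(a_i^{b_j}\bigr)$ for $0<a_1<\cdots<a_n$ and $b_1<\cdots<b_n$, which one either cites or proves quickly by the standard argument (reduce to ordinary Vandermonde by a limiting/density argument in the exponents, or invoke the theory of $\TP$ of the exponential kernel). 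The rest is bookkeeping with $2\times 2$ minors.
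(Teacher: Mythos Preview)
The paper does not supply its own proof of this lemma; it is stated with a citation to \cite[Lemma~4.5]{BGKP-TN} and used as a black box. Your reconstruction is correct and is the standard argument: the ratio map $\phi(x) = K(x,y_1)/K(x,y_0)$ extracted from $2\times 2$ positivity for $(2)\Rightarrow(3)$, and the exponential/generalized Vandermonde kernel $e^{\phi(x)\psi(y)}$ for $(3)\Rightarrow(1)$, with the symmetric case handled by taking $\psi=\phi$.
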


Given this result, it is possible to employ various $\TP$ test kernels that are restrictions of kernels on $(0,\infty)^2$ or even $\mathbb{R}^2$. With these tools at hand, we now show:

\begin{proof}[Proof of Theorem~\ref{TP-reduction}]
    Since $\mathcal{N}\geq 2,$ there exists a $\TP^{(2)}$ kernel on $X\times Y.$ Thus, using Lemma~\ref{Ltptosets} $X, Y$ can be identified -- via order-preserving bijections -- with subsets of $(0,\infty).$ 
    \par
    Moreover, as is explained in paragraph 2 of the proof of \cite[Theorem~4.6]{BGKP-TN}, every $2\times 2 \ \TP$ matrix is a rescaled generalized Vandermonde matrix
    \[
    \lambda^{-1}(u_{i}^{\alpha_{j}})_{i,j=1}^{2} \qquad \lambda, u_{1}, u_{2}>0,\ \alpha_{1}, \alpha_{2}\in \mathbb{R},
    \]
    where either $u_{1}<u_{2}$ and $\alpha_1 < \alpha_2$, or $u_{1}> u_{2}$ and $\alpha_{1}> \alpha_{2}.$ In particular, by Lemma~\ref{Ltptosets} and \cite[Theorem~4.6]{BGKP-TN}, every $2\times 2\ \TP$ matrix is (up to reversing the rows as well as the columns) a ``submatrix'' of the Vandermonde kernel $\lambda^{-1} e^{xy}$ on $X\times Y$ (which is $\TP$).

From this it follows that since $F[-] : \TP^{(k_1)}_{X \times Y} \times \cdots \times \TP^{(k_p)}_{X \times Y} \to \TP^{(l)}_{X \times Y}$, hence $F[-]$  sends $\TP_{2 \times 2} \times \cdots \times \TP_{2 \times 2} \to \TP_{2 \times 2}$. Thus, by the above Proposition~\ref{prop-TP continuity}, $F$ is continuous, positive, and coordinatewise non-decreasing, and hence admits a continuous extension to $[0,\infty)^p$, which we also denote by $F$. By Whitney's density theorem \cite{Whitney}, this extension $F[-]$ sends $(\TN_{2 \times 2})^p$ to $\TN_{2 \times 2}$. So by Theorem~\ref{Tpowers} and as $F$ is continuous and nonconstant, it is a mixed power function with $c > 0$ and ${\boldsymbol \alpha} \neq {\bf 0}$.
    
Finally, suppose that some $k_j = 1$ -- say for $j = 1.$ We recall that in the $\TN$ case, we had tools like constant kernels and padding by zeros available to us, but here in the $\TP$ case, we have neither of those; we remedy this by making use of the approximation technique and padding -- by \textit{ones} -- a suitable kernel. 

For all $j\neq 1,$ take the sequence of generalized Vandermonde kernels $\widetilde{K}^{(n)}_{j}(x,y) = e^\frac{xy}{n}$. Clearly, this is a sequence of $\TP$ kernels converging to the constant ($\TN$) kernel with all $1$s. As $\mathcal{N} \geq 2$, fix points $x_1 < x_2$ in $X$ and $y_1 < y_2$ in $Y$, and consider the test kernels evaluated at these pairs of points to be the matrices
\begin{equation}\label{Ekernel}
K_{1} = \begin{pmatrix} 1 & 2 \\ 2 & 1 \end{pmatrix}, \qquad \ 
K^{(n)}_{j} = \begin{pmatrix} e^{x_{1}y_{1}/n} & e^{x_{1}y_{2}/n} \\ e^{x_{2}y_{1}/n} & e^{x_{2}y_{2}/n} \end{pmatrix}, \forall j\neq 1.
\end{equation}

Let $\widetilde{K}_1 = \widetilde{K}^{(n)}_1 : X \times Y \to (0,\infty)$ be the kernel obtained via padding $K_{1}$ by $1$; note that $\widetilde{K}_{1} \in \TP_{X \times Y}^{(1)}
\setminus \TP_{X \times Y}^{(2)}$. Define $\widetilde{K}^{(n)}_{j}(x,y) = e^\frac{xy}{n} \in \TP_{X \times Y}^{(k_j)}$ for all other $j$, as above. By hypothesis, $F \circ {\bf
\widetilde{K}}^{(n)}$ is $\TP^{(2)}$ for all $n\geq 1$. As $F$ is continuous by Proposition~\ref{prop-TP continuity}, this implies
\[
0 \leq \lim_{ n \to \infty} \det (F \circ {\bf \widetilde{K}}^{(n)})[ \bx; \by ] =
c^2(1 - 4^{\alpha_1}),
\]
and so $\alpha_{1} = 0$, as desired.
\end{proof}

    Another significant part of the proof of Theorem~\ref{TmainTP} involves showing that if $\mathcal{N}\geq 3,$ the only multivariate $\TP$ transforms are individual powers (akin to the $\TN$ case). We next show this and also obtain additional information about which powers are permissible:

\begin{theorem}\label{TP-3}
   In the setting of Theorem~\ref{TmainTP} (and without the extra assumption in part~(4)), if $3\leq \mathcal{N}\leq \infty,$ there exists a unique $j_0 \in [p]$ such that $F(\bt)= ct_{j_0}^{\alpha_{j_0}},$ with
(i)~$c,\alpha_{j_0} > 0$ and
(ii)~$\alpha_{j_0} \in \mathbb{Z}_{>0} \cup (\mathcal{N}-2,\infty)$.
If moreover $\mathcal{N} = \infty$ then $\alpha_{j_0} = 1$.
\end{theorem}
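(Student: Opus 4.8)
\textbf{Proof proposal for Theorem~\ref{TP-3}.}

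The plan is to leverage Theorem~\ref{TP-reduction} as the starting point, which tells us that since $\mathcal{N} \geq 3 \geq 2$, the transform $F$ is already a mixed power function $F(\bt) = c\bt^{\boldsymbol{\alpha}}$ with $c > 0$, $\boldsymbol{\alpha} \geq {\bf 0}$, and $\boldsymbol{\alpha} \neq {\bf 0}$. The first task is to rule out more than one positive exponent. By Lemma~\ref{Ltptosets} we may identify $X, Y$ with subsets of $(0,\infty)$ (using $\mathcal{N} \geq 2$), and then pick $\bx \in \inc{X}{3}$, $\by \in \inc{Y}{3}$. If $\alpha_1, \alpha_2 > 0$, I would take $\TP^{(\infty)}$ test kernels on $X \times Y$ approximating (locally uniformly, via Theorem~\ref{Twhitney2}, or directly as generalized Vandermonde kernels perturbed slightly) the $\TN$ matrices $K_1 = \left(\begin{smallmatrix} 1 & 1 & 0 \\ 1 & 1 & 1 \\ 1 & 1 & 1\end{smallmatrix}\right)$, $K_2 = K_1^T$, and $K_j = {\bf 1}_{3\times 3}$ for $j \geq 3$, exactly as in the $\TN$ proof of Theorem~\ref{TmainTN}(3). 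By continuity of $F$ (Proposition~\ref{prop-TP continuity}) the determinant of $F \circ {\bf K}$ at $[\bx;\by]$ tends to $-c^3 < 0$, contradicting $\TP^{(3)}$-ness (indeed $\TN^{(3)}$-ness of the limit). Hence at most one $\alpha_j$ is positive; since $\boldsymbol{\alpha} \neq {\bf 0}$ there is a unique $j_0$ with $\alpha_{j_0} > 0$, and $F$ depends only on $t_{j_0}$ up to the harmless factor $c$. To see $k_{j_0} \geq \mathcal{N}$: if $k_{j_0} < \mathcal{N}$, use a $\mathcal{N} \times \mathcal{N}$ $\TP^{(k_{j_0})}$-but-not-$\TP^{(\mathcal{N})}$ test kernel in the $j_0$-slot (again via Whitney approximation of a suitable $\TN$ matrix of this type, e.g. a Jain-type or Vandermonde-type kernel) with Vandermonde kernels $e^{xy/n} \to {\bf 1}$ in all other slots, and obtain a negative $\mathcal{N} \times \mathcal{N}$ minor in the limit.

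Next, to pin down the admissible exponents in claim~(ii), I would work with $\mathcal{N} \times \mathcal{N}$ minors and ask: for which $\alpha > 0$ is $\TP_{\mathcal{N} \times \mathcal{N}}$ (respectively $\TP^{(\mathcal{N})}$ on the relevant domains) closed under the entrywise power $K \mapsto K^{\circ\alpha}$? The positive-integer cases $\alpha \in \mathbb{Z}_{>0}$ are closed by the Schur product theorem for total positivity (the $\alpha$-fold Schur power of a $\TP^{(\mathcal{N})}$ kernel is $\TP^{(\mathcal{N})}$, as follows from the Cauchy--Binet / Andréief expansion, cf. the $\TN_{2\times 2}$ argument promoted to general order). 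For non-integer $\alpha$, the positive direction uses the results of \cite{AK} on the critical exponent: by \cite[Theorem~C]{AK}, the power $\alpha$ preserves $\TP^{(\mathcal{N})}$ (on the Jain kernel, hence one expects in general) precisely when $\alpha > \mathcal{N}-2$ — this is the source of the interval $(\mathcal{N}-2,\infty)$. For the \emph{necessity} of $\alpha_{j_0} \in \mathbb{Z}_{>0} \cup (\mathcal{N}-2,\infty)$, i.e. ruling out $\alpha_{j_0} \in (k, k+1)$ with $1 \leq k \leq \mathcal{N}-3$ and $\alpha_{j_0} \notin \mathbb{Z}$: here I would again use the Jain kernel $(1 + x_ix_j)_{i,j=1}^{\mathcal{N}}$ restricted to a suitable $\mathcal{N}$-point increasing subset of $(0,\infty)$ (which is genuinely $\TP$, being a rescaled $\TP$ kernel — add a constant to a rank-one $\TP$ kernel), and invoke \cite[Theorem~C]{AK} to get that its $\alpha_{j_0}$-th Schur power has a negative $\mathcal{N} \times \mathcal{N}$ minor. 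Since that kernel is $\TP$ of all orders, this directly violates \eqref{ETPq} with $K_{j_0}$ the Jain kernel (padded by nothing — it already lives on the right-size domain, and if $|X|,|Y| > \mathcal{N}$ we pad by a $\TP$ extension, or simply restrict attention to the $\mathcal{N} \times \mathcal{N}$ minor) and the other $K_j$ constant-one kernels approached by $e^{xy/n}$.

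Finally, for the case $\mathcal{N} = \infty$ (so $\min\{|X|,|Y|,l\} = \infty$, meaning $X, Y$ infinite and $l = \infty$), I need $\alpha_{j_0} = 1$. The interval $(\mathcal{N}-2,\infty)$ degenerates, so only integer powers could possibly survive; I must then rule out $\alpha_{j_0} \in \mathbb{Z}_{>1}$. For this I would use a $\TP$ kernel on an infinite subset of $(0,\infty)$ whose entrywise square (or $\alpha$-th power with $\alpha$ an integer $\geq 2$) fails total positivity of some finite order — for instance a strictly totally positive P\'olya frequency function such as $K(x,y) = e^{-(x-y)^2}$ on $\mathbb{R}$ (or its restriction), or the kernel traced to Schoenberg \cite{Schoenberg55} and Karlin \cite{KarlinTAMS} mentioned in the introduction, whose Schur powers are known to break TP of order $m$ for each fixed $m$ once the power exceeds a threshold growing with $m$; applying this for large enough $m$ with $\alpha = \alpha_{j_0}$ fixed gives a negative $m \times m$ minor, contradicting $\TP^{(\infty)}$-ness of $F \circ {\bf K}$. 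I expect the main obstacle to be precisely this last step: producing, for \emph{every} integer $\alpha \geq 2$, an \emph{honestly strictly totally positive} (not merely $\TN$) kernel on an infinite domain whose $\alpha$-th Schur power is not $\TP^{(m)}$ for some $m = m(\alpha)$ — since padding by zeros is unavailable and one cannot pass to a limit without losing strictness, one must exhibit the test kernel explicitly, and this is exactly where the P\'olya frequency function / Schoenberg--Karlin kernel machinery advertised in the introduction is needed. The other steps (single-variable reduction, $k_{j_0} \geq \mathcal{N}$, and the $(\mathcal{N}-2,\infty)$ threshold) are, by contrast, relatively direct consequences of Theorem~\ref{TP-reduction}, Whitney density, and \cite[Theorem~C]{AK}.
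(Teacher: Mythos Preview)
Your overall strategy matches the paper's, but several of the concrete steps contain genuine gaps that the paper's proof has to work harder to close.

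\textbf{Uniqueness of $j_0$.} You propose to approximate the $\TN$ matrices $K_1,K_2,K_j$ by $\TP^{(\infty)}$ kernels on $X\times Y$ via Theorem~\ref{Twhitney2}. But Theorem~\ref{Twhitney2} only yields $\TP^{(k)}$ approximants for \emph{finite} $k$, and Theorem~\ref{TP-3} is stated explicitly ``without the extra assumption in part~(4)'' --- i.e.\ one must allow $k_j=\infty$ with $X,Y$ infinite. In that regime your test kernels must be honestly $\TP$ on all of $X\times Y$, and neither Whitney density nor a vague ``Vandermonde perturbation'' supplies them. The paper solves this with a specific construction: it chooses regular one-sided P\'olya frequency functions $\Lambda_j^{(a)}$ (see~\eqref{E1sidedPF}), pulls them back to $X\times Y$ via order-preserving maps $\varphi_X,\varphi_Y$, and approximates each $\Lambda_j^{(a)}$ by strictly totally positive PF functions (via \cite[Proposition~8.9]{BGKP-TN}). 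The resulting kernels are $\TP^{(\infty)}$ on $X\times Y$ for every $n$, and letting first $n\to\infty$ and then $a\to 0^+$ recovers the singular $3\times3$ matrix with determinant $-c^3$. This two-parameter PF-function limit is the missing ingredient in your argument.

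\textbf{Part (ii) and the case $\mathcal{N}=\infty$.} Two factual errors: first, the Jain kernel $(1+xy)$ on $(0,\infty)^2$ has rank two, hence is $\TP^{(2)}$ but \emph{not} $\TP^{(3)}$; it cannot serve as a $\TP^{(k_{j_0})}$ test kernel when $k_{j_0}\geq 3$. Second, the Gaussian $e^{-(x-y)^2}$ is useless for ruling out integer powers, since every entrywise power of a Gaussian Toeplitz kernel is again Gaussian and therefore $\TP$. The paper instead encapsulates the approximation in Lemma~\ref{LregularTN} (which shows $F[T_{\Lambda_1},\dots,T_{\Lambda_q}]\in\TN^{(\mathcal{N})}_{\mathbb{R}\times\mathbb{R}}$ for all regular PF functions $\Lambda_j$), and then applies it with $q=1$ to two carefully chosen PF functions: $\Omega(x)=xe^{-x}{\bf 1}_{x>0}$, whose powers $\Omega^\alpha$ fail $\TN^{(r)}$ precisely when $\alpha\in(0,r-2)\setminus\mathbb{Z}$ by \cite{KarlinTAMS,AK}, yielding~(ii); and $M_\gamma$ from~\eqref{EevenPF}, for which $M_\gamma^n$ is not $\TN$ for any integer $n>1$, forcing $\alpha_{j_0}=1$ when $\mathcal{N}=\infty$. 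You correctly anticipated that this last step is where the PF-function machinery is essential --- but in fact the same obstruction (producing $\TP^{(\infty)}$ test kernels on possibly infinite $X\times Y$) already bites at every earlier step, not just the final one.

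Finally, the claim $k_{j_0}\geq\mathcal{N}$ is not part of Theorem~\ref{TP-3}; it is established separately in the proof of Theorem~\ref{TmainTP}(3),(4).
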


Perhaps a curious coincidence to mention is that all uses of P\'olya frequency functions in this section appear in this one proof.

\begin{proof}
    Since $2\leq \mathcal{N}\leq \infty,$ Theorem~\ref{TP-reduction} gives that $F(\bt)= c \bt^{\boldsymbol{\alpha}}$ with $c > 0$ and ${\boldsymbol \alpha} \neq {\bf 0}.$ In particular, $F$ has a continuous extension to $[0,\infty)^p$, which we continue to denote by $F.$ (Here we retain the convention $0^{0} = 1$ whenever some $\alpha_{j}=0.$)
    
    We now temporarily ignore any mention or role of $\mathcal{N},$ and work out some preparation for this proof, including introducing the kernels that we will use. A function $\Lambda: \mathbb{R}\to \mathbb{R}$ is a \emph{P\'{o}lya frequency (PF) function} if $\Lambda$ is Lebesgue integrable, nonzero at $\geq 2$ points (which in fact implies on a semi-axis), and such that the associated Toeplitz kernel
    \begin{equation}\label{EToeplitzKernel}
    T_{\Lambda} : \mathbb{R}^{2} \to \mathbb{R}; \quad (x,y) \mapsto \Lambda(x-y)
    \end{equation}
    is totally nonnegative. These functions were introduced by Schoenberg in his landmark paper \cite{IJS-PF}, and a comprehensive account of the subject can be found in Karlin's monograph \cite{SK-TP}.

    It turns out that PF functions are discontinuous at most at one point (and if so, they vanish on an open semi-axis ending at this point). Moreover, all one-sided limits exist; see \cite{IJS-PF} for the proof. We say that a PF function $\Lambda$ is \emph{regular} if $\Lambda(a) = \frac{\Lambda(a^{+}) + \Lambda(a^{-})}{2},$ for all $a\in \mathbb{R}.$ We now mention the $\TN$ (not $\TP$, note) test kernels used in the proof below. Given scalars $a,\delta\in \mathbb{R}$ with $a>0,$ the map
    \begin{equation}\label{E1sidedPF}
        \Lambda_{a,\delta}(x) =
        \begin{cases}
            e^{-a(x-\delta)}, & \text{if}\ x > \delta, \\
            1/2, & \text{if}\ x = \delta, \\
            0, & \text{if}\ x < \delta
        \end{cases}
    \end{equation}
    is a regular P\'{o}lya frequency function -- it suffices to show this when $a=1, \delta = 0$, and in this case see \cite{AK} for the proof.

    With these preliminaries worked out, we return to the proof; now $3\leq \mathcal{N}\leq \infty.$ By the opening paragraph of this proof, $F(\bt)$ is a mixed power function $c  \bt^{\boldsymbol{\alpha}}.$ We claim there exists a unique $j_0 \in [p]$ such that $\alpha_{j_0} > 0.$ Suppose not -- say $\alpha_{1}, \alpha_{2} > 0.$ We make here a judicious choice of test kernels. Recall by Lemma~\ref{Ltptosets} that one can identify (via order-preserving bijections) $X, Y$ with subsets of $(0,\infty).$ Do so; then fix three points $\bx \in \inc{X}{3}$ and $\by \in \inc{Y}{3}.$ Now consider the order-preserving (injective) piecewise-linear maps $\varphi_{X}: X \to \mathbb{R}$ and $\varphi_{Y}: Y \to \mathbb{R}$ defined as follows:
    \[
\varphi_{X}(x) := 
\begin{cases}
    \frac{3}{2} + \frac{x-x_{2}}{x_{3}-x_{2}}, & \text{if}\ x\geq x_{2},\\
    \frac{3}{2} + \frac{x-x_{2}}{x_{2}-x_{1}}, & \text{if}\ x\leq x_{2},
\end{cases} \qquad
\varphi_{Y}(y) := 
\begin{cases}
     \frac{y-y_{2}}{y_{3} - y_{2}}, & \text{if}\ y\geq y_{2},\\
     \frac{y-y_{2}}{y_{2} - y_{1}}, & \text{if}\ y\leq y_{2}.
\end{cases}
    \]
    Thus, $\varphi_{X}: \bx \mapsto (\frac{1}{2}, \frac{3}{2}, \frac{5}{2})$ and $\varphi_{Y} : \by \mapsto (-1, 0 , 1).$ We now define the test kernels we will use: for each scalar $a>0$ and integer $1\leq j \leq p,$ define
    \[
\Lambda^{(a)}_{j}(x) :=
        \begin{cases}
            \Lambda_{a,0}(x), & \text{if}\ j = 1, \\
            \Lambda_{a,-3}(-x), & \text{if}\ j = 2, \\
            \Lambda_{a,-1}(x), & \text{if}\ j \geq 3.
        \end{cases}
    \]
    As mentioned in \cite{IJS-PF}, $\Lambda^{(a)}_{2}$ is (also) a regular PF function.

We can now complete the proof. As the PF functions $\Lambda^{(a)}_{j}$ are regular, by \cite[Proposition 8.9]{BGKP-TN} every $\Lambda^{(a)}_{j}$ can be approximated on $\mathbb{R}$ by a sequence of (strictly) totally positive P\'{o}lya frequency functions $\Lambda^{(a, n)}_{j} : \mathbb{R} \to (0,\infty).$ As $\varphi_{X}, \varphi_{Y}$ are order-preserving and injective, the kernels 
\[
K^{(a,n)}_{j} : X \times Y \to \mathbb{R}; \qquad (x,y)\mapsto T_{\Lambda^{(a,n)}_{j}}(\varphi_{X}(x), \varphi_{Y}(y)) = \Lambda^{(a,n)}_{j}(\varphi_{X}(x) - \varphi_{Y}(y))
\]
are also $\TP$ for all $j\in [p]$ and $n\geq 1.$ Thus by hypothesis, the sequence of kernels
\[
F \circ {\bf K}^{(a,n)} : X \times Y \to [0,\infty)
\]
are all $\TP^{(l)}.$ By the continuity on the closed orthant $[0,\infty)^{p}$ of (the extension of) $F,$ their pointwise limit is now $\TN^{(l)}$ on $X \times Y :$
\[
(x,y)\mapsto  F\bigg(\Lambda^{(a)}_{1}(\varphi_{X}(x) - \varphi_{Y}(y)),\dots, \Lambda^{(a)}_{p}(\varphi_{X}(x) - \varphi_{Y}(y))\bigg).
\]
As $l\geq \mathcal{N} \geq 3,$ the submatrix at $\bx,\by,$
\[
M^{(a)}_{3\times 3} := \bigg(F\bigg(\Lambda^{(a)}_{1}(\varphi_{X}(x_{i}) - \varphi_{Y}(y_{j})),\dots, \Lambda^{(a)}_{p}(\varphi_{X}(x_{i}) - \varphi_{Y}(y_{j}))\bigg)\bigg)_{i,j =1}^{3}
\]
has nonnegative determinant for each $a > 0.$ But $M^{(a)}$ is the Schur product of the Toeplitz matrices
\begin{align*}
    A_{1} &= c \begin{pmatrix}
e^{-3a\alpha_{1}/2} & e^{-a\alpha_{1}/2} & 0 \\ e^{-5a\alpha_{1}/2} & e^{-3a\alpha_{1}/2} & e^{-a\alpha_{1}/2} \\ e^{-7a\alpha_{1}/2} & e^{-5a\alpha_{1}/2} & e^{-3a\alpha_{1}/2} \end{pmatrix}, \\
A_{2} &=  \begin{pmatrix}
e^{-3a\alpha_{2}/2} & e^{-5a\alpha_{2}/2} & e^{-7a\alpha_{2}/2}\\  e^{-a\alpha_{2}/2} & e^{-3a\alpha_{2}/2} & e^{-5a\alpha_{2}/2}\\ 0 & e^{-a\alpha_{2}/2} & e^{-3a\alpha_{2}/2} \end{pmatrix}, \\
A_{j} &=  \begin{pmatrix}
e^{-5a\alpha_{j}/2} & e^{-3a\alpha_{j}/2} & e^{-a\alpha_{j}/2}\\  e^{-7a\alpha_{j}/2} & e^{-5a\alpha_{j}/2} & e^{-3a\alpha_{j}/2}\\ e^{-9a\alpha_{j}/2} & e^{-7a\alpha_{j}/2} & e^{-5a\alpha_{j}/2} \end{pmatrix}, \quad 3\leq j\leq p.
\end{align*}
In other words,
\[
M^{(a)} = c \begin{pmatrix}
e^{-a\kappa_{0}/2} & e^{-a\kappa_{-1}/2} & 0\\  e^{-a\kappa_{1}/2} & e^{-a\kappa_{0}/2} & e^{-a\kappa_{-1}/2}\\ 0 & e^{-a\kappa_{1}/2} & e^{-a\kappa_{0}/2} \end{pmatrix}
\]
where
\[
\kappa_{-1}= \alpha_{1} +5 \alpha_{2} + 3 \sum_{j=3}^{p} \alpha_{j}, \quad \kappa_{0}= 3\alpha_{1} + 3 \alpha_{2} + 5 \sum_{j=3}^{p} \alpha_{j}, \quad \kappa_{1} = 5 \alpha_{1} + \alpha_{2} + 7 \sum_{j=3}^{p} \alpha_{j}.
\]
are positive real scalars. Now since $\det M^{(a)}\geq 0$ for all $a> 0,$ it follows that 
\[
0 \leq \lim_{a\to 0^{+}} \det M^{(a)} = \det \lim_{a\to 0^{+}} M^{(a)} = c^3 \det \begin{pmatrix}
1 & 1 & 0 \\ 1 & 1 & 1 \\ 0 & 1 & 1 \end{pmatrix}.
\]
As this is false, so is the assumption that two $\alpha_{j} > 0.$\qed\medskip

It remains to show~(ii), and that $\alpha_{j_0} = 1$ if $\mathcal{N} = \infty$. We begin with a preliminary lemma.

\begin{lemma}\label{LregularTN}
Fix integers $q \in \mathbb{Z}_{>0}$ and $l \in \mathbb{Z}_{>0} \sqcup \{\infty\}$, and let $X,Y \subset (0,\infty)$ be such that $3 \leq \mathcal{N} = \min \{ |X|, |Y|, l \} \leq \infty$. If $F : [0,\infty)^q \to [0,\infty)$ is continuous and $F[-] : (\TP_{X \times Y})^q \to \TN_{X \times Y}^{(l)}$, then
$F[T_{\Lambda_1}, \dots, T_{\Lambda_q}] \in \TN^{(\mathcal{N})}_{\mathbb{R} \times \mathbb{R}}$  for all regular P\'olya frequency functions $\Lambda_1, \dots, \Lambda_q$.

A similar result holds for symmetric kernels: if $F[-] : (\STP_X)^q \to \STN_X^{(l)}$, and $|X| \geq 2l$, then $F[T_{\Lambda_1}, \dots, T_{\Lambda_q}] \in \STN^{(l)}_{\mathbb{R} \times \mathbb{R}}$  for all even P\'olya frequency functions $\Lambda_j$ (with $T_{\Lambda_j}$ as in~\eqref{EToeplitzKernel}).
\end{lemma}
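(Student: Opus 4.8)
The plan is to combine the two tools already deployed in this proof: the approximation of regular P\'olya frequency functions by strictly totally positive PF functions (\cite[Proposition~8.9]{BGKP-TN}), and the transport of a minor from $X\times Y$ to $\mathbb{R}\times\mathbb{R}$ by composing a Toeplitz kernel with order-preserving injective piecewise-linear maps. Concretely, to prove $F[T_{\Lambda_1},\dots,T_{\Lambda_q}] \in \TN^{(\mathcal{N})}_{\mathbb{R}\times\mathbb{R}}$ I would fix an integer $1 \leq r \leq \mathcal{N}$ and reals $s_1 < \cdots < s_r$, $t_1 < \cdots < t_r$, and reduce to showing
\[
\det \big( F(\Lambda_1(s_i - t_j), \dots, \Lambda_q(s_i - t_j)) \big)_{i,j=1}^{r} \geq 0,
\]
since this is exactly what $\TN^{(\mathcal{N})}_{\mathbb{R}\times\mathbb{R}}$ asks for.

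Because $r \leq \mathcal{N} \leq \min\{|X|,|Y|\}$, I can choose $x_1 < \cdots < x_r$ in $X$ and $y_1 < \cdots < y_r$ in $Y$, and then build order-preserving injective piecewise-linear maps $\varphi_X : X \to \mathbb{R}$, $\varphi_Y : Y \to \mathbb{R}$ with $\varphi_X(x_i) = s_i$ and $\varphi_Y(y_j) = t_j$ (linearly interpolate between the prescribed knots, and extend affinely with positive slope beyond the extreme knots), just as with the maps in the first part of this proof. For each $j$ use \cite[Proposition~8.9]{BGKP-TN} to pick strictly totally positive PF functions $\Lambda_j^{(n)} : \mathbb{R} \to (0,\infty)$ with $\Lambda_j^{(n)} \to \Lambda_j$ pointwise on $\mathbb{R}$. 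Then the kernels $K_j^{(n)}(x,y) := T_{\Lambda_j^{(n)}}(\varphi_X(x), \varphi_Y(y)) = \Lambda_j^{(n)}(\varphi_X(x) - \varphi_Y(y))$ (with $T_{\Lambda_j^{(n)}}$ as in~\eqref{EToeplitzKernel}) are totally positive of all orders on $X\times Y$, since any minor of $K_j^{(n)}$ at increasing tuples equals the corresponding minor of the $\TP$ kernel $T_{\Lambda_j^{(n)}}$ at their $\varphi_X$- and $\varphi_Y$-images, which is positive; hence $K_j^{(n)} \in \TP_{X\times Y}$. By hypothesis $F[K_1^{(n)},\dots,K_q^{(n)}] \in \TN^{(l)}_{X\times Y}$ for all $n$, so its $r\times r$ minor at $(x_1,\dots,x_r;\,y_1,\dots,y_r)$ is nonnegative (here $r\leq\mathcal{N}\leq l$). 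Finally, $K_j^{(n)}(x_i,y_j) = \Lambda_j^{(n)}(s_i-t_j) \to \Lambda_j(s_i-t_j)$ and $F$ is continuous on $[0,\infty)^q$, so letting $n\to\infty$ in that minor gives the displayed inequality.

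For the symmetric statement, note that $T_{\Lambda_j}$ is symmetric exactly because $\Lambda_j$ is even (an even PF function has no discontinuity point, so it is continuous, hence regular). The construction must be kept symmetric: given $1 \le r \leq l$ and $\bx,\by \in \inc{\mathbb{R}}{r}$, the set $\{s_1,\dots,s_r\}\cup\{t_1,\dots,t_r\}$ has at most $2r \leq 2l \leq |X|$ elements, so I can choose that many points of $X$ and build a \emph{single} order-preserving injective piecewise-linear $\varphi : X \to \mathbb{R}$ hitting all of them; approximating each even $\Lambda_j$ by even strictly totally positive PF functions (e.g.\ the Gaussian convolutions $\Lambda_j \ast \gamma_{1/n}$, which are even, strictly $\TP$, and converge pointwise) makes each $K_j^{(n)}(x,y) := \Lambda_j^{(n)}(\varphi(x)-\varphi(y))$ a symmetric $\TP$ kernel, i.e.\ an element of $\STP_X$. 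With these replacements the argument of the previous paragraph applies verbatim, using that $F[\mathbf{K}^{(n)}] \in \STN^{(l)}_X$, to conclude $F[T_{\Lambda_1},\dots,T_{\Lambda_q}] \in \STN^{(l)}_{\mathbb{R}\times\mathbb{R}}$.

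The point needing most care is the pointwise convergence $\Lambda_j^{(n)} \to \Lambda_j$ at the specific differences $s_i - t_j$, one of which may coincide with the single point at which some $\Lambda_j$ is discontinuous — this is precisely why the hypothesis is phrased for \emph{regular} (resp.\ even) PF functions. The remaining ingredients — existence of the piecewise-linear maps $\varphi_X,\varphi_Y$ (resp.\ $\varphi$), the fact that pulling a $\TP$ Toeplitz kernel back along order-preserving injections stays $\TP$, and, in the symmetric case, that the approximants can be chosen even so as to land in $\STP_X$ rather than merely $\TP_{X\times X}$ — are routine.
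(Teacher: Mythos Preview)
Your proposal is correct and follows essentially the same approach as the paper's own proof: in both, one fixes a target minor in $\mathbb{R}\times\mathbb{R}$, builds order-preserving piecewise-linear injections from $X,Y$ (or just $X$ in the symmetric case, using a single map that hits the merged set of at most $2l$ coordinates) into $\mathbb{R}$ to realize that minor, approximates each regular (resp.\ even) PF function by strictly $\TP$ (resp.\ even strictly $\TP$, via Gaussian convolution) PF functions, and passes to the limit using the continuity of $F$. Your remarks on why regularity/evenness is needed for the pointwise convergence, and on why a single $\varphi$ is required in the symmetric case, match the paper's treatment exactly.
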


\begin{proof}
Fix a finite integer $1 \leq r \leq \mathcal{N}$, and choose any points ${\bf a}, {\bf b} \in \inc{\mathbb{R}}{r}$. As $r \leq |X|$, we choose and fix an increasing $r$-tuple $\bx \in \inc{X}{r}$. Now define the order-preserving injection $\varphi_X : X \to \mathbb{R}$ given by:
$\varphi_X(x_i) := a_i$, and $\varphi_X$ is piecewise linear and strictly increasing on the intervals
\[
(-\infty, x_1) \cap X, \quad (x_1, x_2) \cap X, \quad \dots, \quad (x_{r-1}, x_r) \cap X, \quad (x_r, \infty) \cap X.
\]
Similarly choose $\by \in \inc{Y}{r}$ and define $\varphi_Y : Y \hookrightarrow \mathbb{R}$, sending each $y_i$ to $b_i$.

We now proceed. As above, we use \cite[Proposition 8.9]{BGKP-TN} to approximate each $\Lambda_j$ by a sequence of (strictly) totally positive PF functions $\Lambda_j^{(n)}$. Then define the kernels
\[
K_j^{(n)} : X \times Y \to \mathbb{R}, \qquad (x,y) \mapsto \Lambda_j^{(n)}(\varphi_X(x) - \varphi_Y(y)), \qquad j \in [q], \ n \geq 1.
\]

Thus $K_j^{(n)} \in \TP_{X \times Y}$ for all $j,n$, so the hypotheses yield that
\[
F[K_1^{(n)}, \dots,K_q^{(n)}][\bx; \by] = F[T_{\Lambda_1^{(n)}}, \dots, T_{\Lambda_q^{(n)}}][{\bf a}; {\bf b}] \in \TN_{r \times r}.
\]
As $F$ is continuous, letting $n \to \infty$ yields
$F[ T_{\Lambda_1}, \dots, T_{\Lambda_q}][{\bf a}; {\bf b}] \in \TN_{r \times r}$; as this holds for all ${\bf a}, {\bf b} \in \inc{\mathbb{R}}{r}$ and all $1 \leq r \leq \mathcal{N}$, the result follows.

The proof is similar in the ``symmetric'' case: as P\'olya frequency functions do not have compact support, each even PF function is nowhere vanishing and hence continuous. As in the proof of \cite[Proposition~8.9]{BGKP-TN} we approximate each $\Lambda_j$ by its convolution with the Gaussian density with variance $1/n$; each such is a totally positive even PF function, say $\Lambda_j^{(n)}$. Now let ${\bf a}, {\bf b} \in \inc{\mathbb{R}}{r}$ for a finite integer $1 \leq r \leq l$, and merge their coordinates in (strictly) increasing order into a real tuple ${\bf c} \in \mathbb{R}^m$, say. As $m \leq 2l \leq |X|$, we now choose points $x_1 < \cdots < x_m$ in $X$, and an order-preserving, piecewise linear injection $\varphi_X : X \to \mathbb{R}$ sending $x_i \mapsto c_i$ for all $i \in [m]$ as above. Now let $\bx := \varphi_X^{-1}({\bf a}), \by := \varphi_X^{-1}({\bf b}) \in \inc{X}{r}$.
Also define the kernels
\[
K_j^{(n)} : X \times X \to \mathbb{R}, \qquad (x,x') \mapsto \Lambda_j^{(n)}(\varphi_X(x) - \varphi_X(x')), \qquad j \in [q], \ n \geq 1.
\]
Then $K_j^{(n)} \in \STP_X$ for all $j,n$, so working as above, the result follows.
\end{proof}

We now return to the proof of~(ii), using the kernel
\begin{equation}
\Omega(x) := (\Lambda_{1,0} \ast \Lambda_{1,0})(x) = x e^{-x} {\bf 1}_{x>0}
\end{equation}
(see~\eqref{E1sidedPF}, and note from~\cite{IJS-PF} that PF functions are closed under convolution). The powers of this kernel were studied by Karlin in 1964~\cite{KarlinTAMS}, and he showed that for each integer $r \geq 2$, if $\alpha \in \mathbb{Z}_{\geq 0} \cup (r-2,\infty)$ then $\Omega(x)^\alpha$ is $\TN^{(r)}$. The converse was recently shown by one of us~\cite{AK}: if $\alpha \in (0,r-2) \setminus \mathbb{Z}$ then $\Omega(x)^\alpha$ is not $\TN^{(r)}$.

Now we put all of this together. From the opening paragraph of this proof, every preserver $F$ satisfying~\eqref{ETPq} extends to $[0,\infty)^p$ and equals $F({\bf t}) = c t_{j_0}^{\alpha_{j_0}}$ for some $j_0 \in [p]$ and $c, \alpha_{j_0} > 0$. Thus $F$ satisfies the hypotheses of Lemma~\ref{LregularTN} with $q=1$, so $F[\Omega] \in \TN^{(\mathcal{N})}_{\mathbb{R} \times \mathbb{R}}$. By the preceding paragraph, the positive power $\alpha_{j_0} \in \mathbb{Z}_{>0} \cup (\mathcal{N}-2, \infty)$.

Finally, if $\mathcal{N} = \infty$, we employ \cite[Lemma~8.4]{BGKP-TN}, which says that
\begin{equation}\label{EevenPF}
M_\gamma(x) := (\gamma+1) \exp(-\gamma |x|) - \gamma \exp(-(\gamma+1)|x|), \qquad \gamma > 0
\end{equation}
is a family of (regular) PF functions such that $M_\gamma(x)^n$ is not $\TN$ for any integer $n>1$ (and any real $\gamma > 0$). Again applying Lemma~\ref{LregularTN} with $q=1$, $F[M_\gamma] = M_\gamma^{\alpha_{j_0}}$ being $\TN$ implies that $\alpha_{j_0} = 1$.
\end{proof}

Theorems~\ref{TP-reduction} and~\ref{TP-3} show a majority of the assertions in our main result, Theorem~\ref{TmainTP} -- specifically, in parts~(2) and~(3), respectively. To show the final part~(4) requires a final preliminary result, which is now stated for general $q \geq 1$ but used below for $q=1$, and which isolates the use of Theorem~\ref{Twhitney2} in proving Theorem~\ref{TmainTP}(4).

\begin{proposition}\label{Pinflate}
Fix integers $q \in \mathbb{Z}_{>0}$ and $k_1, \dots, k_q \in \mathbb{Z}_{>0}$, and $l \in \mathbb{Z}_{>0} \sqcup \{\infty\}$. Also fix subsets $X,Y \subseteq \mathbb{R}$ of size at least 2, possibly infinite. Now suppose a function $F : [0,\infty)^q \to [0,\infty)$ is continuous, and such that the entrywise map $F[-] : \TP_{X \times Y}^{(k_1)} \times \cdots \times \TP_{X \times Y}^{(k_q)} \to \TN_{X \times Y}^{(l)}$.
Then for all $1 \leq m \leq |X|$ and $1 \leq n \leq |Y|$, and matrices $A_j \in \TN_{m \times n}^{(k_j)}$, the matrix $F[ A_1, \dots, A_q ] \in \TN_{m \times n}^{(l)}$.

A similar result holds for symmetric kernels: if $F[-] : \STP_X^{(k_1)} \times \cdots \times \STP_X^{(k_q)} \to \STN_X^{(l)}$ and $A_j \in \STN_{m \times m}^{(k_j)}$ for all $j$ (with $1 \leq m \leq |X|$), then $F[ A_1, \dots, A_q ] \in \STN_{m \times m}^{(l)}$.
\end{proposition}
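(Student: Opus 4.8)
The plan is to transfer the kernel hypothesis on $X\times Y$ to the finite matrices $A_j$ by realizing each $A_j$ faithfully along a fixed finite ``grid'' inside $X\times Y$. Since $m\le|X|$ and $n\le|Y|$, fix increasing tuples $\bx=(x_1<\cdots<x_m)\in\inc{X}{m}$ and $\by=(y_1<\cdots<y_n)\in\inc{Y}{n}$, chosen in the interior of $X\times Y$ (automatic when $X$ or $Y$ is finite, hence discrete; otherwise one avoids the finitely many boundary points of the interval-like pieces). We must show that every $r\times r$ minor of $F[A_1,\dots,A_q]$ with $r\le\min(m,n,l)$ is nonnegative. Here the key realization -- and the main obstacle -- is that the naive extension of $A_j$ to $X\times Y$ by padding with zeros (Lemma~\ref{Lpadding}), while totally nonnegative, is useless: it is discontinuous exactly at the grid points, so Theorem~\ref{Twhitney2} cannot approximate it there. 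One therefore needs an extension of $A_j$ that is simultaneously $\TN^{(k_j)}$ and continuous at the grid points.

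The extension I would use is of ``staircase'' type with plateaus at the grid. Pick non-decreasing maps $\sigma:X\to\{1,\dots,m\}$ and $\tau:Y\to\{1,\dots,n\}$ with $\sigma(x_i)=i$, $\tau(y_l)=l$, each constant on an interval around every grid point (e.g.\ with their jumps placed at midpoints of consecutive grid points), and set $\widehat K_j(x,y):=(A_j)_{\sigma(x),\tau(y)}$. Then $\widehat K_j:X\times Y\to[0,\infty)$ is bounded, agrees with $A_j$ on $\bx\times\by$, and is locally constant -- hence continuous -- at each grid point. Moreover it is $\TN^{(k_j)}_{X\times Y}$: for strictly increasing tuples $x^{(1)}<\cdots<x^{(r)}$ in $X$ and $y^{(1)}<\cdots<y^{(r)}$ in $Y$ with $r\le k_j$, the matrix $\bigl((A_j)_{\sigma(x^{(a)}),\tau(y^{(b)})}\bigr)_{a,b=1}^r$ either has two equal rows or two equal columns (when $\sigma$ or $\tau$ fails to be injective on the chosen points) or else is an honest $r\times r$ submatrix of $A_j$ indexed by increasing tuples; in both cases its determinant is $\ge0$.

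Next I would apply Theorem~\ref{Twhitney2} to $\widehat K_j$ (bounded and $\TN^{(k_j)}$ on $X\times Y\subseteq\mathbb{R}^2$, with $k_j<\infty$) to obtain $\TP^{(k_j)}$ kernels $K_j^{(N)}$ on $X\times Y$ converging to $\widehat K_j$ locally uniformly at points of continuity in the interior; in particular $K_j^{(N)}(x_i,y_l)\to\widehat K_j(x_i,y_l)=(A_j)_{il}$ as $N\to\infty$. (When $X,Y$ are finite this is just Whitney's density theorem \cite{Whitney}.) The hypothesis then gives $F[K_1^{(N)},\dots,K_q^{(N)}]\in\TN^{(l)}_{X\times Y}$ for every $N$, and since a restriction of a $\TN^{(l)}$ kernel to a finite grid is $\TN^{(l)}$,
\[
\bigl(F[K_1^{(N)},\dots,K_q^{(N)}]\bigr)[\bx;\by]=F\bigl[K_1^{(N)}[\bx;\by],\dots,K_q^{(N)}[\bx;\by]\bigr]\in\TN_{m\times n}^{(l)}.
\]
Letting $N\to\infty$ and using continuity of $F$ and of determinants, together with the closedness of $\TN_{m\times n}^{(l)}$, yields $F[A_1,\dots,A_q]\in\TN_{m\times n}^{(l)}$; note the argument is indifferent to whether $l$ is finite.

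For the symmetric statement one argues identically with $Y=X$ and a single tuple $\bx\in\inc{X}{m}$ (using $m\le|X|$): extend by the symmetric staircase $\widehat K_j(x,x'):=(A_j)_{\sigma(x),\sigma(x')}$, which is symmetric and $\STN_X^{(k_j)}$ by the same determinant dichotomy, approximate by symmetric $\TP^{(k_j)}$ kernels via the symmetric refinement of Theorem~\ref{Twhitney2}, apply the symmetric hypothesis, restrict to the principal submatrix on $\bx$, and pass to the limit -- observing that $F[A_1,\dots,A_q]$ is automatically symmetric because $F$ acts entrywise. As noted above, the crux throughout is the construction of a $\TN^{(k_j)}$ extension that is continuous at the chosen grid; once that is in hand, the approximation-and-limit argument is routine.
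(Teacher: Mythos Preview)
Your proof is correct and follows essentially the same approach as the paper. The paper also constructs a $\TN^{(k_j)}$ extension of each $A_j$ that is locally constant (hence continuous) at the chosen grid points, then invokes Theorem~\ref{Twhitney2} and passes to the limit using continuity of $F$; the only difference is cosmetic --- the paper's ``inflation'' sets the kernel equal to $(A_j)_{il}$ on small $\epsilon$-boxes around each $(x_i,y_l)$ and $0$ elsewhere, whereas your staircase construction via $\sigma,\tau$ fills in all of $X\times Y$ with nearest-grid values. Both extensions satisfy the same determinant dichotomy (repeated rows/columns, or a genuine submatrix of $A_j$), and both are continuous at the grid, so the rest of the argument is identical.
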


\begin{proof}
Choose and fix points $\bx \in \inc{X}{m}$ and $\by \in \inc{Y}{n}$, and choose any $\epsilon > 0$ such that
\[
\epsilon < \frac{1}{2}\min\{ x_{i + 1} - x_i, \ y_{j + 1} - y_j : i \in [m-1], \ j \in [n-1] \}.
\]
Now define the ``inflation'' to $X \times Y$ of a real $m \times n$ matrix $A$ to be the kernel
\[
K_A = K_{A,\epsilon} : X \times Y \to \mathbb{R}; \qquad ( x, y ) \mapsto %
\begin{cases}
a_{i j}, & \textrm{if } | x - x_i | < \epsilon \text{ and } | y - y_j | < \epsilon \quad (i\in [m], \ j \in [n]), \\
0, & \textrm{otherwise}.
\end{cases}
\]

Then each $K_{A_j}$ is a bounded $\TN^{(k_j)}$ kernel on $X \times Y$, since any square submatrix of size at most $k_j \times k_j$ either has a row or column of zeros, or two equal rows or columns, or is a square submatrix of $A_j$. Hence by Theorem~\ref{Twhitney2}, there exists a sequence of $\TP^{(k_j)}$ kernels $( K_{A_j}^{(N)} )_{n \geq 1}$ converging to $K_{A_j}$ at -- points of continuity -- $( x_i, y_j )$ for all $i,j$. (If $X,Y$ are finite then one can bypass this argument and appeal to Whitney density \cite{Whitney} directly.) The hypotheses yield that $F \circ {\bf K}^{(N)}_{\bf A} \in \TN_{X \times Y}^{(l)}$, whence $(F \circ {\bf K}^{(N)}_{\bf A})[ \bx; \by] \in \TN_{m \times n}^{(l)}$ for all $N$. As $F$ is continuous, taking $N \to \infty$ yields the result.
The proof of the symmetric counterpart is similar.
\end{proof}

With the above results at hand, we prove our main result.

 \begin{proof}[Proof of Theorem~\ref{TmainTP}]
     \begin{enumerate}
         \item Suppose $\mathcal{N}=1.$ Since we can only take minors of $F\circ \mathbf{K}$ that are of size $\mathcal{N}\times \mathcal{N}$ or smaller, $F$ can be any function taking positive values on the positive orthant.
         
         \item Henceforth, $\mathcal{N}\geq 2$. By Lemma~\ref{Ltptosets}, $X, Y$ can be embedded (via order-preserving bijections) into subsets of $(0,\infty)$. Moreover, by Theorem~\ref{TP-reduction}, every preserver is of the form $F(\bt) = c \bt^{\boldsymbol{\alpha}}$ with all $\alpha_j \geq 0$ and $\boldsymbol{\alpha} \neq {\bf 0}$. In particular, $F$ admits a continuous extension to $[0,\infty)^p$, which we will continue to denote by $F$.
         
         Now say $\mathcal{N}=2$. Then one implication follows from Theorem~\ref{TP-reduction}.
Conversely (for $\mathcal{N} = 2$), suppose $F$ is of the given
form, and $K_j \in \TP_{X \times Y}^{(k_j)}$ for all $j$. As $F$ is positive-valued,
$F \circ {\bf K}$ is $\TP^{(1)}$. Now fix $\bx \in \inc{X}{2}$ and $\by
\in \inc{Y}{2}$; if $k_j \geq 2$ then $K_{j}(x_1,y_1) K_{j}(x_2,y_2) >
K_{j}(x_1,y_2) K_{j}(x_2,y_1) > 0$. Raising both sides to the $\alpha_j$th power, and
multiplying over all $j$ with $k_j \geq 2$ (and also by $c > 0$), we
see that $\det (F \circ {\bf K})[ \bx; \by ] > 0$. Note that the case when all $k_{j} = 1$ is not admissible since if we take the test kernels $K_{j} = \widetilde{K}_{1}$ (see~\eqref{Ekernel} and thereafter) for all $j,$ then the above computation shows 
\[
0 < \det (F \circ {\bf K})[ \bx; \by ] =
c^p(1 - 4^{\alpha_{1} + \cdots + \alpha_{p}}),
\]
which is absurd since all $\alpha_j \geq 0$ and ${\boldsymbol \alpha} \neq {\bf 0}$. This concludes the proof for $\mathcal{N} = 2.$

     \item Next, suppose $\mathcal{N} = 3$. By Theorem~\ref{TP-3} there exists a unique $j_0 \in [p]$ such that $F(\bt) = c t_{j_0}^{\alpha_{j_0}}$ for some $c > 0$ and $\alpha_{j_0} \geq 1$. We now show that $k_{j_0} \geq \mathcal{N} = 3$. To do so, we employ the Jain--Karlin--Schoenberg kernel $K_{\mathcal{JKS}}: \mathbb{R} \times \mathbb{R} \to \mathbb{R}$, defined in \cite[Definition 1.13]{AK} by
\[
K_{\mathcal{JKS}}(x,y) := \max(1+xy, 0).
\]
Here, we consider the restriction of $K_{\mathcal{JKS}}$ to the domain $(0,\infty)^{2}$, where it equals $1+xy$. Now say $k_{j_0} \leq 2$. Fix $0 < \epsilon < 1$ and set
$K_{j_0} := K_{\mathcal{JKS}}^{{\alpha_{j_0}^{-1}}\epsilon}$,
and all other test kernels (which anyway do not get used) to equal any $\TP$ kernel, say $(x,y) \mapsto e^{xy}$. Then $K_{j_0} \in \TP^{(k_{j_0})}_{X \times Y}$ by \cite[Theorem~C]{AK}, so $F\circ {\bf K} \in \TP^{(\mathcal{N})}_{X\times Y}$ by the hypothesis. That is,
$K_{j_0}^{\alpha_{j_0}} = K_{\mathcal{JKS}}^\epsilon$ is in $\TP^{(3)}_{X\times Y}$. But this is false by \cite[Theorem~C]{AK}, since $\epsilon \in (0, 1)$. So $k_{j_0} \geq \mathcal{N}$.

Conversely, say $F(\bt) = c t_{j_0}^{\alpha_{j_0}}$ is of the desired
form, and $K_{j_0} \in \TP_{X \times Y}^{(k_{j_0})}$. Since we are only
considering $3 \times 3$ minors, it suffices to show that $\TP_{3 \times
3}$ is closed under taking entrywise $\alpha$th powers for $\alpha \geq
1$. But this is precisely \cite[Theorem 4.4]{FJS}.

\item Finally, let $4 \leq \mathcal{N}\leq \infty.$ Again by Theorem~\ref{TP-3} we get that $F(\bt) = c t_{j_0}^{\alpha_{j_0}}$ for a unique $j_0 \in [p]$. First suppose $\mathcal{N} = \infty$. Then Theorem~\ref{TP-3} implies that $\alpha_{j_0} = 1$, and it remains to show that $k_{j_0} = \infty$. (The converse, that if $k_{j_0} = \infty$ then each such homothety preserves $\TP_{X \times Y}^{(\infty)}$, is obvious.)
Thus, suppose to the contrary that $1 \leq k_{j_0} < \infty = \mathcal{N}$. Choose any $\delta \in (k_{j_0}-1, k_{j_0})$, and let the kernel $K_{j_0} := K_{\mathcal{JKS}}^{\delta}$. By \cite[Theorem~C]{AK}, $K_{j_0} \in \TP_{X \times Y}^{(k_{j_0})} \setminus \TP_{X \times Y}^{(\infty)}$. This contradicts that $c K_{j_0} \in \TP^{(\infty)}_{X \times Y}$.

The final sub-case is when $4 \leq \mathcal{N} < \infty$. As above, Theorem~\ref{TP-3} implies $F({\bf t}) = c t_{j_0}^{\alpha_{j_0}}$, with $\alpha_{j_0} \in \mathbb{Z}_{>0} \cup (\mathcal{N}-2,\infty)$.
We now invoke the hypotheses that all $k_{j_0} < \infty$ if $X,Y$ are infinite. Now the goal is to apply Proposition~\ref{Pinflate} with $q=1$ for (the extension to $[0,\infty)$ of) $t^{\alpha_{j_0}}$; for this, we need to ensure that the hypotheses of Proposition~\ref{Pinflate} follow from those of Theorem~\ref{TmainTP}(4). Set $k'_{j_0} := \min \{ k_{j_0}, |X|, |Y| \} < \infty$, and apply Proposition~\ref{Pinflate} with
\[
q=1, \quad k_{j_0} \leadsto k'_{j_0}, \quad m=n=4, \quad F(t) = c t^{\alpha_{j_0}},
\]
and $A$ the $\TP_{4\times 4}$ matrix in \cite[Example 5.8]{FJS} (for suitable $x,\epsilon > 0$). This yields $\alpha_{j_0} = 1$.

Finally, we claim that $k_{j_0} \geq \mathcal{N} \geq 4$. This concludes the proof, since as above, the converse assertion (that if $k_{j_0} \geq \mathcal{N}$ then homotheties $c t_{j_0}$ satisfy~\eqref{ETPq}) is immediate. To show the claim, suppose for contradiction that $1 \leq k_{j_0} < \mathcal{N} < \infty$. Again use the Jain--Karlin--Schoenberg kernel: choose $\delta \in (\mathcal{N}-3, \mathcal{N} - 2)$, and let $K_{j_0} := K_{\mathcal{JKS}}^{\delta}$ on $X \times Y \subseteq (0,\infty)^2$. By \cite[Theorem~C]{AK},
\[
K_{j_0} \in \TP_{X \times Y}^{(\mathcal{N}-1)} \setminus \TP_{X \times Y}^{(\mathcal{N})}
\subseteq \TP_{X \times Y}^{(k_{j_0})} \setminus \TP_{X \times Y}^{(\mathcal{N})}.
\]
This contradicts that $c K_{j_0} \in \TP^{(l)}_{X \times Y}$, since $l \geq \mathcal{N}$.\qedhere
\end{enumerate}
 \end{proof}

\section{Transforms of symmetric $\TN$ kernels}

We next turn to the symmetric counterparts, starting with Theorem~\ref{TmainSTN}. A crucial part of the proof involves showing continuity:

\begin{proposition}\label{Pmidconvex}
Given an integer $p \geq 1$ and a function $F : (0,\infty)^p \to [0,\infty)$, if $F[-] : (\STP_{2 \times 2})^p \to \STN_{2 \times 2}$, then $F$ is nonnegative, jointly non-decreasing, multiplicatively mid-convex, and continuous.
\end{proposition}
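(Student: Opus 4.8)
\emph{Reformulating the hypothesis.} The assumption $F[-]:(\STP_{2\times 2})^p\to\STN_{2\times 2}$ is equivalent to a single family of functional inequalities. Indeed, applying $F$ entrywise to the symmetric matrices $K_j=\begin{pmatrix} a_j & b_j\\ b_j & d_j\end{pmatrix}$ — which lie in $\STP_{2\times 2}$ exactly when $a_j,b_j,d_j>0$ and $a_jd_j>b_j^2$ — produces $\begin{pmatrix}F(\mathbf{a}) & F(\mathbf{b})\\ F(\mathbf{b}) & F(\mathbf{d})\end{pmatrix}$, whose membership in $\STN_{2\times 2}$ says precisely
\[
F(\mathbf{a})\,F(\mathbf{d}) \;\geq\; F(\mathbf{b})^2 \qquad\text{whenever } b_j<\sqrt{a_j d_j}\ \text{ for all } j\in[p].
\]
Nonnegativity of $F$ is built into the codomain. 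For joint monotonicity, given $\bt<\bt'$ coordinatewise I would apply this with $K_j=\begin{pmatrix} t'_j & t_j\\ t_j & t'_j\end{pmatrix}\in\STP_{2\times 2}$, obtaining $F(\bt')^2\geq F(\bt)^2$ and hence $F(\bt')\geq F(\bt)$; so $F$ is jointly non-decreasing. Multiplicative mid-convexity I would postpone: once $F$ is known to be continuous, feeding in $K_j=\begin{pmatrix}(1+\epsilon)t_j & \sqrt{t_jt'_j}\\ \sqrt{t_jt'_j} & (1+\epsilon)t'_j\end{pmatrix}\in\STP_{2\times 2}$ gives $F((1+\epsilon)\bt)\,F((1+\epsilon)\bt')\geq F(\sqrt{t_1t'_1},\dots,\sqrt{t_pt'_p})^2$, and letting $\epsilon\to0^+$ yields the claim.

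\emph{Continuity: the crux.} Being jointly non-decreasing, $F$ has a Lebesgue-null set of discontinuities (Theorem~\ref{Discts}) and is in particular Lebesgue measurable (Proposition~\ref{Lebmeas}); hence $F$ has at least one continuity point $\mathbf{a}_0\in(0,\infty)^p$. The plan is then to bootstrap continuity everywhere from $\mathbf{a}_0$, using only the functional inequality. It is convenient to pass to $\phi:=\log\circ F\circ\exp$ on the up-set $\{\mathbf{u}:F(e^{\mathbf{u}})>0\}$ (on the complementary down-set $F\equiv0$, which is harmless and already of the asserted form): there the inequality becomes $\phi(\boldsymbol{\alpha})+\phi(\boldsymbol{\delta})\geq2\phi(\boldsymbol{\beta})$ whenever $\beta_j<\tfrac{\alpha_j+\delta_j}{2}$ for all $j$, i.e.\ $\phi$ is jointly non-decreasing, measurable, and dominates its midpoint averages in a one-sided sense. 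I would then use $\mathbf{a}_0$ to pin down each one-variable section $g_j(t):=F(t_1,\dots,t,\dots,t_p)$: sandwiching $F$ near an arbitrary point between a lower and an upper estimate extracted from the functional inequality, with the off-diagonal data chosen so that $F$ is evaluated near $\mathbf{a}_0$, should force each $g_j$ to be monotone and continuous; then the passage from separate to joint continuity via \cite{Young-cts} and \cite[Proposition~2]{KD-cts} — exactly as in the proof of Proposition~\ref{prop-TP continuity} — gives continuity of $F$ on $(0,\infty)^p$. Alternatively, one can try to derive genuine midpoint-convexity of $\phi$ (or of a suitable regularization of it) and invoke the classical fact that a measurable midpoint-convex function is convex, hence continuous.

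\emph{The main obstacle.} The difficulty is concentrated in this transport step. The hypothesis only ever bounds an off-diagonal value $F(\mathbf{b})^2$ by a product $F(\mathbf{a})\,F(\mathbf{d})$ with $\mathbf{a},\mathbf{d}$ ``geometrically larger'' than $\mathbf{b}$; consequently every naive manipulation yields estimates in the direction already implied by monotonicity, and the reverse estimates needed for continuity must be squeezed out by carefully placing the continuity point $\mathbf{a}_0$ into the data and propagating the resulting control across $(0,\infty)^p$ by a connectedness/covering argument. Once continuity is secured, multiplicative mid-convexity follows from the perturbed-matrix computation recorded above, completing the proof.
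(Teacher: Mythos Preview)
Your reduction of the hypothesis to the functional inequality, and your arguments for nonnegativity and joint monotonicity, match the paper. The gap is exactly where you flag it: continuity. Your first strategy --- transporting control from a continuity point $\mathbf{a}_0$ to arbitrary points by pinning down each one-variable section $g_j$, as in Proposition~\ref{prop-TP continuity} --- does not go through here. That earlier proof relied essentially on \emph{asymmetric} test matrices such as $\begin{pmatrix} t'_j & a_j\\ t_j & a_j\end{pmatrix}$, which let one decouple the two diagonal entries and place $\mathbf{a}_0$ in one column while varying $\bt$ in the other. With only $\STP_{2\times 2}$ at hand you cannot do this: every symmetric test matrix forces $F$ to be evaluated at the \emph{same} point on both diagonal entries, so there is no mechanism to anchor one diagonal entry at $\mathbf{a}_0$ while moving the other. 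The paper remarks on precisely this obstruction.

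Your second alternative --- prove midpoint-convexity for a regularization of $\phi$ and invoke the measurable-midconvex-implies-convex theorem --- is the right one, and the paper executes it. The missing concrete step is the choice of regularization: define $F^\nearrow(\bt):=\lim_{\epsilon\to0^+}F(\bt+\epsilon\mathbf{1})$ (which exists by joint monotonicity), and feed in the \emph{additively} perturbed matrices $K_j^{(\epsilon)}=\begin{pmatrix} t_j+\epsilon & \sqrt{t_jt'_j}+\epsilon\\ \sqrt{t_jt'_j}+\epsilon & t'_j+\epsilon\end{pmatrix}\in\STP_{2\times 2}$. Letting $\epsilon\to0^+$ shows $F^\nearrow$ itself is multiplicatively mid-convex. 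Restricted to any diagonal line $\mathcal{L}_\bt=\{\bt+s\mathbf{1}\}$, $F^\nearrow$ is non-decreasing (hence measurable) and mid-convex after a $\log$, so Blumberg--Sierpi\'nsky/Ostrowski gives continuity of $F^\nearrow$ along $\mathcal{L}_\bt$. Since $F$ is monotone on $\mathcal{L}_\bt$ with at most countably many jumps and $F^\nearrow$ is its right-limit, continuity of $F^\nearrow$ forces $F=F^\nearrow$ on every diagonal. Joint monotonicity then upgrades diagonal continuity to full continuity via a cube sandwich, and your $\epsilon\to0^+$ argument for mid-convexity of $F$ goes through. The point is that the regularization and the diagonal-line reduction together circumvent the need for any continuity point $\mathbf{a}_0$.
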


Notice that this result is stronger than Proposition~\ref{prop-TP continuity}, because its hypotheses are weaker. That said, we have retained that proof (in the Appendix) because it is self-contained, whereas the proof here uses a nontrivial classical result -- which has an involved proof -- to show continuity. Moreover, the two proofs are necessarily distinct because the earlier one uses judiciously chosen asymmetric test matrices, and the proof below necessarily uses only symmetric ones.

\begin{proof}
That $F$ is nonnegative immediately follows from $F[-]$ being $\TN_{2 \times 2}$. Next say $0 < t_j < t'_j$ for all $j \in [p]$. Let $K_j := \begin{pmatrix} t_j' & t_j \\ t_j & t'_j \end{pmatrix}$ for all $j$; then $F[ {\bf K}] \in \TN_{2 \times 2}$. Thus
$0 \leq \det(F[{\bf K}])= F(\bt')^2-F(\bt)^2,$
whence $F(\bt') \geq F(\bt).$

The proof of multiplicative mid-convexity is in two steps, which are separated by the proof of continuity. Namely, first given ${\bf t} \in (0,\infty)^p$, define
\[
g_{\bf t} : [0, \infty) \to \mathbb{R}; \qquad \epsilon \mapsto F(\bt + (\epsilon,\dots,\epsilon)).
\]
As $g_{\bf t}$ is non-decreasing (from above) and nonnegative, one can now define the function
\[
F^\nearrow : (0,\infty)^p \to [0,\infty); \qquad \bt \mapsto \lim_{\epsilon \to 0^+} F(\bt + (\epsilon,\dots,\epsilon)).
\]

The plan for the proof is now as follows:
(a)~we first claim that $F^\nearrow$ is multiplicatively mid-convex;
(b)~we next use this to show that $F^\nearrow$ is continuous along the main diagonals $(1,\dots,1)$;
(c)~this is used to show the continuity of $F$ on the entire orthant;
(d)~and already from~(b) we see that $F = F^\nearrow$ is multiplicatively mid-convex.

We begin with~(a). Let $\bt, \bt' \in (0,\infty)^p$, and consider the test matrices
\[
K_j^{(\epsilon)} := \begin{pmatrix} t_j + \epsilon & \sqrt{t_j t'_j} + \epsilon \\ \sqrt{t_j t'_j} + \epsilon & t'_j + \epsilon \end{pmatrix} \in \STP_{2 \times 2}, \quad \epsilon > 0.
\]
Then $F[ {\bf K}^{(\epsilon)} ] \in \STN_{2 \times 2}$, so
\[
0 \leq \lim_{\epsilon \to 0^+} \det F[ {\bf K}^{(\epsilon)} ] = F^\nearrow(\bt) F^\nearrow(\bt') -  F^\nearrow \left( \sqrt{t_1 t'_1}, \dots, \sqrt{t_p t'_p} \right)^2.
\]
Thus $F^\nearrow$ is multiplicatively mid-convex. Now fix $\bt$ and restrict $F$ to the ``diagonal semi-axis''
\[
\mathcal{L}_\bt =
\{ \bt(\epsilon) := \bt + (\epsilon, \dots, \epsilon) : - \| \bt \|_\infty < \epsilon < \infty \}.
\]
On this set, $F^\nearrow(\bt(\epsilon))$ is non-decreasing in $\epsilon$ and hence a measurable function of one real variable. It follows either from \cite[Theorem 71.C]{RV} (which is modeled after a result by Ostrowski \cite{Ostr}), or else by a result originally due to Blumberg \cite{Blumberg} and Sierpi\'nsky \cite{Sierpinsky}, that $F^\nearrow$ is continuous on $\mathcal{L}_\bt$ for all $\bt \in (0,\infty)^p$. This shows~(b).

Next, note that along the diagonals $\mathcal{L}_\bt$, $F$ is non-decreasing and hence has only countably many jump discontinuities. Hence so does $F^\nearrow$; but $F^\nearrow$ is continuous on $\mathcal{L}_\bt$, so $F$ is as well -- and $F^\nearrow \equiv F$ on $\mathcal{L}_\bt$. This also shows~(d).

Finally, to show~(c) we fix $\bt_0 \in (0,\infty)^p$ and show that $F$ is continuous at $\bt_0$. Given $\epsilon > 0$, there exists $\delta \in (0, \| \bt_0 \|_\infty)$ such that if  $|s| < \delta$ then $F(\bt_0 + (s,\dots,s)) = F^\nearrow(\bt_0 + (s,\dots,s))$ takes values in $(F(\bt_0) - \epsilon, F(\bt_0) + \epsilon)$. Now the open ball of radius $\delta$ is inscribed in the open cube with the same center and edge length $2 \delta$; and $F$ is jointly non-decreasing. These imply that if $\| \bt - \bt_0 \| < \delta$ then $|F(\bt) - F(\bt_0)| < \epsilon$, proving that $F$ is continuous on $(0,\infty)^p$.
\end{proof}

We next prove a $\STN$ completion result that is useful below.

\begin{proposition}\label{TNcompletion}
    Every $\TN_{2 \times 2}$ matrix can be embedded into a $\STN_{3 \times 3}$ matrix.
\end{proposition}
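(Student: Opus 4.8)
The goal is to take an arbitrary $\TN_{2\times 2}$ matrix $A = \begin{pmatrix} a & b \\ c & d \end{pmatrix}$ with $a,b,c,d \geq 0$ and $ad \geq bc$, and produce a symmetric $3\times 3$ matrix $M$ that is $\TN$ (i.e.\ $\TN^{(3)}$) and contains $A$ as a submatrix — most naturally as the top-right $2\times 2$ block of a $3 \times 3$ matrix, so that $M$ has the shape
\[
M = \begin{pmatrix} p & a & b \\ a & q & c \\ b & c & r \end{pmatrix}
\]
for suitable nonnegative $p,q,r$ to be chosen. Notice that with this placement, the submatrix $M[\{1,2\};\{2,3\}] = \begin{pmatrix} a & b \\ q & c\end{pmatrix}$ is not $A$; the cleaner choice is to make $A$ the submatrix on rows $\{1,2\}$ and columns $\{2,3\}$, i.e.\ set $M_{12} = a$, $M_{13} = b$, $M_{22} = c$... — one should simply pick the indexing that lands $A$ literally in a $2\times 2$ window and then check all minors. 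Concretely I would try $M = \begin{pmatrix} r & b & a \\ b & s & c \\ a & c & t \end{pmatrix}$ (so that rows $\{1,2\}$, columns $\{3,2\}$ give $\begin{pmatrix} a & b \\ c & s\end{pmatrix}$) — again the exact arrangement is a bookkeeping matter; the substantive content is the existence of the three diagonal entries.

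**Key steps.** First, reduce to the case where $A$ is strictly $\TP_{2\times2}$: by Whitney's density theorem (or the elementary observation that $A + \epsilon\begin{pmatrix} 2 & 1 \\ 1 & 1\end{pmatrix}$ is $\TP_{2\times2}$ for $\epsilon>0$ when $A$ is $\TN_{2\times2}$), it suffices to embed $\TP_{2\times2}$ matrices into $\STN_{3\times3}$ matrices with all entries depending continuously on $A$, and then pass to the limit — the $\STN^{(3)}$ condition (finitely many non-strict determinantal inequalities) is closed. Alternatively, handle the general $\TN_{2\times2}$ case directly. Second, for the $\TP$ (or general) case, choose the missing entries. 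The constraints are: all $2\times 2$ minors nonnegative and the $3\times3$ determinant nonnegative, plus symmetry. With $A = \begin{pmatrix} a & b \\ c & d\end{pmatrix}$ placed appropriately, the remaining free entries are the three "new" matrix positions (two diagonal entries plus one off-diagonal, or all three diagonal entries, depending on placement). I would make these large: take the two new diagonal entries to be a common large value $t$, and show that for $t$ sufficiently large all required minors are nonnegative. The point is that the $3\times3$ determinant, expanded along the row/column containing $t$, is dominated by a positive multiple of $t^2$ (or $t$, depending), and similarly each $2\times2$ minor involving a new entry is $\geq 0$ once $t \geq$ (some explicit bound in terms of $a,b,c,d$); the minors not involving new entries are exactly the minors of $A$, which are nonnegative by hypothesis.

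**Main obstacle.** The real work is checking that a \emph{single} choice of the new entries simultaneously satisfies \emph{all} the $2\times2$ minor inequalities, not just the $3\times3$ determinant. There are three $2\times2$ windows in a $3\times3$ matrix, and in the symmetric case the new off-diagonal entry (if any) appears in two of them, coupling the constraints; one must verify there is no conflict — e.g.\ a minor that forces the new off-diagonal entry up while another forces it down. I expect that setting the new \emph{off-diagonal} entries to $0$ and the new \emph{diagonal} entries to a large common value $t$ decouples everything: a new off-diagonal entry equal to $0$ makes the relevant $2\times2$ minors equal to $-(\text{product of two entries of } A) \cdot(\text{something})$... no — that would be negative. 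So in fact one cannot make a new off-diagonal entry too small either; it must be chosen in a window $[L(A), U(A)]$, and the crux is showing this window is nonempty, i.e.\ $L(A) \leq U(A)$, which should follow from $ad \geq bc$ (possibly after first normalizing $A$ by positive diagonal rescalings on left and right, which preserve $\TN_{2\times2}$ and reduce the number of free parameters). Once the window is shown nonempty for $\TP_{2\times2}$ matrices via a short computation, the density/limiting argument finishes the $\TN_{2\times2}$ case and hence the proposition.
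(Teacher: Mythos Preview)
Your overall shape is right: place $A$ in rows $\{1,2\}$, columns $\{2,3\}$ of a symmetric $3\times 3$ matrix $M$, so that $M_{12}=a$, $M_{13}=b$, $M_{22}=c$, $M_{23}=d$, and then choose the remaining entries. But note that with this placement the off-diagonals are \emph{already determined} by symmetry ($M_{21}=a$, $M_{31}=b$, $M_{32}=d$); the only free parameters are the two diagonal entries $M_{11}$ and $M_{33}$. So there is no ``new off-diagonal entry'' to worry about, and the window $[L(A),U(A)]$ discussion is aimed at the wrong target.

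The genuine gap is in the density/limit step. The $2\times 2$ minor constraints include $M_{11}\,c \geq a^2$, $M_{33}\,c \geq d^2$, and $a\,M_{33} \geq bd$. For $A\in\TP_{2\times 2}$ these are all lower bounds and can be met by taking $M_{11},M_{33}$ large, as you say. But these lower bounds blow up as $A$ approaches the boundary of $\TN_{2\times 2}$: if $c\to 0$ with $a>0$ or $d>0$, then $M_{11}$ or $M_{33}$ must tend to $\infty$, so the embedding cannot be chosen continuously in $A$ and the limit $M_\epsilon$ need not exist. Concretely, for $A=\begin{pmatrix}0&b\\0&d\end{pmatrix}$ with $b,d>0$ (which is $\TN_{2\times 2}$), the constraint $a\,M_{33}\geq bd$ reads $0\geq bd>0$ and is outright infeasible with this placement. (Incidentally, your explicit perturbation $A+\epsilon\begin{pmatrix}2&1\\1&1\end{pmatrix}$ is not always $\TP_{2\times 2}$: try $A=\begin{pmatrix}0&1\\0&0\end{pmatrix}$.) The diagonal-rescaling idea does not rescue this either, since for symmetric $M$ only congruences $DMD$ are available, and these do not realize arbitrary independent left/right scalings of the $2\times 2$ block.

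The paper sidesteps the limit entirely by a short case split. If $b=c=0$ the matrix is already symmetric $\TN$ and one pads by zeros. If $c>0$, one fixes $M_{33}=(d^2+1)/c$; then every remaining $2\times 2$ and $3\times 3$ minor condition is a \emph{lower} bound on $M_{11}$, so any sufficiently large value works. If $c=0$ but $b>0$, one transposes $A$ (so the role of $c$ is played by $b>0$) and runs the same construction, recovering $A$ in the bottom-left corner of the resulting $\widetilde{A}$. This avoids any continuity requirement and handles all $\TN_{2\times 2}$ matrices directly.
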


\begin{proof}
Let $A = \begin{pmatrix}
        a & b \\ c & d 
    \end{pmatrix}$ 
    be any $\TN$ matrix. First if $b=c=0$ then $A \in \STN_{2 \times 2}$, hence can be padded by zeros to land in $\STN_{3 \times 3}$. Next if say $c \neq 0$ (so $c>0$), we embed $A$ into the matrix
      \begin{equation}\label{ETNcompletion}
        \Tilde{A} = \begin{pmatrix}
       \ast  & a & b \\ a & c & d \\ b & d & \frac{d^2 +1}{c}.
    \end{pmatrix}
      \end{equation}
  Note that $\Tilde{A}$ is partial $\TN$, i.e., all minors not including $\ast$ are already nonnegative.
  Thus, $\Tilde{A} \in \STN_{3 \times 3}$ if and only if $\ast$ satisfies the inequalities
  (a)~$d \cdot \ast \geq ab$ (which is feasible if $d>0$; and is trivial if $d=0$, since then $b=0$ as $A$ is $\TN$);
  (b)~$\ast \geq 0$;
  (c)~$\ast \geq a^2/c$;
  (d)~$\ast \geq b^2c/(d^2+1)$;
  (e)~$\det \Tilde{A} \geq 0$, i.e.\ $\ast \geq -2abd + b^2c + a^2(d^2+1)/c$.
As these are simultaneously feasible, such an $\Tilde{A} \in \STN_{3 \times 3}$ exists.

The final case is if $b \neq 0 = c$. In this case, we embed $B := A^T$ into $\Tilde{B}$, and then complete it as above. Then the bottom left $2 \times 2$ submatrix of $\Tilde{B} \in \STN_{3 \times 3}$ is $A$.
\end{proof}

With Propositions~\ref{Pmidconvex} and \ref{TNcompletion} at hand, we proceed to the main result.

\begin{proof}[Proof of Theorem~\ref{TmainSTN}]
 The result is obvious for $\mathcal{N}=1.$
 
    \begin{enumerate}
    \item Before working out the $\mathcal{N}=2$ case, we first develop some results that will apply in all parts below. Thus, henceforth we assume $2 \leq \mathcal{N} \leq \infty$. First suppose $F[-]$ satisfies~\eqref{ESTPq}. We now apply Proposition~\ref{Pmidconvex} (even with the weaker assumptions) to get that $F$ is nonnegative, jointly non-decreasing, multiplicatively mid-convex, and hence continuous on $(0,\infty)^p$.

    Now suppose moreover that $3 \leq |X| \leq \infty$; choose and fix $\bx \in \inc{X}{3}$. We show that up to rescaling, $F$ is a product of power functions and Heaviside indicators as in Theorem~\ref{Tpowers}. The idea is to repeat the steps in that proof, with the necessary modifications via embedding the test $\TN_{2 \times 2}$ matrices (padded by zeros) within $\STN_{3 \times 3}$ matrices (padded by zeros). Thus, henceforth we work exclusively with matrices in $\STN_{3 \times 3}$ (arising from the kernels evaluated at $[\bx; \bx]$), and avoid mentioning the padding by zeros to $X \times X$.

We begin with Step~1 of the proof of Theorem~\ref{Tpowers} -- i.e., Lemma~\ref{Lclaim}. Fix an index $j \in [p]$ and scalars $t_1, \dots, \widehat{t_j}, \dots, t_p > 0$. Let the $\STN$ kernels $K_i[\bx; \bx] = t_i {\bf 1}_{3 \times 3}$ for $i \neq j$, and let $K_j[\bx; \bx]$ run over $\STN_{3 \times 3}$. Then $g_j(-) := F(t_1,\dots, -, \dots, t_p)$ entrywise maps $\STN_{3\times 3}\to \STN^{(2)}_{3\times 3}$ (since $|X|\geq 3$), by Lemma~\ref{Lpadding}. By Proposition~\ref{TNcompletion}, $g_j[-] : \TN_{2\times 2}\to \TN_{2\times 2}$, and \cite[Theorem 3.3]{BGKP-TN} now gives up to rescaling that $g_j(-)$ equals a power or Heaviside indicator. In particular, $F$ satisfies the hypotheses of Lemma~\ref{Lclaim} on $(0,\infty)^p$, and so $F(\bt) = c \bt^{\boldsymbol \alpha}$ for $\bt > {\bf 0}$, with $c > 0$ and $\alpha_j \geq 0\ \forall j \in [p]$.

Next, we suitably repeat Step~2. Namely, as all matrices in~\eqref{Estep21} and~\eqref{Estep22} are symmetric and $\TN$, one can attach a zero row and column at the end of each (and then pad by zeros to $X \times X$). Now the determinantal inequalities deduced there imply that the conclusions of Step~2 hold in the present situation as well.

We next come to Step~3 part~(1). Here, all test matrices in~\eqref{Estep31} have $(2,1)$ entry $1$ -- hence by the discussion around~\eqref{ETNcompletion}, they all embed in the top right $2 \times 2$ ``corner'' of matrices $\widetilde{A}_j \in \STP_{3 \times 3}^{(2)}$. Now the non-negativity of the minor $F[ {\bf A} ][(1,2); (2,3)]$ yields the conclusions of Step~3 part~(1).

Finally, we come to Step~3 part~(2). Thus, $t_{j_0} > 0$ for all $j_0 \in J_{00}$, and we need to show~\eqref{Eclaim1}, where $J_0 \setminus J_{00} = [p_0]$ and $[p] = [p_0] \sqcup J_{00} \sqcup J_+$. We proceed by induction on $l \in [0,p_0]$. The base case of $l=0$ is true via Step~1; now let the rank one $\STN$ matrices
\begin{equation}
A_1 := \begin{pmatrix} 1 & 0 & 1 \\ 0 & 0 & 0 \\ 1 & 0 & 1 \end{pmatrix}, \qquad
A_2 = \cdots = A_l := \begin{pmatrix} 1 & 1 & 0 \\ 1 & 1 & 0 \\ 0 & 0 & 0 \end{pmatrix}, \qquad
A_j := \begin{pmatrix} 1 & 1 & t_j \\ 1 & 1 & t_j \\ t_j & t_j & t_j^2 \end{pmatrix}, \ j>l.
\end{equation}
Thus $F[{\bf A}] \in \STN_{3 \times 3}^{(2)}$ by the hypotheses, so by the induction hypothesis for $l-1$ we get:
\begin{align*}
0 \leq \det F[ {\bf A} ][(1,2); (1,3)] = &\ F({\bf 1}_p) F({\bf 0}_l, \bt'') - F(1, {\bf 0}_{l-1}, \bt'') F(0, {\bf 1}_{p-1})\\
= &\ c \left( F({\bf 0}_l, \bt'') - c \prod_{j \in J_+} t_j^{\alpha_j} \right),
\end{align*}
where $F(0, {\bf 1}_{p-1}) = c$ by definition of $J_0, J_{00}$ (and not by induction). Similarly,
\begin{align*}
0 \leq \det F[ {\bf A} ][(2,3); (2,3)] = &\ F(0, {\bf 1}_{p-1}) F(1, {\bf 0}_{l-1}, t_{l+1}^2, \dots, t_p^2) - F({\bf 0}_l, \bt'')^2\\
= &\ c^2 \prod_{j \in J_+} t_j^{2\alpha_j} - F({\bf 0}_l, \bt'')^2,
\end{align*}
which yields the reverse inequality via square roots, and completes  the induction step.

\item Now suppose $\mathcal{N} = 2$. The analysis in part~(1) shows one implication, except for the lack of dependence on the $t_j$ for which $k_j = 1$. Fix such an index $j_0$, and choose nonnegative scalars $t,t'$ and $t_j$ for $j \in [p] \setminus \{ j_0 \}$. We need to show that
\begin{equation}\label{Ewanted1}
0 \leq F(t_1, \dots, t_{j_0-1}, t, t_{j_0+1}, \dots, t_p) = F(t_1, \dots, t_{j_0-1}, t', t_{j_0+1}, \dots, t_p).
\end{equation}
For this, choose $x_1 < x_2$ in $X$ and let $\bx := (x_1,x_2)$. Now given an integer $n$, for all $j \in [p] \setminus \{ j_0 \}$ we set $K_j(x,x') := t_j \mathbf{1}$ for all $x,x' \in X$. For $j = j_0$, define $K_{j_0}[\bx; \bx] := \begin{pmatrix} t & t' \\ t' & t \end{pmatrix}$ and $K_{j_0} \equiv 1$ on the rest of $X \times X$. Thus $K_{j_0} \in \STN_X^{(1)}$, and so $F \circ {\bf K} \in \STN_X^{(2)}$. Computing its determinant yields:
\[
0 \leq \det (F \circ {\bf K}) [ \bx; \bx] = F(t_1, \dots, t_{j_0-1}, t, t_{j_0+1}, \dots, t_p)^2 - F(t_1, \dots, t_{j_0-1}, t', t_{j_0+1}, \dots, t_p)^2.
\]

Similarly, using $K_{j_0}[\bx; \bx] = \begin{pmatrix} t' & t \\ t & t' \end{pmatrix}$ and repeating the above argument yields the reverse inequality. This implies~\eqref{Ewanted1}.

Now one implication in~(2) is completely proved. Conversely, suppose first that $|X| = 2$ and $F$ has the specified properties in (2)(a). Let $X = \{ x < x' \}$; now given kernels $K_j \in \STN_X^{(k_j)}$, we need to verify that $(F \circ {\bf K}) [\bx; \bx]$ is $\TN_{2 \times 2}$, where $\bx = (x,x')$. Clearly, $(F \circ {\bf K} ) [\bx; \bx]$ is $\TN_{1 \times 1}$ since $F \geq 0$. Next, write $K_j[ \bx; \bx ] = \begin{pmatrix} t_j & s_j \\ s_j & t'_j \end{pmatrix}$. Restricting ourselves to those $j$ for which $k_j \geq 2$ (as $F$ does not depend on the other variables), we have $0 \leq s_j \leq \sqrt{t_j t'_j}$. Now the three given properties of $F$ yield three inequalities:
\[
0 \leq F({\bf s}) \leq F \left( (\sqrt{t_j t'_j})_j \right) \leq \sqrt{F(\bt) F(\bt')},
\]
and so $F \circ {\bf K} \in \STN_X^{(2)}$ as claimed.

If instead $|X|>2$ and $F$ is a product of power functions and Heaviside indicators as in (2)(b), then we are done using some easily verified properties of the class $\TN_{2 \times 2}$ and hence $\STN_{X \times X}^{(2)}$: it is preserved under entrywise positive (individual) powers, entrywise products, and positive rescaling.

 \item Now let $3 \leq \mathcal{N} \leq |X|$ and $F$ be nonconstant. From above, $F$ is of the form
\begin{equation}\label{Eindicator1}
F({\bf t}) = c \prod_{j \in J} t_j^{\alpha_j} \prod_{i \not\in J} {\bf 1}_{t_i > 0} 
\end{equation}
with $J\subseteq [p]$ and all $\alpha_j \geq 0$. 
We claim that Heaviside indicators do not occur, i.e.\ $J = [p]$. This is shown via the kernels and arguments around~\eqref{Esqrt2} (with $Y=X$ and $\by =\bx$), since all matrices there were symmetric. Here and below, we work with a fixed $\bx \in \inc{X}{3}$.

We next claim $k_{j_0} \geq \mathcal{N} = 3$ and $\alpha_{j_0} \geq 1$ for each $j_0 \in J$. These are already shown after~\eqref{Esqrt2} (setting $Y = X$ and $\by = \bx$), since all kernels used there were symmetric.

This proves one implication; the converse holds by \cite[Proposition~4.2 and Theorem~5.2]{FJS}.

\item Now say $4 \leq \mathcal{N} \leq \infty$. By the previous parts, $F(\bt) = c \prod_{j\in J} t_{j}^{\alpha_j}$ with $\alpha_{j}\geq 1$. First fix $\bx \in \inc{X}{4}$ and $j_0 \in J$. By \cite[Theorem~3.6(d)]{BGKP-TN}, $\alpha_{j_0} \in \{ 1 \} \sqcup [2,\infty)$ -- e.g., by considering powers of the padded-by-zero matrix $(1 + u_0^{i+j})_{i,j=1}^4 \in \STN_{4 \times 4}$ for any $u_0 \in (0,1)$; and $K_j \equiv {\bf 1}_{X \times X}$ for $j \neq j_0$.

We next prove that $J$ is a singleton (when $F$ is nonconstant). Suppose not; say e.g.\ $1,2 \in J$. Let the kernels $K_{1}$ and $K_{2}$ be the paddings-by-zero to $X \times X$ of the matrices
\[
K_1[\bx; \bx] := \begin{pmatrix}
    2 & 2 & 1 & 1 \\ 2 & 2 & 1 & 1 \\ 1 & 1 & 2 & 2 \\ 1 & 1 & 2 & 2 
\end{pmatrix}, \qquad
K_2[ \bx; \bx] := \begin{pmatrix}
    2 & 1 & 1 & 0 \\ 1 & 2 & 2 & 1 \\ 1 & 2 & 2 & 1 \\ 0 & 1 & 1 & 2 
\end{pmatrix},
\]
in \cite[Example~4.4]{FJS}; note these are in $\STN_{4\times 4}$.
Also let $K_j \equiv {\bf 1}_{X \times X}$ for $j>2$. Then $F\circ {\bf K} \in \STN_X^{(4)}$, so
\[
0\leq \det (F\circ {\bf K})[(x_1,x_2,x_3); (x_2,x_3,x_4)] = -2^{\alpha_2} (4^{\alpha_1} -1) < 0
\]
which is absurd. Hence there exists unique $j_0 \in J$ such that $F(\bt) = c t_{j_0}^{\alpha_{j_0}}$ with $\alpha_{j_0}\in \{1\}\cup [2,\infty)$. This shows one implication when $|X|=4$ -- except that $k_{j_0} \geq \mathcal{N} = 4$, which we show later. The converse assertion, with $|X|=4$, follows from \cite[Proposition~5.6]{FJS}.

\item Now suppose that $5\leq |X| \leq \infty$ (and $4 \leq \mathcal{N} \leq \infty$). By the above analysis (and padding), there exists a unique $j_0 \in J$ such that $F(\bt) = ct_{j_0}^{\alpha_{j_0}}$ with $\alpha_{j_0} \in \{1\}\cup [2,\infty)$. The powers $\alpha_{j_0} \geq 2$ are ruled out using \cite[Example 5.10]{FJS}. Hence $\alpha_{j_0} = 1$, proving one implication except that $k_{j_0} \geq \mathcal{N}$. The converse implication is obvious for $\mathcal{N} = 4$ and $5 \leq \mathcal{N} \leq \infty$.

\item The only missing part is to assume $4 \leq \mathcal{N} \leq \infty$ and show that $k_{j_0}\geq \mathcal{N}$. This is verbatim as in part~(4) of the proof of Theorem~\ref{TmainTN} (with $Y=X$ and $Y'=X'$), since the matrices used there were symmetric.\qedhere
\end{enumerate}
\end{proof}

\section{Transforms of symmetric $\TP$ kernels}

In this section we prove Theorem~\ref{TmainSTP}. This requires a final preliminary result:

\begin{lemma}\label{Lrank1approx}
Suppose there exists a $\TP_2$ kernel on $X \times Y$, for totally ordered sets $X,Y$ with $2 \leq |X|,|Y| \leq \infty$. Given any functions $\varphi : X \to (0,\infty)$ and $\psi : Y \to (0,\infty)$, the ``rank one'' $\TP^{(1)}$ kernel $K(x,y) := \varphi(x) \psi(y)$ is the limit of $\TP$ kernels on $X \times Y$ -- which are moreover symmetric if $Y=X$ and $\psi \equiv \varphi$ (i.e.\ if $K$ is symmetric).
\end{lemma}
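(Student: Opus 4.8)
The plan is to write $K$ itself as a pointwise limit of Hadamard products of $K$ with $\TP$ kernels that converge to the constant kernel $1$. First I would invoke Lemma~\ref{Ltptosets}: since a $\TP_2$ kernel exists on $X\times Y$ and $2\le |X|,|Y|\le\infty$, neither $X$ nor $Y$ is a singleton, so there are order-preserving injections $\iota_X:X\hookrightarrow(0,\infty)$ and $\iota_Y:Y\hookrightarrow(0,\infty)$; when $Y=X$ we take $\iota_X=\iota_Y$. The key elementary fact is that multiplying a $\TP$ kernel by a positive rank one kernel preserves total positivity of every order: if $G:X\times Y\to(0,\infty)$ is $\TP$ and $\varphi:X\to(0,\infty)$, $\psi:Y\to(0,\infty)$ are arbitrary, then for all $\bx\in\inc{X}{r}$, $\by\in\inc{Y}{r}$,
\[
\det\big(\varphi(x_i)\psi(y_j)\,G(x_i,y_j)\big)_{i,j=1}^r = \Big(\prod_{i=1}^r\varphi(x_i)\Big)\Big(\prod_{j=1}^r\psi(y_j)\Big)\det G[\bx;\by] > 0,
\]
and this operation clearly preserves symmetry when $Y=X$, $\psi\equiv\varphi$, and $G$ is symmetric.

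With this in hand I would take, for $n\ge 1$,
\[
K_n(x,y) := \varphi(x)\,\psi(y)\,\exp\!\big(\tfrac{1}{n}\,\iota_X(x)\,\iota_Y(y)\big), \qquad (x,y)\in X\times Y.
\]
The factor $(x,y)\mapsto\exp(\tfrac{1}{n}\iota_X(x)\iota_Y(y))$ is a $\TP$ kernel on $X\times Y$: it is the restriction, along the order-preserving injections $\tfrac{1}{n}\iota_X$ and $\iota_Y$, of the exponential (generalized Vandermonde) kernel $e^{uv}$ on $\mathbb{R}\times\mathbb{R}$, which is $\TP$ -- exactly the fact used in the proof of Theorem~\ref{TP-reduction}. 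In the symmetric case one writes $\tfrac{1}{n}\iota_X(x)\iota_X(x')=\iota_n(x)\iota_n(x')$ with $\iota_n:=\tfrac{1}{\sqrt{n}}\iota_X$, so this factor is a symmetric $\TP$ kernel. Hence, by the previous paragraph, each $K_n$ is $\TP$ on $X\times Y$ (symmetric when $K$ is), and since $\exp(\tfrac{1}{n}\iota_X(x)\iota_Y(y))\to 1$ as $n\to\infty$ for each fixed $(x,y)$, we obtain $K_n(x,y)\to\varphi(x)\psi(y)=K(x,y)$ pointwise, which is the desired conclusion.

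I do not anticipate a real obstacle here: the only two points requiring care are the row/column rescaling identity displayed above (routine) and the fact that the exponential kernel restricts to a $\TP$ kernel on $X\times Y$, which is the generalized-Vandermonde statement already invoked earlier in the paper. The hypotheses $2\le|X|,|Y|$ and the existence of a $\TP_2$ kernel enter only through Lemma~\ref{Ltptosets}, namely to produce the order-preserving injections $\iota_X,\iota_Y$ into $(0,\infty)$ that define the perturbing kernels.
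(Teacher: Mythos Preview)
Your proposal is correct and follows essentially the same approach as the paper: both identify $X,Y$ with subsets of $(0,\infty)$ via Lemma~\ref{Ltptosets}, multiply the rank-one kernel by the $\TP$ perturbation $e^{xy/n}$ (equivalently $e^{\iota_X(x)\iota_Y(y)/n}$), and verify total positivity via the diagonal-times-Vandermonde factorization you display. The paper is slightly terser in that it treats the identification as literal and writes $e^{xy/n}$ directly, but the content is identical.
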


\begin{proof}
By Lemma~\ref{Ltptosets}, $X,Y$ can be identified with subsets of $(0,\infty)$.
Now define $K^{(n)}(x,y) := e^{xy/n} K(x,y)$; these kernels (are symmetric if $Y=X$ and $\psi \equiv \varphi$, and) converge pointwise to $K$. The $K^{(n)}$ are also $\TP$ because given tuples $\bx \in \inc{X}{k}$ and $\by \in \inc{Y}{k}$, Vandermonde theory yields
\[
\det K^{(n)}[\bx; \by] = \det \left( {\rm diag}(\varphi(x_i))_{i=1}^k \cdot (e^{x_i y_j / n})_{i,j=1}^k \cdot {\rm diag}(\psi(y_j))_{j=1}^k \right) > 0. \qedhere
\]
\end{proof}

With Lemma~\ref{Lrank1approx} at hand, we now show:

\begin{proof}[Proof of Theorem~\ref{TmainSTP}]
The $\mathcal{N}=1$ case is obvious.
\begin{enumerate}
\item Before working out the $\mathcal{N}=2$ case, we first develop some results that will apply in all parts below. Thus, henceforth we assume $2 \leq \mathcal{N} \leq \infty$. First suppose $F[-]$ satisfies~\eqref{ESTPq}. Since $|X| \geq 2$, by the ``symmetric'' part of Lemma~\ref{Ltptosets} we may identify $X$ with a subset of $(0,\infty)$; and by \cite[Theorem~4.10]{BGKP-TN} we may realize every $\STP_{2 \times 2}$ matrix as a principal submatrix of a symmetric $\TP$ kernel on $X \times X$. Thus, every preserver $F[-]$ sends $(\STP_{2 \times 2})^p$ to $\STP_{2 \times 2}$. Applying Proposition~\ref{Pmidconvex} with minor modifications to the non-strict inequalities deduced, we get that $F$ is positive, jointly increasing, multiplicatively mid-convex, and hence continuous on $(0,\infty)^p$.

Next, suppose moreover that $3 \leq |X| \leq \infty$, and fix points $x_1 < x_2 < x_3$ in $X \subseteq (0,\infty)$. We show that $F$ is a mixed power function (up to rescaling). We first fix $t_2, \dots, t_p > 0$, and let $t, t' \in (0,\infty)$. Now set the totally positive kernels
\[
K_j^{(n)}(x,x') := t_j e^{x'x/n}, \qquad \forall j > 1, \ n \geq 1,
\]
and define the following rank-one matrices in $\STN_{3 \times 3}$, found in \cite[Equation (3.8)]{BGKP-TN}:
\begin{equation}\label{Etp2}
A'( u, v ) := \begin{pmatrix}
u^2 & u & u v \\
u & 1 & v \\
u v & v & v^2
\end{pmatrix} \quad \text{and} \quad %
B'(u,v) := \begin{pmatrix}
u^2 v & u v & u \\
u v & v & 1 \\
u & 1 & 1 / v
\end{pmatrix} \qquad u, v > 0.
\end{equation}

These matrices were used in understanding \textit{univariate preservers} of symmetric $\TP$ \textit{matrices}; we now use them to work out the threefold generalization: multivariate transforms of symmetric $\TP$ kernels. To do this, choose any positive function $\varphi$ that takes the values $t,1,t'$ at $x_1, x_2, x_3$ respectively, and $1$ elsewhere. Then the kernel $K_1(x,x') := \varphi(x) \varphi(x')$ equals $A(t,t')$ when evaluated at $[\bx; \bx]$ where $\bx = (x_1, x_2, x_3)$; moreover, by Lemma~\ref{Lrank1approx} $K_1$ is approximated by a sequence $K_1^{(n)} \in \STP_{X \times X}$. Hence $F \circ {\bf K}^{(n)} \in \STP_X^{(2)}$, since $\mathcal{N} \geq 2$. Evaluate the minor corresponding to $(x_1, x_2); (x_2, x_3)$ and take limits to obtain:
\[
0 \leq \det \lim_{n \to \infty} (F \circ {\bf K}^{(n)}) [ (x_1, x_2); (x_2, x_3) ] = g_1(t) g_1(t') - g_1(tt') g_1(1),
\]
where $g_1(t) := F(t, t_2, \dots, t_p)$.

A similar procedure as above, using $B'(u,v)$ in place of $A'$ shows that $g_1(tt') g_1(1) \geq g_1(t) g_1(t')$. It follows that the continuous map $g_1(\cdot)/g_1(1)$ is multiplicative on $(0,\infty)$. Thus $g_1(t) = g_1(1) t^{\alpha_1}$, with $\alpha_1 = \alpha_1(t_2, \dots, t_p)$ some function.
Similarly, each $g_{j}(t) = c_j t^{\alpha_j}$ for some functions $c_j = c_j(t_1, \dots, \widehat{t_j},\dots, t_p), \alpha_j = \alpha_j(t_1, \dots, \widehat{t_j},\dots, t_p) > 0$. Now Lemma~\ref{Lclaim} implies $F$ is a mixed power function as claimed.

\item Now suppose $\mathcal{N} = 2$. The analysis in part~(1) shows one implication, except for the lack of dependence on the $t_j$ for which $k_j = 1$. Fix such an index $j_0$, choose positive scalars $t,t'$ and $t_j>0$ for $j \in [p] \setminus \{ j_0 \}$, and proceed akin to the argument following~\eqref{Ewanted1}: choose $x_1 < x_2$ in $X \subseteq (0,\infty)$ and let $\bx := (x_1,x_2)$. Now given an integer $n$, for all $j \in [p] \setminus \{ j_0 \}$ we set $K_j^{(n)}(x,x') := t_j e^{x'x/n}$ for all $x,x' \in X$. This is a generalized Vandermonde kernel, hence in $\STP_X^{(\infty)} \subseteq \STP_X^{(k_j)}$. For $j = j_0$, define $K_{j_0} = K_{j_0}^{(n)}[\bx; \bx]$ as after~\eqref{Ewanted1}; then $K_{j_0}^{(n)} \in \STP_X^{(1)}$, and so $F \circ {\bf K}^{(n)} \in \STP_X^{(2)}$. Computing its (positive) determinant and taking limits (using that $F$ is continuous) yields:
\[
0 \leq \lim_{n \to \infty} \det (F \circ {\bf K}^{(n)}) [ \bx; \bx] = F(t_1, \dots, t_{j_0-1}, t, t_{j_0+1}, \dots, t_p)^2 - F(t_1, \dots, t_{j_0-1}, t', t_{j_0+1}, \dots, t_p)^2.
\]

As after~\eqref{Ewanted1}, using $K_{j_0}^{(n)}[\bx; \bx] = \begin{pmatrix} t' & t \\ t & t' \end{pmatrix}$ yields the reverse inequality, and hence~\eqref{Ewanted1}.

Now one implication in~(2) is fully proved, and the converse is similar to the corresponding arguments in proving Theorem~\ref{TmainSTN} -- with the obvious minor modifications, e.g.:
\[
0 < F({\bf s}) < F \left( (\sqrt{t_j t'_j})_j \right) \leq \sqrt{F(\bt) F(\bt')}.
\]

\item Next, first assume that $\mathcal{N} \geq 3$. As always, $X \subseteq (0,\infty)$. Now fix $u_0 \in (0,1)$, and appeal to the Hankel kernel $K_H(x,x') := 1 + u_0^{x+x'}$, on $(0,\infty)^2$ at first. This kernel is $\TN$ of rank~2:
\[
K_H \equiv {\bf 1}_{(0,\infty)} {\bf 1}_X^T + u_0^{(0,\infty)} (u_0^{(0,\infty)})^T,
\]
and one verifies that all $2 \times 2$ minors are nonnegative. Moreover, as explained in \cite[Proposition~7.4]{BGKP-TN}, $K_H$ is a limit of Hankel kernels $K_H^{(\epsilon)} \in \STP_{(0,\infty)}$ as $\epsilon \to 0^+$.

Continuing with the proof -- still for $3 \leq \mathcal{N} \leq \infty$ -- by part~(1) we have $F(\bt) = c \bt^{\boldsymbol \alpha}$. Fix $j_0 \in [p]$ such that $\alpha_{j_0} > 0$; we claim that $\alpha_{j_0} \not\in (0,\mathcal{N}-2) \setminus \mathbb{Z}$. For this, we use a sequence of kernel-tuples: set $K_{j_0}^{(n)} \equiv K_H^{(1/n)}$ (as above) where $n$ will grow, and also set for $j \neq j_0$:
\[
K_j(x,x') = K_j^{(n)}(x,x') := e^{x'x/n}, \qquad x,x' \in X, \ n \geq 1.
\]
Then $K_j^{(n)}|_{X \times X} \in \STP_X$ for all $j \in [p]$ and $n \geq 1$; moreover,
$K_{j_0}^{(n)} \to K_H$ and $K_j^{(n)} \to {\bf 1}$ on $X \times X$ for $j \neq j_0$,
as $n \to \infty$. Now recall (see e.g.\ the discussion following \cite[Definition~3.1]{BGKP-TN}) that $X$ has either an increasing or decreasing infinite subset. Thus, for each finite integer $1 \leq r \leq \mathcal{N}$ one can choose a tuple $\bx_r = (x_1 < \cdots < x_r)$ in $\inc{X}{r}$. Now
\begin{equation}\label{Ebookmark}
c \left( (1 + u_0^{x_j+x_k})^{\alpha_{j_0}} \right)_{j,k=1}^r = K_H^{\alpha_{j_0}}[ \bx_r; \bx_r ] = \lim_{n \to \infty} (F \circ {\bf K}^{(n)} ) [\bx; \bx] \in \TN_{r \times r},
\end{equation}
so by \cite[Corollary~3.2]{AK}, $\alpha_{j_0} \not\in (0,\mathcal{N}-2) \setminus \mathbb{Z}$. In particular, if $\mathcal{N} = \infty$ then $\alpha_{j_0} \in \mathbb{Z}_{\geq 0}$.

Finally, set $\mathcal{N} = 3$. From above we have $F(\bt) = c \prod_{j \in J} t_j^{\alpha_j}$ for some nonempty subset $J$, with all such $\alpha_j \geq 1$. It remains to show that $k_{j_0} \geq 3$ for each $j_0 \in J$. This is similar to part~(3) of the proof of Theorem~\ref{TmainTP} (setting $Y=X$), since the Jain--Karlin--Schoenberg kernel is symmetric. Namely, suppose $k_{j_0} \leq 2$; now fix $0 < \epsilon < 1$ and set
\[
K_{j_0}(x,x') = K_{j_0}^{(n)}(x,x') := (1 + xx')^{\alpha_{j_0}^{-1} \epsilon}
\quad \text{and} \quad
K_j^{(n)}(x,x') := e^{x'x/n},
\]
for all $x,x' \in X \subseteq (0,\infty)$, $n \geq 1$, and $j \neq j_0$.
Then $K_j^{(n)} \in \STP_X$ for all $j \neq j_0$, while $K_{j_0}^{(n)} \in \STP_X^{(2)}$ by \cite[Theorem~C]{AK} (or directly, being closed under positive entrywise powers). Thus $F \circ {\bf K}^{(n)} \in \STP_X^{(3)}$, and taking limits we obtain that $(x,x') \mapsto (1+xx')^\epsilon$ is $\TP^{(3)}$ when evaluated at $[\bx; \bx]$ for some $\bx \in \inc{X}{3}$. But this is false by \cite[Theorem~C]{AK} since $\epsilon \in (0,1)$.

This shows one implication; the converse holds by \cite[Proposition~4.2 and Theorem~5.2]{FJS}.

\item Now say $\mathcal{N}=4$. By the previous parts, $F(\bt) = c \prod_{j \in J} t_j^{\alpha_j}$ with $\alpha_j \in \{ 1 \} \sqcup [2,\infty)$. Thus $F$ admits a continuous extension to $[0,\infty)^p$. Moreover, the extra hypothesis yields that every $n \times n$ matrix in $\STN$ (with $n \leq |X|$) can be approximated by $\STP$ matrices via Whitney density if $X$ is finite, else its ``inflation'' to $X \times X$ can be approximated by $\STP$ kernels using Proposition~\ref{Pinflate} and Theorem~\ref{Twhitney2}. As these test kernels were the only ones used to deduce in the proof of Theorem~\ref{TmainSTN} that
(a)~$F$ is an individual power function $c t_{j_0}^{\alpha_{j_0}}$, and moreover that
(b)~if $|X|>4$ then $\alpha_{j_0} = 1$,
hence the same conclusions now follow in the $\STP$ case. Finally, we defer the proof of $k_{j_0} \geq \mathcal{N}$ to the next case below.

The converse is immediate if $|X|>4$, and follows from \cite[Proposition~5.6]{FJS} if $|X|=4$.

\item Finally, $5 \leq \mathcal{N} \leq \infty$. Again by the previous parts, $F(\bt) = c \prod_{j \in J} t_j^{\alpha_j}$ with all $\alpha_j \in \mathbb{Z}_{>0} \cup (\mathcal{N}-2,\infty)$. First if $\mathcal{N} = \infty$ then by the line after Equation~\eqref{Ebookmark}, all $\alpha_j \in \mathbb{Z}_{>0}$. Now apply Lemma~\ref{LregularTN} with $q=p$ and all $\Lambda_j$ equal to the kernel $M_\gamma$ in~\eqref{EevenPF}. Then $M_\gamma^{\sum_{j \in J} \alpha_j}$ is $\TN$. But by the discussion following~\eqref{EevenPF}, this happens only if $\sum_j \alpha_j = 1$. This completes the $\mathcal{N}=\infty$ case (the converse being obvious), except to show that each $k_j = \infty$; but this latter is shown in the proof of Theorem~\ref{TmainTP}(4) (upon setting $Y=X$), since the kernels used there were symmetric.

Else $5 \leq \mathcal{N} < \infty$. Now using the extra hypothesis, we argue as in the previous part to deduce that $F(\bt)$ is an individual power function. Moreover, the $\STN$ matrices in \cite[Example~5.10]{FJS} help rule out all powers in $(1,\infty)$. This yields the result (with the converse again obvious) except that each $k_j \geq \mathcal{N}$ -- which was also not shown in part~(4) above.

We now prove both cases together; thus, now $4 \leq \mathcal{N} < \infty$. Suppose $1 \leq k_{j_0} < \mathcal{N}$; then the arguments in the final paragraph of the proof of Theorem~\ref{TmainTP}(4) go through verbatim (with $Y=X$), since the Jain--Karlin--Schoenberg kernel is symmetric.
\qedhere
\end{enumerate}
\end{proof}

\subsection*{Acknowledgments}
This work was partially supported by SwarnaJayanti Fellowship grants SB/SJF/2019-20/14 and DST/SJF/MS/2019/3 from SERB and DST (Govt.~of India) and by a Shanti Swarup Bhatnagar Award from CSIR (Govt.\ of India). Part of this work was carried out when both authors were visiting the International Centre for Mathematical Sciences (Edinburgh), and we thank them for their excellent hospitality and working conditions.

\appendix
 \section{Discontinuities of jointly monotone functions}

 In this section, we discuss the Lebesgue measure of the set of discontinuities of a jointly monotone function on the Euclidean space $\mathbb{R}^{p}.$ We restrict our attention to the case of $(0,\infty)^{p}$ but the techniques can be used to prove the result verbatim on any subset of $\mathbb{R}^{p}.$ Let `$<$' denote the (strict) componentwise or product order on $\mathbb{R}^{p}$, that is
 $\bx < \by$ if $x_{j} < y_{j}$, for all $j\in [p].$
 
 We start by proving that a jointly non-decreasing function (recall Definition~\ref{Dstrict}) is Lebesgue (need not be Borel) measurable. The following proof from \cite{NE} is for a slightly different notion of jointly monotone maps, and it goes through verbatim in our setting. We include it here for completeness.
\begin{proposition}\label{Lebmeas}
    With the above assumptions, $f$ is Lebesgue measurable.
\end{proposition}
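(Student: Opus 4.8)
The plan is to prove measurability by showing that every superlevel set of $f$ is Lebesgue measurable. Fix $c\in\mathbb{R}$ and set $A_c:=\{\bx\in(0,\infty)^p: f(\bx)>c\}$; it suffices to show each $A_c$ is Lebesgue measurable. The starting point is that joint non-decreasingness makes $A_c$ an \emph{up-set} for the strict componentwise order: if $\bx\in A_c$ and $\bx<\by$ (i.e.\ $x_j<y_j$ for all $j$), then $f(\by)\geq f(\bx)>c$, so $\by\in A_c$. Equivalently, $A_c+\mathbf{v}\subseteq A_c$ for every $\mathbf{v}\in(0,\infty)^p$.

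Next I would upgrade this to a statement about the topological boundary: for every $\mathbf{v}$ with all coordinates positive, $\overline{A_c}+\mathbf{v}\subseteq\operatorname{int}(A_c)$. Indeed, given $\bx\in\overline{A_c}$, choose $\bx_n\in A_c$ with $\bx_n\to\bx$; for large $n$ one has $\bx_n<\bx+\tfrac12\mathbf{v}$ coordinatewise, so $\bx+\tfrac12\mathbf{v}\in A_c$ by the up-set property, and then $\bx+\mathbf{v}+\mathbf{w}=(\bx+\tfrac12\mathbf{v})+(\tfrac12\mathbf{v}+\mathbf{w})\in A_c$ for all sufficiently small $\mathbf{w}$, whence $\bx+\mathbf{v}\in\operatorname{int}(A_c)$. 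Consequently no two points of $\partial A_c=\overline{A_c}\setminus\operatorname{int}(A_c)$ can satisfy $\bx<\by$ strictly in all coordinates; that is, $\partial A_c$ is an \emph{antichain} for the strict componentwise order.

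The main step, and the one I expect to require the most care, is to show that a (Lebesgue) measurable antichain $S$ for this order has Lebesgue measure zero. Note $\partial A_c=\overline{A_c}\cap\overline{(A_c)^c}$ is closed, hence measurable, so this applies. I would argue by contradiction using the Lebesgue density theorem: if $|S|>0$, pick a density point $\mathbf{p}\in S$ of $S$. The open orthants $O^{+}:=\{\bx:x_j>p_j\ \forall j\}$ and $O^{-}:=\{\bx:x_j<p_j\ \forall j\}$ are disjoint, and each of $O^{\pm}$ is disjoint from $S$ (a point of $O^{\pm}\cap S$ would be strictly comparable to $\mathbf{p}\in S$). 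Since $O^{\pm}\cap B(\mathbf{p},r)$ occupies exactly a $2^{-p}$ fraction of the volume of $B(\mathbf{p},r)$ for every $r>0$, we get $|S\cap B(\mathbf{p},r)|\leq(1-2^{1-p})\,|B(\mathbf{p},r)|$ for all $r$, contradicting that the density of $S$ at $\mathbf{p}$ is $1$. Hence $|\partial A_c|=0$.

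Finally, writing $A_c=\operatorname{int}(A_c)\sqcup(A_c\cap\partial A_c)$ expresses $A_c$ as the union of an open set and a subset of a Lebesgue-null set, so $A_c$ is Lebesgue measurable (the part of $\partial A_c$ lying on the coordinate hyperplanes bounding $(0,\infty)^p$ is also null and causes no issue). Since $c\in\mathbb{R}$ was arbitrary, $f$ is Lebesgue measurable. I would also remark that this argument gives only Lebesgue, not Borel, measurability, since the set $A_c\cap\partial A_c$ need not be Borel.
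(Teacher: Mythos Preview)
Your argument is correct and takes a genuinely different route from the paper's proof. The paper proceeds by induction on $p$: for $f:\mathbb{R}^{p+1}\to\mathbb{R}$ jointly non-decreasing and $a\in\mathbb{R}$, it defines $g(x):=\inf\{t:f(x,t)\geq a\}$, observes that $g$ is jointly monotone in $p$ variables (hence measurable by induction), and then uses Fubini to show the graph of $g$ is null, whence $\{f\geq a\}$ differs from the measurable epigraph $\{(x,t):t>g(x)\}$ by a null set. Your approach is instead direct and geometric: you exploit that each superlevel set $A_c$ is an up-set for the strict product order, deduce that $\partial A_c$ is an antichain, and kill antichains via the Lebesgue density theorem. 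The inductive proof is arguably more elementary in that it avoids the density theorem, relying only on Fubini; on the other hand, your argument yields the extra structural information that $\partial A_c$ is Lebesgue-null for every $c$ (which is essentially the content of the paper's \emph{next} result, Theorem~\ref{Discts}, on the discontinuity set), and it works in one shot without induction. Both proofs make clear why one obtains only Lebesgue and not Borel measurability.
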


\begin{proof}
   We use induction on $p$.  For $p=1$ this is trivial. For the induction step, suppose $f : \mathbb{R}^{p+1} \to \mathbb{R}$ is jointly non-decreasing (as in Definition~\ref{Dstrict}), fix $a \in \mathbb{R}$, and define $g : \mathbb{R}^p \to \mathbb{R}$ by $g(x) := \inf\{t \in \mathbb{R} : f(x,t) \ge a\}$.  Then $g$ is monotonically non-decreasing.  By the induction hypothesis, $g$ is Lebesgue measurable. Hence so is the epigraph $E = \{(x,t) : t > g(x)\}$, and the graph $G = \{(x,t) : g(x) = t\}$ is of zero Lebesgue measure (Fubini's theorem).  However, $E\, \triangle\, \{f \ge a\} \subset G$ and so $\{f \ge a\}$ is Lebesgue measurable.  
\end{proof}
We next show that the set of discontinuities of a jointly monotone function has Lebesgue measure zero. A similar result was shown in \cite{BL-mntcts}, again for a somewhat different definition of joint monotonicity. The proof of that can be adapted to the present situation; we now provide an alternate and self-contained argument, to show:

 \begin{theorem}\label{Discts}
     Let $f: (0,\infty)^{p}\to \mathbb{R}$ be a jointly non-decreasing function. Then the set of discontinuities of $f$, say $D_{f},$ is of zero Lebesgue measure.
 \end{theorem}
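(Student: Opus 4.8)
The plan is to reduce the continuity of $f$ at a point to the continuity of a non-decreasing function of one real variable along the main diagonal, and then to conclude via a Fubini/Tonelli argument. Throughout write $\mathbf{e} := (1,\dots,1) \in \mathbb{R}^p$.

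\textbf{Step 1: a cube sandwich.} The key point is that although joint monotonicity only compares $f$ at points all of whose coordinates strictly increase, it already controls the variation of $f$ on an entire cube. Fix $\bx \in (0,\infty)^p$ and $\delta > 0$ with $\bx - \delta\mathbf{e} \in (0,\infty)^p$. If $\by \in (0,\infty)^p$ lies in the open cube of side $2\delta$ centered at $\bx$ (i.e.\ $\max_j |y_j - x_j| < \delta$), then $x_j - \delta < y_j < x_j + \delta$ for every $j$, so $\bx - \delta\mathbf{e}$, $\bx$, $\by$, and $\bx + \delta\mathbf{e}$ are comparable in the strict product order in the evident way, and joint monotonicity yields
\[
f(\bx - \delta\mathbf{e}) \leq \min\{ f(\by), f(\bx) \} \leq \max\{ f(\by), f(\bx) \} \leq f(\bx + \delta\mathbf{e}).
\]
Hence $|f(\by) - f(\bx)| \leq f(\bx + \delta\mathbf{e}) - f(\bx - \delta\mathbf{e})$ for all such $\by$; since the right-hand side is nonnegative and non-increasing in $\delta$, it follows that $f$ is continuous at $\bx$ if and only if $f(\bx + \delta\mathbf{e}) - f(\bx - \delta\mathbf{e}) \to 0$ as $\delta \to 0^+$. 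Equivalently: for each line $L$ in the direction $\mathbf{e}$, the set $D_f \cap L$ is exactly the discontinuity set of the restriction $f|_L$, viewed as a non-decreasing function of one real variable.

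\textbf{Step 2: diagonal slices are null.} A non-decreasing function of one real variable (defined on an interval) has at most countably many discontinuities. By Step~1, the slice $D_f \cap L$ is therefore countable for every diagonal line $L$, in particular of one-dimensional Lebesgue measure zero in $L$.

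\textbf{Step 3: measurability and Fubini.} We first observe that $D_f$ is measurable. Indeed, by Step~1 one has $D_f = \{\phi > 0\}$, where $\phi(\bx) := \lim_{n\to\infty}\big( f(\bx + n^{-1}\mathbf{e}) - f(\bx - n^{-1}\mathbf{e}) \big)$ is a pointwise limit of translates of the Lebesgue-measurable function $f$ (Proposition~\ref{Lebmeas}); alternatively, the oscillation $\omega_f$ is upper semicontinuous, so each $\{\omega_f \geq 1/n\}$ is relatively closed and $D_f = \bigcup_{n\geq 1}\{\omega_f \geq 1/n\}$ is an $F_\sigma$ (hence Borel) set. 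Now apply the volume-preserving linear change of variables $\Phi(s, v_2, \dots, v_p) := (s,\ s+v_2,\ \dots,\ s+v_p)$, which has determinant $1$ and carries each line parallel to the $s$-axis onto a diagonal line. By Step~2 every $s$-section of $\Phi^{-1}(D_f)$ is one-dimensionally null, so Tonelli's theorem gives $|\Phi^{-1}(D_f)| = 0$, whence $|D_f| = 0$.

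\textbf{Expected obstacle.} The only substantive point is Step~1: one has to notice that the comparatively weak notion of joint monotonicity in Definition~\ref{Dstrict} still delivers the cube sandwich, which collapses the $p$-variable continuity problem onto the classical one-variable statement. After that, Steps~2 and~3 are routine applications of standard facts (countability of discontinuities of monotone functions, measurability, and Fubini–Tonelli).
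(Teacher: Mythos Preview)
Your proof is correct and follows essentially the same strategy as the paper: reduce continuity of $f$ at $\bx$ to continuity of the one-variable restriction along the diagonal line through $\bx$ (your cube sandwich is exactly the paper's argument that $D_f \cap L \subseteq D_{\varphi}$), then slice by a linear change of variables and apply Fubini with the one-dimensional base case. Your presentation is in fact a bit cleaner: you use the shear $\Phi(s,v_2,\dots,v_p)=(s,s+v_2,\dots,s+v_p)$ with determinant $1$, whereas the paper uses a more elaborate matrix with determinant $2^{p-1}$ and frames the argument as an induction that really only invokes the $p=1$ case.
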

 \begin{proof}
     Let $\lambda_{n}$ denote the $n$-dimensional Lebesgue measure on $\mathbb{R}^{n}$. We proceed by induction on $p \geq 1$: The result for $p=1$ immediately follows from the standard fact that a monotone function has at most countably many discontinuities. For $p=2$: We know that $D_f$ is measurable (being an $F_{\sigma}$ set), and via a change of variables by the linear operator $\begin{pmatrix}
         1 & -1 \\ 1 & 1
     \end{pmatrix}$ we get by Fubini's theorem,
\[
\lambda_{2}(D_f) =\int_{-\infty}^{+\infty}\int_{-\infty}^{+\infty} 1_{D_f}(x,y) \ \mathrm{d}x\mathrm{d}y \\
=2\int_{-\infty}^{+\infty}\left[\int_{-\infty}^{+\infty} 1_{D_f}(s-t,s+t) \ \mathrm{d}s\right]\mathrm{d}t .\\
\]

Fix $t$ and let $\varphi_t : \mathbb{R} \to \mathbb{R}$ be the function mapping $s$ to $f(s-t,s+t)$. We claim that if $\varphi_t$ is continuous at $s$, then $f$ is continuous at $(s-t,s+t)$. Indeed, since $f$ is strictly increasing we have
\[
f(s-t-\delta,s+t-\delta) \leq f(x,y) \leq f(s-t+\delta,s+t+\delta)
\]
for all $(x,y)$ in the open square $(s-t-\delta,s-t+\delta)\times(s+t-\delta,s+t+\delta)$. In the above notation,
\[
\varphi_t(s - \delta) \leq f(x,y) \leq \varphi_t( s + \delta), 
\]
so taking the limit as $\delta \to 0^{+}$ proves the claim.

In other words, a discontinuity of $f$ is a discontinuity of $\varphi_t$. Therefore, for each $t$, we have $D_{f}\cap l \subseteq D_{\varphi_{t}}$, where $l= \{(s-t,s+t)^{T} \in \mathbb{R}^{2} : s \in \mathbb{R} \}$ is a line. But $\varphi_t$ is non-decreasing, so by the base case $\lambda_{1}(D_{\varphi_t}) = 0$. Therefore, we have the following inequality,
\[
\lambda_{2}(D_{f}) = 2\int_{-\infty}^{+\infty}\left[\int_{-\infty}^{+\infty} 1_{D_f}(s-t,s+t) \ \mathrm{d}s\right]\mathrm{d}t \leq 2 \int_{-\infty}^{+\infty}\left[\int_{-\infty}^{+\infty} 1_{D_{\varphi_t}}(s) \ \mathrm{d}s\right]\mathrm{d}t.
\]
Since the inner integral on the RHS is $0$ for all $t$, Fubini's theorem implies that $\lambda_{2}(D_{f})=0$.

For the induction step, a similar argument applies by taking the matrix $A = (a_{ij})_{i,j=1}^{p}$ as a change of variable matrix, where $a_{ij} = -1$ if $i+j=p+1, 1\leq i<p$ and $1$ otherwise. This yields
\[
\lambda_{n}(D_f) =\int_{\mathbb{R}^{p}} 1_{D_f}(x_{1},\ldots, x_{p}) \ \mathrm{d}x_{1}\mathrm{d}x_{2}\cdots \mathrm{d}x_{p} \\
= 2^{p-1} \int_{\mathbb{R}^{p}} 1_{D_f}(A{\bf s}) \ \mathrm{d}s_{1}\mathrm{d}s_{2}\cdots \mathrm{d}s_{p},
\]
where $A {\bf s} = {\bf x}$.
Again by fixing $(s_{2},\ldots, s_{p}) \in \mathbb{R}^{p-1},$ we define $\varphi_{(s_{2},\ldots, s_{p})} = \varphi: \mathbb{R} \to \mathbb{R}$ which sends $s_{1}$ to $f(A{\bf s})$. Now using the fact that $f$ is monotonically increasing, we get that
\[
\varphi(s_{1} - \delta) \leq f(\bx) \leq \varphi(s_{1} + \delta)
\]
for all $\bx$ in the open cube $(\langle r_{1}, {\bf s} \rangle - \delta, \langle r_{1}, {\bf s} \rangle - \delta) \times(\langle r_{2}, {\bf s} \rangle -\delta, \langle r_{2}, {\bf s} \rangle + \delta) \times \cdots \times(\langle r_{p}, {\bf s} \rangle - \delta, \langle r_{p}, {\bf s} \rangle + \delta)$, where $\langle r_{j}, {\bf s} \rangle$ denotes the Euclidean inner product of the $j^{th}$ row vector of $A$ with ${\bf s}$.
Therefore $D_{f} \cap l \subseteq D_{\varphi}$, where $l= \{A{\bf s} \in \mathbb{R}^{p} : s_{1} \in \mathbb{R} \}$ is a line in $\mathbb{R}^{p}$. By Fubini's theorem,
\[
\lambda_{p}(D_{f}) \leq 2^{p-1}  \int_{\mathbb{R}^{p-1}} \bigg[\int_{\mathbb{R}}1_{D_\varphi}(s_{1}) \ \mathrm{d}s_{1}\bigg] \mathrm{d}s_{2}\cdots \mathrm{d}s_{p},
\]
where the factor $2^{p-1} = \det(A)$. By the induction hypothesis, the inner integral vanishes for all choices of $(s_{2},\ldots, s_{p})$, and hence $\lambda_{p}(D_{f}) = 0$.
\end{proof}

\end{document}